\newcommand{\kom}[1]{}
\renewcommand{\kom}[1]{{\bf [#1]}}
\def\vint_#1{\mathchoice%
          {\mathop{\kern 0.2em\vrule width 0.6em height 0.69678ex depth -0.58065ex
                  \kern -0.8em \intop}\nolimits_{\kern -0.4em#1}}%
          {\mathop{\kern 0.1em\vrule width 0.5em height 0.69678ex depth -0.60387ex
                  \kern -0.6em \intop}\nolimits_{#1}}%
          {\mathop{\kern 0.1em\vrule width 0.5em height 0.69678ex
              depth -0.60387ex
                  \kern -0.6em \intop}\nolimits_{#1}}%
          {\mathop{\kern 0.1em\vrule width 0.5em height 0.69678ex depth -0.60387ex
                  \kern -0.6em \intop}\nolimits_{#1}}}
                  \newcommand{\aveint}[2]{\mathchoice%
          {\mathop{\kern 0.2em\vrule width 0.6em height 0.69678ex depth -0.58065ex
                  \kern -0.8em \intop}\nolimits_{\kern -0.45em#1}^{#2}}%
          {\mathop{\kern 0.1em\vrule width 0.5em height 0.69678ex depth -0.60387ex
                  \kern -0.6em \intop}\nolimits_{#1}^{#2}}%
          {\mathop{\kern 0.1em\vrule width 0.5em height 0.69678ex depth -0.60387ex
                  \kern -0.6em \intop}\nolimits_{#1}^{#2}}%
          {\mathop{\kern 0.1em\vrule width 0.5em height 0.69678ex depth -0.60387ex
                  \kern -0.6em \intop}\nolimits_{#1}^{#2}}}
\newcommand{\R}{\mathbb{R}}
\DeclareMathOperator{\Tr}{Tr}
\def\1{\raisebox{2pt}{\rm{$\chi$}}}
\newcommand{\dist}{\operatorname{dist}}
\newcommand{\kint}{\vint}
\newcommand{\eps}{\varepsilon}
\theoremstyle{plain}
\newtheorem{definition}{Definition}[section]
\newtheorem{proposition}[definition]{Proposition}
\newtheorem{theorem}[definition]{Theorem}
\newtheorem*{theorem*}{Main Theorem}
\newtheorem{corollary}[definition]{Corollary}
\newtheorem{lemma}[definition]{Lemma}
\newtheorem{remark}[definition]{Remark}
\theoremstyle{definition}
\theoremstyle{remark}
\numberwithin{equation}{section}
\begin{document}

\title[]{Tug-of-war games related to oblique derivative boundary value problems with the normalized $p$-Laplacian}

\author{Jeongmin Han}
\address{Department of Mathematics and Statistics, University of Jyv\"{a}skyl\"{a}, 
P.O. Box 35, FI-40014 Jyv\"{a}skyl\"{a}, Finland}
\email{jeongmin.han@pusan.ac.kr}

\keywords{Dynamic programming principle, stochastic game, oblique derivative boundary condition, viscosity solution}
\subjclass[2020]{91A05, 91A15, 35D40, 35B65}


\begin{abstract}
In this paper, we are concerned with game-theoretic interpretations to the following oblique derivative boundary value problem
\begin{align*}
\left\{ \begin{array}{ll}
\Delta_{p}^{N}u=0 & \textrm{in $ \Omega$,}\\
\langle \beta , Du \rangle  + \gamma u = \gamma G & \textrm{on $ \partial \Omega$,}\\
\end{array} \right.
\end{align*}
where $\Delta_{p}^{N}$ is the normalized $p$-Laplacian.
This problem can be regarded as a generalized version of the Robin boundary value problem for the Laplace equations.
We construct several types of stochastic games associated with this problem by using `shrinking tug-of-war'.
For the value functions of such games, we investigate the properties such as existence, uniqueness, regularity and convergence.
\end{abstract}

\maketitle
\tableofcontents

\section{Introduction}

In this paper, we study the existence, uniqueness, local and boundary regularity of tug-of-war type games related to oblique boundary value problems involving the normalized $p$-Laplacian 
$$ \Delta_{p}^{N}u = \Delta u + (p-2)\Delta_{\infty}^{N}u= \Delta u + (p-2)
\frac{ \langle D^{2}uDu, Du \rangle}{|Du|^{2}} $$
for $2<p<\infty$. The value function of the game we  first study satisfies the following dynamic programming principle (DPP for short) in $\Omega\subset \R^n$ satisfying the $C^{1,1}$-boundary regularity condition,
\begin{align} \label{dppobp} \begin{split}
 u_{\eps}(x)&= (1- (1-\alpha)\gamma  s_{\eps}(x)) \bigg\{\frac{\alpha}{2}\bigg( \sup_{B_{\eps d'_{x}}(x)} u_{\eps}+  
\inf_{B_{\eps d'_{x}}(x)} u_{\eps} \bigg)\\
&\quad \quad + (1-\alpha)  \kint_{B_{\eps }(x)\cap \Omega}u_{\eps}(y) dy \bigg\}  + (1-\alpha)\gamma  s_{\eps}(x)G(x)
\end{split}
\end{align}
with $\gamma>0$ and $G\in C^{1}(\Gamma_{\eps})$ where 
$\Gamma_{\eps}$ is the inner $\eps$-boundary strip of $\Omega$ (see \eqref{estrip} for the definition).
For $x\in \Omega$ with
$s_{\eps}(x)=0 $,
the DPP reads
\[
 u_{\eps}(x)=\frac{\alpha}{2}\bigg( \sup_{B_{\frac{\eps}{2}}(x)} u_{\eps}+  
\inf_{B_{\frac{\eps}{2}}(x)} u_{\eps} \bigg) + (1-\alpha)  \kint_{B_{\eps }(x)}u_{\eps}(y) dy,
\]
which is related to the game tug-of-war with noise. 
We postpone a more detailed presentation of the various terms of the DPP \eqref{dppobp} to Section \ref{subsec:mvc}.

The games have a connection to the following boundary value problem when $\eps\rightarrow 0$,
\begin{align} \label{defcapf}
\left\{ \begin{array}{ll}
\Delta_{p}^{N}u=0 & \textrm{in $ \Omega$,}\\
\langle \mathbf{n}  , Du  \rangle + \gamma u = \gamma G & \textrm{on $ \partial \Omega$,}\\
\end{array} \right.
\end{align}
where 
$\mathbf{n}$ is  the outward normal vector on $\partial \Omega$.
 Theorems \ref{convrob} implies that the value function $u_\eps$ of DPP \eqref{dppobp} converges uniformly to a solution of this PDE when $p\geq 2$.
We give a heuristic game interpretation for the boundary condition of \eqref{defcapf}.
Our game is basically played only within $\Omega$, and hence we do not consider such a situation that the token goes outside the domain. 
The boundary condition of the model problem is associated with the game rule when the token is near the boundary.
More precisely, it is related to the area where a random walk occurs and the probability that the game ends at this point.

Lewicka and Peres \cite{MR4684373,MR4684385} invented a probabilistic interpretation for the problem \eqref{defcapf} in the case $p=2$.
The related DPP to the stochastic process coincides with \eqref{dppobp} when $\alpha=0$. They showed, among other things, that the DPP has a unique, asymptotically H\"{o}lder continuous solution up to the boundary, converging uniformly to the viscosity solution of the PDE.
We aim to extend some of their results to the range $\alpha\in (0,1)$. 
To this end, one of the main issues is how to set the game near the boundary.
We construct our game so that the token must not go outside $\Omega$,
and we still need to consider tug-of-war games in a small ball, since tug-of-war in an asymmetric domain may affect the boundary condition to the associated PDE.
For that reason, we considered tug-of-war games with shrinking step sizes as the token approaches the boundary. 
Our main result is the boundary H\"{o}lder regularity for the solution of the DPP \eqref{dppobp}, which will be restated as Theorem \ref{bdryreg}.
We also dealt with the convergence of the solution to \eqref{dppobp} as $\eps \to 0$ in Theorem \ref{convrob}.
\begin{theorem*} 
Let $\Omega \subset \mathbb{R}^{n}$ be a $C^{1,1}$-domain, $G \in C^{1}(\Gamma_{\eps})$  and $\sigma \in (0,1)$  to be determined later. 
Then, there exists a unique function $u_{\eps}$ solving \eqref{dppobp}.
Moreover, there exists $\delta_{0}\in(0,1)$ such that for every $\delta \in (0,\delta_{0})$ and $x_{0},y_{0} \in \overline{\Omega}$ 
with $|x_{0}-y_{0}|\le \delta$ and $$\dist (x_{0},\partial \Omega),\dist (y_{0},\partial \Omega)\le \delta^{1/2},$$ we have
\begin{align*}
|u_{\eps}(x_{0})-u_{\eps}(y_{0})|\le C  ||G||_{L^{\infty}(\Gamma_{\eps})}\delta^{\sigma/2} 
\end{align*}
for some $C$ depending on $n, \alpha, \gamma, \sigma$ and $\Omega$, and $\eps << \delta$. 

Moreover, there exists a function $u: \overline{\Omega} \to \mathbb{R}^{n}$ and
a subsequence $\{  \eps_{i}\}$ such that $u_{\eps_{i}}$ converges uniformly to $u$ on $\overline{\Omega}$ and
$u$ is a viscosity solution to the problem \eqref{defcapf}.
\end{theorem*}
Section \ref{sec:regularity} is generally devoted to proving the above main theorem.
The proof is technical, based on constructing suitable submartingales and carefully estimating stopping times for the game near the boundary.  
To this end, we present a geometric observation (Lemma \ref{mcs}) and an estimate of stopping times for an alternative game (Lemma \ref{esttaus}) associated with the tug-of-war setting.
Compared to the random walk case, we need to take into account the strategies of each player in the game.
Under our setting, we could develop a suitable cancellation argument (cf. \cite{MR3011990}) to deal with those controlled processes. 
And, in Section \ref{sec:oblique}, we extend our discussion to the general oblique derivative boundary value condition 
\begin{align} \label{odcon}
\langle \beta , Du \rangle  + \gamma u = \gamma G  \end{align}(Theorem \ref{bdryregob}).
 For those problems, we introduce stochastic processes containing random walks in ellipsoids relevant to the vector $\beta$. 
Several geometric observations are required to derive the desired regularity under our oblique settings 
(Proposition \ref{besob} - \ref{mvcob}).
We mention that the rotational invariance of balls plays an important role in deriving an estimate of terms associated with random walks in the normal case. 
Of course, we cannot use this property for general $\beta$ since ellipsoids are not rotationally invariant. 
We will give a proper estimate of the additional terms which arose from random walks in the ellipsoids 
so that our submartingale argument still works in the proof of Theorem \ref{bdryregob}.

The boundary value problem \eqref{defcapf} is motivated by two points of view.
When $\beta=\mathbf{n} $, the boundary condition of \eqref{defcapf}  
coincides with the Neumann ($\gamma =0$) or Robin boundary condition ($\gamma \neq0$) for the Laplace equation.
In \cite{MR3182683}, game-theoretic interpretation was considered for the Neumann boundary value condition.
Such boundary conditions have been considered for the $p$-Laplace equations in the weak theory.
One can find works related to the Neumann problems for the $\infty$-Laplacian in \cite{MR2279530,MR2426555}.
We also refer to \cite{MR2749750,MR3626320,MR4426282} for the $p$-Laplacian with the Robin boundary conditions, see also \cite{MR4459519,MR4649165,MR4691561}.
We remark that the boundary conditions are given in different forms from that of \eqref{defcapf} since the boundary value problems are derived from the Euler-Lagrange equation: For the following energy functional
$$I(u)=\int_{\Omega}|Du|^{p}\ dx+ \gamma \int_{\partial\Omega}|u|^{p}\ dS-p \gamma  \int_{\partial\Omega}gu\ dS ,$$
its corresponding Euler-Lagrange equation is
\begin{align} \label{defcapfp}
\left\{ \begin{array}{ll}
\Delta_{p}u=0 & \textrm{in $ \Omega$,}\\
|Du|^{p-2}\langle\mathbf{n}, Du\rangle  + \gamma  |u|^{p-2}u =  \gamma  G & \textrm{on $ \partial \Omega$.}\\
\end{array} \right.
\end{align} 
Meanwhile, the boundary condition can also be regarded as a special case of oblique derivative boundary conditions, that is, \eqref{odcon} with $|\langle \beta, \mathbf{n}\rangle | \ge \delta_{0} $ for some $\delta_0>0$.
We present some preceding works \cite{MR3780142,MR4223051} regarding the regularity theory for fully nonlinear oblique derivative boundary value problems.
Calderon-Zygmund type regularity estimates for these problems can be found in \cite{MR4046185,MR4283880}.
For further studies on oblique derivative boundary value problems, see for example, \cite{MR765964,MR1388600,MR1796316}.
We also refer to \cite{MR3059278} for the general theory of the boundary value problems.

{\bf Acknowledgments} This research was supported by the Basic Science Research Program through the National Research Foundation of Korea (NRF) funded by the Ministry of Education (NRF-2021R1A6A3A14045195).
The author would like to thank M. Lewicka, M. Parviainen and E. Ruosteenoja for their useful discussions.
\section{Preliminaries}
\label{sec:prelim}
We present an interpretation for our DPP \eqref{dppobp} and its related stochastic game in this section.

\subsection{Mean value characterization for \eqref{defcapf}}
\label{subsec:mvc}
We will provide a mean value characterization of solutions for the PDE \eqref{defcapf}.
This characterization gives a motivation of the DPP \eqref{dppobp}. 
We begin this subsection by introducing some notations. 
For two $n$-dimensional vectors $a=(a_{1},\dots,a_{n})$ and $b=(b_{1},\dots,b_{n})$,
we define
$$ \langle a,b \rangle = \sum_{i=1}^{n} a_{i}b_{i} .$$
Similarly, for two $n\times n$ matrices $A=(a_{ij})$ and $B=(b_{ij})$, we define
$$ \langle A:B \rangle = \sum_{i,j=1}^{n} a_{ij}b_{ij}.$$
We write $B_{r}^{k}$ to be the $k$-dimensional ball with the radius $r$, and $$B_{r,d}^{k}=B_{r}^{k}\cap \{y_{k}< d\}$$ for $0\le d\le r$.
For each $x\in \Omega$, the projection of $x$ to $\partial\Omega$ will be denoted by $\pi_{\partial\Omega}x$.
We also define \begin{align*}d_{\eps}(x) =\min \bigg\{ 1, \frac{1}{\eps} \dist(x, \partial \Omega) \bigg\} \quad \textrm{and} \quad 
d'_{\eps}(x) =\min \bigg\{ \frac{1}{2}, \frac{1}{\eps} \dist(x, \partial \Omega) \bigg\} \end{align*}
for each $x\in \overline{\Omega}$.
If there is no room for confusion, we abbreviate $d_{\eps}(x), d'_{\eps}(x)$ to $d_{x}, d'_{x}$.

Mean value characterizations for the normalized $p$-Laplacian have been studied in several previous works. 
We will set a DPP given by 
\begin{align}\label{towdpp} u_{\eps}(x)= \frac{\alpha}{2}\bigg( \sup_{B_{\frac{\eps}{2}}(x)} u_{\eps}+  
\inf_{B_{\frac{\eps}{2}}(x)} u_{\eps} \bigg) + (1-\alpha) \kint_{B_{\eps}(x)}u_{\eps}(y) dy ,\end{align}
when $x \in \Omega \backslash \Gamma_{\eps} $, where 
\begin{align}\label{estrip}\Gamma_{\eps}:=\{x\in \overline{\Omega}: \dist(x, \partial\Omega)\le \eps \}
\end{align} is the inner $\eps$-boundary strip of $\Omega$.
This DPP has a similar form to that covered in \cite{MR3011990,MR3846232}.
We remark that we take the supremum and infimum of the function within $\frac{\eps}{2}$-ball, not $\eps$-ball as in those papers, for technical reasons.

We also have to consider the boundary condition of \eqref{defcapf} to develop a mean value characterization. 
In \cite[Lemma 2.1]{MR4684373} and \cite[Lemma 2.4]{MR4684373}, 
one can find the following geometric observations, 
\begin{align}\label{bdryintav} \kint_{B_{\eps}(x)\cap\Omega}(y-x)dy = -s_{\eps}(x)\mathbf{e}_{n},
\end{align} where 
\begin{align}\label{defs}s_{\eps}(x)= \frac{|B_{1}^{n-1}|}{(n+1)|B_{1,d_{\eps}(x)}^{n}|}\eps (1 -d_{\eps}(x)^{2} )^{\frac{n+1}{2}}
\end{align} 
and
\begin{align}\label{secorder}\kint_{B_{\eps}(x)\cap\Omega}(y-x)\otimes(y-x)dy=\frac{\eps^2}{n+2}I_n + O(\eps s_{\eps}(x)),
\end{align} 
provided that $B_{\eps}(x)\cap\Omega=x+B_{\eps,\eps d_{x}}^{n} $.
Intuitively, it can be understood that if we cut an $\eps$-ball along a plane, the geometric center of the remaining part moves in the normal direction of the plane.
In this case, the factor $s_{\eps}(x)$ represents the distance from the original center of the ball.
One can also check that the integration over the cut ball does not have much effect on the equation for the Laplacian.
These geometric properties enable to derive a mean value characterization for \eqref{defcapf} with $p=2$
just by considering the integration of the function over $B_{\eps}(x)\cap\Omega$ (see \cite[Theorem 3.1]{MR4684373}).

Now we state the mean value characterization for the case $2<p<\infty$.
To this end, we need to introduce new terminology and notation. 
We say that $\Omega$ satisfies the interior ball condition with the radius $\rho$ if for each $x \in \partial\Omega$,
there exists a ball $B_{\rho}(z) \subset \Omega$ for some $z \in \Omega$ and $\rho >0$
with $x \in \partial B_{\rho}(z) $. 
We can observe that if we assume $\partial\Omega$ is a $C^{1,1}$-domain,
$\Omega$ always satisfies the interior ball condition.
Next we fix $0<r<\rho$. Then for every $y \in \Gamma_{r}$, its projection $\pi_{\partial \Omega}(y)$ to $\partial \Omega$,  is uniquely defined.
Then there is a point $z_{0} \in \Omega$ such that $ B_{\rho}(z_{0}) \subset \Omega$ and $\pi_{\partial \Omega}(y)\in \partial B_{\rho}(z_{0})  $.
In that case, we can define a function $Z^{\rho}: \Gamma_{r} \to \Omega$ to be
$Z^{\rho}(y)=z_{0} $.

One can think that it would be natural to consider the supremum or infimum over the cut ball in \eqref{towdpp} near the boundary. 
Unlike the case $p=2$, however, the corresponding boundary condition becomes different from that in \eqref{defcapf} when $p>2$
because we cannot guarantee a similar property as \eqref{bdryintav} for $\frac{1}{2}(\sup u+\inf u)$. 
To avoid this problem, we take the supremum and infimum over a smaller (but uncut) ball near the boundary.  
We will investigate the relation between \eqref{defcapf} and \eqref{dppobp} based on this observation in Section \ref{sec:converge}. 

\begin{lemma}\label{taylorpde}
Let $u \in C^{2}(\overline{\Omega})$ be a function solving the problem \eqref{defcapf} with $G \in C^{1}(\Gamma_{r_{0}})$ for some $r_{0}>0$.
Assume that $Du(x)\neq 0$ for each $x\in \overline{\Omega}$.
Then $u$ satisfies
\begin{align*} 
u(x)& = \big(1-(1-\alpha)\gamma s_{\eps}(x) \big)\bigg\{\frac{\alpha}{2}\bigg( \sup_{B_{\eps d'_{x}}(x)} u +  
\inf_{B_{\eps d'_{x}}(x)} u  \bigg) + (1-\alpha)  \kint_{B_{\eps }(x)\cap \Omega}u (y) dy \bigg\} 
\\ & \quad+ (1-\alpha)\gamma s_{\eps}(x) G(x)+O(\eps s_{\eps}(x)  )+o(\eps^{2}),
\end{align*}
where $\alpha =\frac{4(p-2)}{4p+n-6}$
for every $x \in \overline{\Omega}$ and $\eps<<r_0$.
\end{lemma}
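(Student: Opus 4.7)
\emph{Proof plan.} The plan is to Taylor-expand each of the three nonlinear operators on the right-hand side around $x$, and then match the result with the PDE $\Delta_p^N u=0$ and the boundary condition $\langle\mathbf{n},Du\rangle+\gamma u=\gamma G$. Since $Du(x)\neq 0$ and $u\in C^2$, the points at which $u$ attains its sup and inf on $\overline{B_{\eps d'_x}(x)}$ lie at $x\pm \eps d'_x\,Du(x)/|Du(x)|+o(\eps)$. A standard second-order Taylor expansion then gives
\begin{equation*}
\tfrac{1}{2}\bigl(\sup_{B_{\eps d'_x}(x)}u+\inf_{B_{\eps d'_x}(x)}u\bigr)
= u(x) + \tfrac{(\eps d'_x)^2}{2}\Delta_{\infty}^{N}u(x) + o(\eps^{2}).
\end{equation*}

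Next I would expand $u(y)=u(x)+\langle Du(x),y-x\rangle+\tfrac{1}{2}\langle D^2u(x)(y-x),y-x\rangle+o(|y-x|^2)$ inside the average $\kint_{B_\eps(x)\cap\Omega}$ and plug in the geometric identities \eqref{bdryintav} and \eqref{secorder}. This yields
\begin{equation*}
\kint_{B_\eps(x)\cap\Omega}u(y)\,dy
= u(x)-s_\eps(x)\langle Du(x),\mathbf{e}_n\rangle+\tfrac{\eps^2}{2(n+2)}\Delta u(x)+O(\eps s_\eps(x))+o(\eps^2),
\end{equation*}
where $\mathbf{e}_n$ denotes the outward unit normal at $\pi_{\partial\Omega}x$ (the direction in which the cut ball's centroid shifts). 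Note that when $\dist(x,\partial\Omega)\geq\eps$ one has $s_\eps(x)=0$, $B_\eps(x)\cap\Omega=B_\eps(x)$ and $d'_x=1/2$, so all boundary-related terms vanish.

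Inserting both expansions into the claimed right-hand side, expanding $\bigl(1-(1-\alpha)\gamma_0 s_\eps(x)\bigr)$ and collecting terms (the cross product with the $O(\eps^2)$ bracket produces only $O(\eps s_\eps)$) reduces the identity to showing
\begin{equation*}
\tfrac{\alpha(d'_x)^2\eps^2}{2}\Delta_{\infty}^{N}u+\tfrac{(1-\alpha)\eps^2}{2(n+2)}\Delta u
+(1-\alpha)s_\eps(x)\bigl[-\langle Du,\mathbf{n}\rangle+\gamma_0(G-u)\bigr]
= O(\eps s_\eps(x))+o(\eps^2).
\end{equation*}
For $x$ with $\dist(x,\partial\Omega)\geq\eps/2$ (so $d'_x=1/2$), the $s_\eps$ term contributes an $O(\eps s_\eps)$ error by $C^1$-smoothness of $u$ and $G$ together with the boundary condition, and the $\eps^2$ part becomes $\tfrac{\eps^2}{2(n+2)}\bigl[\tfrac{\alpha(n+2)}{4}\Delta_{\infty}^{N}u+(1-\alpha)\Delta u\bigr]$. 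Solving $\tfrac{\alpha(n+2)/4}{1-\alpha}=p-2$ fixes $\alpha=\tfrac{4(p-2)}{4p+n-6}$, after which the bracket is a positive multiple of $\Delta_p^Nu$ and vanishes by the PDE. For $x$ with $\dist(x,\partial\Omega)<\eps/2$, one instead has $\eps d'_x=\dist(x,\partial\Omega)<\eps/2$, so the $\Delta_\infty^N$ and $\Delta$ contributions are $O(\eps^2)$ and can be absorbed in $O(\eps s_\eps(x))$ since $s_\eps(x)\sim\eps$ there; the leading $s_\eps$-term then vanishes to the same order thanks to the boundary condition at $\pi_{\partial\Omega}x$ and a Taylor estimate from $x$ to this projection.

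The main obstacle, as I see it, is keeping the two small parameters $\eps$ and $s_\eps(x)$ cleanly separated, because the required error $O(\eps s_\eps)+o(\eps^2)$ is not a single homogeneous order. One has to verify that every interior $\eps^2$-contribution near the boundary is dominated by the $\eps s_\eps$ scale (using $(d'_x)^2\eps^2\lesssim \eps s_\eps$ when $\dist<\eps/2$), and symmetrically that every boundary $s_\eps$-contribution in the deep interior is genuinely zero; the algebraic choice of $\alpha$ is then forced uniquely by the interior balance.
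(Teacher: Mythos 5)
Your proposal takes essentially the same route as the paper's proof: the same second-order Taylor expansion of the tug-of-war term (giving the $\frac{(\eps d'_x)^2}{2}\Delta_\infty^N u$ contribution), the same Taylor expansion of the cut-ball average combined with the geometric identities \eqref{bdryintav} and \eqref{secorder}, the same use of the boundary condition to trade $\langle Du,\mathbf{n}\rangle$ for $\gamma_0(G-u)$ up to $O(\eps s_\eps)$, and the same balance $\frac{\alpha(n+2)}{4(1-\alpha)}=p-2$ determining $\alpha$ from $\Delta_p^N u=0$. The only cosmetic difference is that you split into two cases ($\dist\ge\eps/2$ vs.\ $\dist<\eps/2$) and verify the claimed identity by substitution, whereas the paper splits into three cases and solves for $u(x)$ before expanding $\frac{1}{1+(1-\alpha)\gamma_0 s_\eps}$; these are equivalent bookkeeping.
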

\begin{proof} 
Fix a point $x\in \overline{\Omega}$. 
If $x \not\in \partial \Omega$, we have $B_{\eps d'_{x}}(x)\subset \Omega $. 
By using a similar argument to the proof of \cite[Theorem 2]{MR2566554}, we can check that
\begin{align}\label{inflapvar}\begin{split} \frac{1}{2}\bigg( \sup_{B_{\eps d'_{x}}(x)} u +  
\inf_{B_{\eps d'_{x}}(x)} u  \bigg) &=
u(x)+ \frac{1}{2} \frac{\langle D^{2}u(x)Du(x), Du(x) \rangle}{|Du(x)|^{2}}(\eps d'_{x})^{2}+ o(\eps^{2})
\\ & = u(x)+\frac{1}{2}\Delta_{\infty}^{N}u(x)(\eps d'_{x})^{2}+o(\eps^{2}).
\end{split}
\end{align}
Note that this observation also holds when $x\in \partial \Omega$ since both sides coincide with $u(x)$.

On the other hand, from the Taylor expansion, we observe that
\begin{align*} \kint_{B_{\eps}(x)\cap \Omega }u(y)dy&=u(x)+\bigg\langle Du(x),  \kint_{B_{\eps}(x)\cap \Omega }(y-x)dy \bigg\rangle
\\ &\quad + \frac{1}{2}\bigg\langle D^{2}u(x):\kint_{B_{\eps}(x)\cap \Omega }(y-x)\otimes(y-x)dy\bigg\rangle + o(\eps^{2}).
\end{align*}

From Lemma 2.3 in \cite{MR4684373}, we also have
\begin{align*}
\bigg\langle Du(x), \kint_{B_{\eps}(x)\cap \Omega}(y-x) dy \bigg\rangle
& = -s_{\eps}(x) \langle Du(x), \mathbf{n}(\pi_{\partial \Omega}x)\rangle + O(\eps s_{\eps}(x)  ) 
\\ & = -s_{\eps}(x) \langle Du(\pi_{\partial \Omega}x), \mathbf{n}(\pi_{\partial \Omega}x)\rangle + O(\eps s_{\eps}(x)  ) 
\\ & = \gamma s_{\eps}(x)(u(\pi_{\partial \Omega}x))-G(\pi_{\partial \Omega}x)))
+ O(\eps s_{\eps}(x)  ),
\end{align*}
and
\begin{align*}
\bigg\langle D^{2}u(x):\kint_{B_{\eps}(x)\cap \Omega }(y-x)\otimes(y-x)dy\bigg\rangle
= \frac{\Delta u(x)}{n+2}\eps^{2}+O(\eps s_{\eps}(x)).
\end{align*}
We have used the $C^{1,1}$-boundary regularity condition of $\partial \Omega$ to guarantee the well-definedness of $\mathbf{n}$.

Now we get
\begin{align}\label{add1}\begin{split}&\frac{\alpha}{2}\bigg( \sup_{B_{\eps d'_{x}}(x)} u +  
\inf_{B_{\eps d'_{x}}(x)} u  \bigg) + (1-\alpha)  \kint_{B_{\eps }(x)\cap \Omega}u (y) dy  
\\&=  u(x)+\frac{\alpha}{2}\Delta_{\infty}^{N}u(x)(d'_{x}\eps)^{2}+
(1-\alpha)\big( \gamma s_{\eps}(x)(u(\pi_{\partial \Omega}x)-G(\pi_{\partial \Omega}x))\big)
\\ &\quad +\frac{1-\alpha}{2(n+2)}\Delta u(x)\eps^{2} + O(\eps s_{\eps}(x)  )+o(\eps^{2}).
\end{split}
\end{align}

Assume that $\dist(x,\partial \Omega)\ge \eps$, that is, $d_{x}= 1$. Then we have $s_{\eps}(x)=0$ and this yields
\begin{align}\label{add2}\begin{split}&\frac{\alpha}{2}\bigg( \sup_{B_{\frac{\eps}{2}}(x)} u +  
\inf_{B_{\frac{\eps}{2}}(x)} u  \bigg) + (1-\alpha)  \kint_{B_{\eps }(x)\cap \Omega}u (y) dy  
\\&=  u(x)+\frac{\alpha}{8}\Delta_{\infty}^{N}u(x)\eps^{2}  +\frac{1-\alpha}{2(n+2)}\Delta u(x)\eps^{2} +o(\eps^{2})
\\ & = u(x)+ o(\eps^{2}).
\end{split}\end{align}
We used $\Delta_{p}^{N}u=0$ in the last equality provided
$\alpha=\frac{4(p-2)}{4p+n-6}. $

Next, we consider the case $\frac{\eps}{2}\le \dist(x,\partial \Omega)<\eps$.
We see that $d'_{x}=\frac{1}{2}$, and hence
\begin{align*}&\frac{\alpha}{2}\bigg( \sup_{B_{\eps d'_{x}}(x)} u +  
\inf_{B_{\eps d'_{x}}(x)} u  \bigg) + (1-\alpha)  \kint_{B_{\eps }(x)\cap \Omega}u (y) dy  
\\&=  u(x)+\frac{\alpha}{8}\Delta_{\infty}^{N}u(x)\eps^{2}+\frac{1-\alpha}{2(n+2)}\Delta u(x)\eps^{2} +
(1-\alpha) \gamma s_{\eps}(x)(u(\pi_{\partial \Omega}x)-G(\pi_{\partial \Omega}x)) 
\\ &\quad+ O(\eps s_{\eps}(x)  )+o(\eps^{2})
\\ &=  u(x)+ (1-\alpha) \gamma s_{\eps}(x) (u(x)-G(x))+ O(\eps s_{\eps}(x)  )+o(\eps^{2}).
\end{align*}
We used the assumptions that $u\in C^{2}(\overline{\Omega})$ and $G\in C^{1}(\Gamma_{r_{0}})$ to derive
the last estimate.
This gives 
\begin{align*}u(x)= & \frac{1}{1+(1-\alpha)\gamma s_{\eps}(x) }\bigg\{
\frac{\alpha}{2}\bigg( \sup_{B_{\eps d'_{x}}(x)} u +  
\inf_{B_{\eps d'_{x}}(x)} u  \bigg) + (1-\alpha)  \kint_{B_{\eps }(x)\cap \Omega}u (y) dy  \bigg\}
\\& +\frac{(1-\alpha)\gamma s_{\eps}(x)}{1+(1-\alpha)\gamma s_{\eps}(x)}G(x)+ O(\eps s_{\eps}(x)  )+o(\eps^{2}).
\end{align*}
Since
$$ \frac{1}{1+(1-\alpha)\gamma s_{\eps}(x) }= 1- (1-\alpha)\gamma s_{\eps}(x) + O(\eps s_{\eps}(x)),$$
we get
\begin{align}\label{add3}\begin{split}u(x)= & \big(1-(1-\alpha)\gamma s_{\eps}(x) \big)\bigg\{
\frac{\alpha}{2}\bigg( \sup_{B_{\eps d'_{x}}(x)} u +  
\inf_{B_{\eps d'_{x}}(x)} u  \bigg) + (1-\alpha)  \kint_{B_{\eps }(x)\cap \Omega}u (y) dy  \bigg\}
\\& + (1-\alpha)\gamma s_{\eps}(x)G(x)+ O(\eps s_{\eps}(x)  )+o(\eps^{2}).\end{split}
\end{align}

Finally, we take into account near the boundary case, that is, $\dist(x,\partial \Omega)< \frac{\eps}{2}$. 
Similarly, as above, we observe that
\begin{align*} &\frac{\alpha}{2}\bigg( \sup_{B_{\eps d'_{x}}(x)} u +  
\inf_{B_{\eps d'_{x}}(x)} u  \bigg) + (1-\alpha)  \kint_{B_{\eps }(x)\cap \Omega}u (y) dy  
\\&=   u(x)+\bigg(\frac{\alpha}{2}\Delta_{\infty}^{N}u(x)(d'_{x})^{2} +\frac{1-\alpha}{2(n+2)}\Delta u(x)\bigg)\eps^{2}
\\ &\quad+(1-\alpha) \gamma s_{\eps}(x)(u(\pi_{\partial \Omega}x)-G(\pi_{\partial \Omega}x)) + O(\eps s_{\eps}(x)  )
 \\ & = u(x)+ (1-\alpha) \gamma s_{\eps}(x) (u(x)-G(x)) + O(\eps s_{\eps}(x)  ).
\end{align*}
We used $$\bigg(\frac{\alpha}{2}\Delta_{\infty}^{N}u(x)(d'_{x})^{2} +\frac{1-\alpha}{2(n+2)}\Delta u(x)\bigg)\eps^{2}
=O(\eps^{2})=O(\eps s_{\eps}(x))$$
 since $u\in C^{2}(\overline{\Omega})$.
Now we have
\begin{align}\label{add4}\begin{split} 
u(x)&= \big(1-(1-\alpha)\gamma s_{\eps}(x) \big)\bigg\{\frac{\alpha}{2}\bigg( \sup_{B_{\eps d'_{x}}(x)} u +  
\inf_{B_{\eps d'_{x}}(x)} u  \bigg) + (1-\alpha)  \kint_{B_{\eps }(x)\cap \Omega}u (y) dy \bigg\} 
\\ & \quad+ (1-\alpha)\gamma s_{\eps}(x) G(x)+O(\eps s_{\eps}(x)  ).
\end{split}\end{align}

Combining \eqref{add2}, \eqref{add3} with \eqref{add4}, we complete the proof.
\end{proof}

\subsection{Setting of the stochastic game}
\label{ss:settinggame}
Here we construct a tug-of-war game associated with the DPP \eqref{dppobp}, that is,
\begin{align*}
 u_{\eps}(x)&= (1- (1-\alpha)\gamma  s_{\eps}(x)) \bigg\{\frac{\alpha}{2}\bigg( \sup_{B_{\eps d'_{x}}(x)} u_{\eps}+  
\inf_{B_{\eps d'_{x}}(x)} u_{\eps} \bigg)\\
&\quad \quad + (1-\alpha)  \kint_{B_{\eps }(x)\cap \Omega}u_{\eps}(y) dy \bigg\}  + (1-\alpha)\gamma  s_{\eps}(x)G(x).
\end{align*}
We give a brief description of our game.
Our game is basically a tug-of-war with noise, where the step size is $\frac{\eps}{2}$
and the random noise occurs in the $\eps$-ball centered at the point. 
But if the token is near the boundary,
the size of the tug-of-war is shrinking and the probability that the game ends at the point gets larger.
This game can be seen as a shrinking tug-of-war game, that is, the size of the step in the game is shrinking as the token is close to the boundary (for shrinking tug-of-war games, see \cite{MR3450753,MR4529386,MR3759468}).

Now we construct the game rigorously as follows. We begin the game at $x_{0} \in \overline{\Omega}$.
With a probability $\gamma s_{\eps}(x_{0})$, the game is over and Player II pays Player I the payoff $G(x_{0}) $.
Otherwise, the players play a tug-of-war game with noise.
They have a fair coin toss with a probability $\alpha$.
The winner of the coin toss can move the token within the ball $B_{\eps d'_{x_{0}}}(x_{0}) $.
Meanwhile, with a probability $1-\gamma s_{\eps}(x_{0})$, the token is randomly moved according to the uniform distribution in $B_{\eps}(x_{0})\cap \overline{\Omega} $.
We denote by $x_{1}$ the new position of the token.
We can define $x_{2}, x_{3}, \dots$ by repeating the same process and write $x_{\tau}$
as the point where the game is over.

Set $\tilde{C}=\{0,1\}$.
For each $k=1,2,\dots$, $\xi_{k}\in [0,1]$ is randomly selected with the uniform distribution, 
and $c \in \tilde{C}$ is defined as
\begin{align*} c_{k} =
\left\{ \begin{array}{ll}
0 & \textrm{if $ \xi_{k-1} \le 1-\gamma s_{\eps}(x)  $,}\\
1 & \textrm{if $ \xi_{k-1} > 1-\gamma s_{\eps}(x)$.}\\
\end{array} \right.
\end{align*}
A strategy is a Borel-measurable function giving the next position of the token.
For $j \in \{ I, II\}$, we define 
$$S_{j}^{k} : \to B_{ d'_{X_{k}}}(0) $$
for each $k=1,2,\dots$.
We also define $(\Upsilon,\mathcal{F},\mathbb{P}^{S_{I},S_{II}}) $,
 the countable product of $(\Upsilon_{1},\mathcal{F}_{1},\mathbb{P}_{1}^{S_{I},S_{II}}) $, by 
\begin{align*} \Upsilon&=(\Upsilon_{1})^{\mathbb{N}} \\&=\{ \omega =\{(\omega_{i},b_{i})\}_{j=1}^{\infty}
: w_{i}=\{w_{i}^{j}\}_{j=1}^{\infty}, w_{i}^{j}\in B_{1}^{n},  b_{1}\in (0,1)  
\textrm{ for all } i,j\in \mathbb{N} \}. 
\end{align*}
Furthermore, for each $k \in \mathbb{N}$, we set the probability space $(\Upsilon_{k},\mathcal{F}_{k},\mathbb{P}_{k}^{S_{I},S_{II}}) $ as the product on $k$-copies of $(\Upsilon_{1},\mathcal{F}_{1},\mathbb{P}_{1}^{S_{I},S_{II}})$.
Note that $\mathcal{F}_{k} $ is identified with the sub-$\sigma$-algebra of $\mathcal{F}$
consisting of sets $A\times \prod_{i=k+1}^{\infty}\Upsilon_{1}$ for every $A \in \mathcal{F}_{k} $.
We notice that $\{ \mathcal{F}_{k} \}_{n=0}^{\infty}$ with $\mathcal{F}_{0}=\{ \varnothing, \Upsilon \} $ is a filtration of $\mathcal{F}$. 

We define the sequence of measurable functions $\{ l_{i}^{\eps} : \Upsilon \times \overline{\Omega} \to \mathbb{N}\cup \{+\infty \} \}_{i=1}^{\infty}$ by
$$l_{i}^{\eps}(\omega, x)=\min \{ l \ge 1 : x+\eps w_{i}^{l} \in B_{\eps}(x)\cap \Omega
\} \qquad \textrm{for every } \omega \in \Upsilon, \ x \in \overline{\Omega}.$$
Since $l_{i}^{\eps} $ is $\mathbb{P}$-a.s. finite, we can also set the sequence of vector-valued random variables $\{ w_{i}^{\eps, x}\}_{i=1}$ by
$$ w_{i}^{\eps, x}(\omega)=w_{i}^{l_{i}^{\eps} (\omega, x)} \qquad  \textrm{for $\mathbb{P}$-a.e. } \omega \in \Upsilon.  $$

Now we define the sequence of vector-valued random variables $\{ X_{k}^{\eps, x_{0}}\}_{k=0}$ by
$X_{0}^{\eps, x_{0}} \equiv x_{0}$ and 
\begin{equation}\label{gadef}X_{k}^{\eps, x_{0}} =
\left\{ \begin{array}{llll}
X_{k-1}^{\eps, x_{0}}+\eps d'_{X_{k-1}}S_{I}^{k-1}& \textrm{if $0< \xi_{k-1} \le \frac{\alpha}{2}  (1-(1-\alpha)\gamma s_{\eps}(X_{k-1})) $,}\\
X_{k-1}^{\eps, x_{0}}+\eps d'_{X_{k-1}}S_{II}^{k-1} &  \\
& \hspace{-8em}\textrm{if $ (1-\frac{\alpha}{2}) (1-(1-\alpha)\gamma s_{\eps}(X_{k-1})) \le \xi_{k-1} <  1-(1-\alpha)\gamma s_{\eps}(X_{k-1})$,}\\
X_{k-1}^{\eps, x_{0}}+\eps w_{k}^{X_{k-1}^{\eps, x_{0}}} & \\
& \hspace{-9em} \textrm{if $ \frac{\alpha}{2} (1-(1-\alpha)\gamma s_{\eps}(X_{k-1})) < \xi_{k-1} < (1-\frac{\alpha}{2}) (1-(1-\alpha)\gamma s_{\eps}(X_{k-1}))$,}\\
X_{k-1}^{\eps, x_{0}} & \textrm{if $ 1-(1-\alpha)\gamma s_{\eps}(X_{k-1})< \xi_{k-1} < 1$},\\
\end{array} \right.
\end{equation}
for each $k=1,2,\dots$.
We note that $ X_{k}^{\eps, x_{0}}$ is $\mathcal{F}_{k}$-measurable for each $k \ge 1$.
Given $\gamma >0$, we define $\tau^{\eps, x_{0}}:\Upsilon \to \mathbb{N}\cup \{+\infty \}$
by
$$\tau^{\eps, x_{0}}(\omega)= \min \{ k \ge 1 : \xi_{k} < \gamma s_{\eps}(X_{k-1}^{\eps, x_{0}}) \}. $$
We also define the value functions for Players I and II as
\begin{align}\label{vf1} u_{I}^{\eps}(x_{0})=\sup_{S_{I}}\inf_{S_{II}}\mathbb{E}_{S_{I},S_{II}}^{x_{0}} [G(x_{\tau})]
\end{align}
and 
\begin{align} \label{vf2} u_{II}^{\eps}(x_{0})=\inf_{S_{II}}\sup_{S_{I}}\mathbb{E}_{S_{I},S_{II}}^{x_{0}} [G(x_{\tau})],
\end{align}
respectively.
If the above two functions coincide, we can consider the value function of the game
 as
$ u_{\eps}=u_{I}^{\eps}=u_{II}^{\eps}.$
In that case, the value function satisfies the DPP \eqref{dppobp}.

\section{The existence and uniqueness of value functions}
In this section, we deal with the existence and uniqueness of value functions.
We first prove that the operator $  T_{\eps}^{G}$ given by
\begin{align} \label{defopt}\begin{split} 
T_{\eps}^{G}u(x)= &  (1-(1-\alpha)\gamma s_{\eps}(x)) \bigg\{\frac{\alpha}{2}\bigg( \sup_{B_{\eps d'_{x}}(x)} u_{\eps}+  
\inf_{B_{\eps d'_{x}}(x)} u_{\eps} \bigg) 
\\ & \qquad \qquad  + (1-\alpha)  \kint_{B_{\eps }(x)\cap \Omega}u_{\eps}(y) dy \bigg\} + (1-\alpha)\gamma s_{\eps}(x)G(x).\end{split}
\end{align}
 preserves continuity
for each $G \in C(\Gamma)$.
We note that this guarantees the existence of measurable strategies for players in each round of our game setting.
 
\begin{lemma}
Let $T_{\eps}^{G}$ be the operator in \eqref{defopt} with $G \in C(\Gamma)$.
Then $T_{\eps}^{G}$ preserves the continuity, that is, 
$T_{\eps}^{G}u \in C(\overline{\Omega})$ for any $u \in C(\overline{\Omega})$.
\end{lemma}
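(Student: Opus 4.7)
The plan is to split $T_\eps^G u$ into its three pieces and verify that each is continuous in $x \in \overline{\Omega}$, then conclude by algebra of continuous functions. Concretely, write
\[
T_\eps^G u(x) = (1-\gamma s_\eps(x))\Bigl[\tfrac{\alpha}{2}\bigl(M_\eps^+(x)+M_\eps^-(x)\bigr) + (1-\alpha)A_\eps(x)\Bigr] + \gamma s_\eps(x)G(x),
\]
where $M_\eps^{\pm}(x) = \sup/\inf_{B_{\eps d'_x}(x)} u$ and $A_\eps(x) = \kint_{B_\eps(x)\cap\Omega} u(y)\,dy$. Since $G \in C^1(\overline{\Gamma})$ is given, it suffices to prove continuity of $x \mapsto s_\eps(x)$, $x \mapsto M_\eps^{\pm}(x)$, and $x \mapsto A_\eps(x)$.

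First I would check continuity of the weights. The distance function $x \mapsto \dist(x,\partial\Omega)$ is $1$-Lipschitz, so $d_\eps(x)$ and $d'_\eps(x)$ are continuous on $\overline{\Omega}$. Reading off the formula \eqref{defs}, $s_\eps(x)$ is a continuous composition of $d_\eps(x)$ with the smooth expression $(1-d^2)^{(n+1)/2}/|B_{1,d}^n|$, noting that $|B_{1,d}^n|$ has a strictly positive lower bound on $d \in [0,1]$.

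Next I would handle the sup/inf terms $M_\eps^{\pm}$. Since $\eps d'_x \le \dist(x,\partial\Omega)$, each closed ball $\overline{B_{\eps d'_x}(x)}$ lies in $\overline{\Omega}$, and by continuity of $u$ the sup over the open ball coincides with the sup over its closure (with the convention $M_\eps^{\pm}(x) = u(x)$ when $d'_x = 0$, i.e.\ at $\partial\Omega$). The map $x \mapsto \overline{B_{\eps d'_x}(x)}$ varies continuously in the Hausdorff metric since both center and radius depend continuously on $x$. Combined with the uniform continuity of $u$ on the compact set $\overline{\Omega}$, a standard two-sided argument (pick near-optimizers in one ball, transport them into the neighboring ball, and use compactness for the reverse inequality) gives $M_\eps^{\pm} \in C(\overline{\Omega})$.

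For the averaged term $A_\eps(x)$, I would write it as $\bigl(\int_\Omega u(y)\mathbf{1}_{B_\eps(x)}(y)\,dy\bigr)/|B_\eps(x)\cap\Omega|$. Since $|\partial B_\eps(x)|=0$, the indicator $\mathbf{1}_{B_\eps(x_k)}$ converges pointwise a.e.\ to $\mathbf{1}_{B_\eps(x)}$ as $x_k \to x$, and dominated convergence (with bound $\|u\|_\infty$) yields continuity of the numerator; the same argument, applied to $u\equiv 1$, handles the denominator, which stays uniformly bounded below by a constant $c(\eps,\Omega)>0$ provided $\Omega$ has, say, Lipschitz boundary (so every $\eps$-ball centered in $\overline{\Omega}$ contains a definite amount of $\Omega$). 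Putting the three pieces together and invoking stability of $C(\overline{\Omega})$ under sums and products finishes the proof. The main subtlety I expect is the degeneration $d'_x \to 0$ at the boundary, which forces the sup/inf to be read on the closed ball and to collapse to $u(x)$; once this is done consistently with the Hausdorff continuity of the family of balls, the rest is routine.
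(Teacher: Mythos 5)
Your proof is correct and closely parallels the paper's argument, which likewise rests on continuity of $s_{\eps}$, the sup/inf terms, and the cut-ball average; the paper organizes this as a direct modulus-of-continuity estimate for $T_{\eps}^{G}u$ (splitting into the cases $x,z\in\Omega\setminus\Gamma_{\eps}$ and $x$ or $z$ in $\Gamma_{\eps}$), while you verify continuity piece by piece and then invoke algebra of continuous functions. The minor differences are that you treat the averaged term by dominated convergence rather than the paper's explicit bound $\rho(|x-z|)$ on the symmetric-difference ratio, and you flag the degenerate case $d'_{x}=0$ on $\partial\Omega$ and the lower bound on $|B_{\eps}(x)\cap\Omega|$ more explicitly — both are glossed over in the paper but unproblematic under the standing $C^{1,1}$ boundary assumption.
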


\begin{proof}
First, we consider the case $x,z \in \Omega \backslash \Gamma_{\eps}$. In this case,
\begin{align*} 
 T_{\eps}^{G}u(x)&=\frac{\alpha}{2}\bigg( \sup_{B_{\frac{\eps}{2}}(x)} u_{\eps}+  
\inf_{B_{\frac{\eps}{2}}(x)} u_{\eps} \bigg) + (1-\alpha)  \kint_{B_{ }(x)\cap \Omega}u_{\eps}(y) dy  .
\end{align*}

Then we observe that
\begin{align*}
& \big| T_{\eps}^{G}u(x)- T_{\eps}^{G}u(z) \big| \\ &= \bigg|
\frac{\alpha}{2}\bigg( \sup_{B_{\frac{\eps}{2}}(x)} u+  
\inf_{B_{\frac{\eps}{2}}(x)} u -  \sup_{B_{\frac{\eps}{2}}(z)} u-
\inf_{B_{\frac{\eps}{2}}(z)} u  \bigg) \\ & \qquad+ (1-\alpha) \bigg(  \kint_{B_{\eps}(x)}u(y) dy-  \kint_{B_{\eps}(z)}u(y) dy\bigg) \bigg|
\\ & \le  \alpha   \sup_{(y,w) \in B_{\frac{\eps}{2}}(x) \times B_{\frac{\eps}{2}}(z)}|u(y)-u(w)| 
+(1-\alpha) \bigg|  \kint_{B_{\eps}(x)}u(y) dy-  \kint_{B_{\eps}(z)\cap \Omega}u(y) dy\bigg|
\\ & \le \alpha \omega_{u} (|x-z|) + 2(1-\alpha) ||u||_{L^{\infty}(\Omega)}\rho (|x-z|),
\end{align*}
where $\omega_{u}$ is the modulus of continuity of $u$ and $$\rho (t)=\left\{ \begin{array}{ll}
1-\big(1- \frac{t}{2\eps} \big)^{n} & \textrm{for $ t<2\eps$,}\\
1 & \textrm{for $ t \ge 2\eps$.}\\
\end{array} \right.$$

We also consider the case $x\in \Gamma_{\eps}$ or $z\in \Gamma_{\eps}$.
Recall that $$s_{\eps}(x)= \frac{|B_{1}^{n-1}|}{(n+1)|B_{1,d_{x}}^{n}|}\eps (1 -d_{x}^{2} )^{\frac{n+1}{2}}$$ and $0 \le d_{\eps} \le 1$.
Then we have
\begin{align*}
& |T_{\eps}^{G}u(x)- T_{\eps}^{G}u(z)|\\ &=\bigg|
( 1-   (1-\alpha)\gamma s_{\eps}(x)) \bigg\{\frac{\alpha}{2}\bigg( \sup_{B_{\eps d'_{x}}(x)} u+  
\inf_{B_{\eps d'_{x}}(x)} u \bigg) + (1-\alpha)  \kint_{B_{\eps d_{x}}(x)\cap \Omega}u(y) dy \bigg\}
\\ & \quad  - ( 1-   (1-\alpha)\gamma s_{\eps} (z) ) \bigg\{\frac{\alpha}{2}\bigg( \sup_{B_{\eps d'_{z}}(z)} u+  
\inf_{B_{\eps d'_{z}}(z)} u \bigg) + (1-\alpha)  \kint_{B_{ \eps d_{z} }(z)\cap \Omega}u(y) dy \bigg\}\bigg|
\\ & \quad + (1-\alpha)  \big|\gamma s_{\eps}(x)G(x)
 -   \gamma s_{\eps}(z)G(z)\big|
\\ & \le \omega_{s_{\eps}}(|x-z|)  ||u||_{L^{\infty}(\Omega)}+ 2\omega_{u} (|x-z|) 
\\ & \quad + 4||u||_{L^{\infty}(\Omega)}\rho (|x-z|)+\gamma  \omega_{s_{\eps}}(|x-z|)||G||_{L^{\infty}(\Gamma_{\eps})}+ \gamma\omega_{G}(|x-z|) .
\end{align*}

Since we assumed that $u$, $F$ and $s_{\eps}$ are continuous, we see that $T_{\eps}^{G}u $ is continuous when $x\in \Gamma_{\eps}$ or $z\in \Gamma_{\eps}$. This completes the proof.
\end{proof}

We also have an equiboundedness for the family of functions satisfying \eqref{dppobp}.
\begin{lemma} \label{eqbdd}
For any $0<\eps<<1$, let $u_{I}^{\eps}$ ($u_{II}^{\eps}$, respectively) be the value function for Player I (Player II, respectively) satisfying \eqref{dppobp} for a given boundary data $G \in C(\Gamma)$.
Then the family $ u_{\eps}^{G}$ is equibounded.
\end{lemma}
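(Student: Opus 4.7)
The plan is to exploit the game-theoretic representation recorded at the end of Section~\ref{sec:prelim},
\[
u_I^\eps(x_0) = \sup_{S_I}\inf_{S_{II}} \mathbb{E}^{x_0}_{S_I,S_{II}}\bigl[G(X^{\eps,x_0}_{\tau^{\eps,x_0}})\bigr], \qquad u_{II}^\eps(x_0) = \inf_{S_{II}}\sup_{S_I} \mathbb{E}^{x_0}_{S_I,S_{II}}\bigl[G(X^{\eps,x_0}_{\tau^{\eps,x_0}})\bigr].
\]
On the event $\{\tau^{\eps,x_0}<\infty\}$ the stopped state lies in $\overline{\Omega}$, and in fact in $\Gamma_\eps$ since this is the only region where $s_\eps>0$ can trigger termination. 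Therefore, as soon as one knows $\mathbb{P}^{S_I,S_{II}}(\tau^{\eps,x_0}<\infty)=1$ uniformly in $S_I,S_{II}$, the bound $|u_j^\eps(x_0)|\le\|G\|_{L^\infty(\Gamma_\eps)}$ is immediate; for $\eps$ small this is in turn controlled by the $\eps$-independent quantity $\|G\|_{L^\infty(\overline{\Gamma})}$, which gives the claimed equiboundedness of the family $\{u_\eps^G\}_\eps$.

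The bulk of the argument therefore reduces to proving uniform almost-sure termination. My plan is to use that at every step $k$ the noise component is selected with probability $(1-\alpha)(1-\gamma s_\eps(X_{k-1}))\ge(1-\alpha)/2$ (valid for $\eps$ small since $s_\eps=O(\eps)$), and that it draws $w^{\eps,X_{k-1}}_k$ from the uniform distribution on $B_\eps(X_{k-1})\cap\Omega$ \emph{independently} of $S_I$ and $S_{II}$. Using boundedness of $\Omega$ together with a finite cover by $\eps/2$-balls, one can chain positive-probability noise-only transitions to produce constants $N=N(\eps,\Omega)\in\mathbb{N}$ and $p_1=p_1(\eps,\Omega,\alpha)>0$ together with a compact set $K\subset\Gamma_\eps$ on which $s_\eps\ge c_0=c_0(\eps)>0$, such that from every $x\in\overline{\Omega}$ and every pair of strategies the token visits $K$ within $N$ steps with probability at least $p_1$. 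Once in $K$, the next coin satisfies $\xi_k<\gamma s_\eps(X_{k-1})$ with probability at least $\gamma c_0$. Iterating via the Markov property yields the geometric tail $\mathbb{P}^{S_I,S_{II}}(\tau^{\eps,x_0}>k(N+1))\le(1-p_1\gamma c_0)^k$ for every $k$, uniformly in strategies, whence $\tau^{\eps,x_0}<\infty$ almost surely.

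With termination in hand, the remaining estimate is routine: for any admissible $S_I,S_{II}$,
\[
\bigl|\mathbb{E}^{x_0}_{S_I,S_{II}}[G(X^{\eps,x_0}_{\tau^{\eps,x_0}})]\bigr|\le\|G\|_{L^\infty(\Gamma_\eps)}\le\|G\|_{L^\infty(\overline{\Gamma})},
\]
and this bound passes through the $\sup\inf$ and $\inf\sup$ in the definitions of $u_I^\eps$ and $u_{II}^\eps$, yielding equiboundedness uniformly in $\eps$ and $x_0\in\overline{\Omega}$.

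The main obstacle I anticipate is the uniform-in-strategies estimate of the second paragraph. The players can try to keep the token away from the stopping region $K$ by pulling towards the interior, but this must be defeated by the isotropic noise, which is invoked with a probability uniformly bounded below. The technical heart of the argument is the concatenation of positive-probability noise-only transitions along a fixed cover of $\overline{\Omega}$, producing a visiting probability for $K$ that is genuinely independent of $S_I$ and $S_{II}$.
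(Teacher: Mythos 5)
Your proof is correct and rests on the same representation formula and $L^\infty$ bound on $G$ that the paper invokes in a single line. What you add --- a uniform-in-strategies almost-sure termination argument, obtained by chaining noise-only transitions (each selected with probability at least $(1-\alpha)/2$ and drawn independently of $S_I,S_{II}$ from a density bounded below by $1/|B_\eps|$) until the token enters a region such as $\Gamma_{\eps/2}$ where $s_\eps$ is bounded below --- fills a genuine gap that the paper leaves implicit, since without $\mathbb{P}^{S_I,S_{II}}(\tau^{\eps,x_0}<\infty)=1$ the payoff $G(x_\tau)$ is undefined on a positive-probability event and the claimed bound $|u_j^\eps(x_0)|\le\|G\|_{L^{\infty}(\Gamma_\eps)}\le\|G\|_{L^{\infty}(\overline{\Gamma})}$ does not follow directly from the representation formula alone.
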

\begin{proof}
Recall \eqref{vf1} and \eqref{vf2}.
Since we assumed that $G \in C(\Gamma)$, we directly have
$$ ||u_{\eps}^{I}||_{L^{\infty}(\Omega)},||u_{\eps}^{II}||_{L^{\infty}(\Omega)} \le ||G||_{L^{\infty}(\Gamma_{\eps})}. $$ 
Thus, $\{ u_{I}^{\eps}\}_{\eps>0}$ and $\{ u_{II}^{\eps}\}_{\eps>0}$ are equibounded.
\end{proof}

Let $u_{0}\equiv -||G||_{L^{\infty}(\Gamma_{\eps})}$ and define $u_{k}=(T_{\eps}^{G})^{k}u_{0}$ for $n=1,2,\dots$.
We can see that $\{ u_{k}(x) \}_{k=0}$ is an increasing sequence for each $x \in \Omega$.
Then by the equiboundedness, we get that the sequence converges pointwise.
Let 
$$\underline{u}(x)=\lim_{k\to \infty} (T_{\eps}^{G})^{k}u_{0}(x). $$
Similarly, we can define
$$\overline{u}(x)=\lim_{k\to \infty} (T_{\eps}^{G})^{k}u_{0}(x) $$
for $u_{0}\equiv ||G||_{L^{\infty}(\Gamma_{\eps})}$.
We also observe that $\underline{u} \le \overline{u} $ in $\overline{\Omega}$, and
$\underline{u}$ and $ \overline{u} $ are lower and upper semicontinuous, respectively.

The following lemma gives the existence of functions satisfying \eqref{dppobp}. 
\begin{lemma}Let $\underline{u}$ and $\overline{u} $ be functions defined as above with $ G \in C(\Gamma)$. Then
these two functions satisfy \eqref{dppobp}, respectively.
\end{lemma}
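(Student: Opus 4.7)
The plan is to exploit the monotonicity of the operator $T_\eps^G$ defined in \eqref{defopt}, together with the monotonicity of the iterates $\{(T_\eps^G)^k u_0\}$, and then pass to the limit. Since the supremum, infimum, and average of a function over any fixed set are monotone in the input, and since the coefficient $1-\gamma s_\eps(x)$ is non-negative (indeed $\gamma s_\eps(x) = O(\eps)$ as $\eps \to 0$ by \eqref{defs}), the operator $T_\eps^G$ is monotone: $u \le v$ on $\overline \Omega$ implies $T_\eps^G u \le T_\eps^G v$ on $\overline \Omega$.

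For $\underline u$, I would start from $u_0 \equiv -\|G\|_{L^\infty(\Gamma_\eps)}$. A direct computation gives
\[
T_\eps^G u_0(x) = -\|G\|_{L^\infty(\Gamma_\eps)} + \gamma s_\eps(x)\bigl(G(x) + \|G\|_{L^\infty(\Gamma_\eps)}\bigr) \ge u_0(x),
\]
so by monotonicity $u_k = (T_\eps^G)^k u_0$ is non-decreasing in $k$. Lemma \ref{eqbdd} bounds the sequence from above by $\|G\|_{L^\infty(\Gamma_\eps)}$, hence $u_k \nearrow \underline u$ pointwise. The symmetric procedure starting from $u_0 \equiv \|G\|_{L^\infty(\Gamma_\eps)}$ produces a decreasing sequence converging pointwise to $\overline u$.

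The crux is to pass the limit inside $T_\eps^G$, i.e. to verify $T_\eps^G u_k(x) \to T_\eps^G \underline u(x)$ for every $x \in \overline \Omega$; combined with $T_\eps^G u_k = u_{k+1} \to \underline u$ this yields the desired fixed-point identity \eqref{dppobp}. The average term converges to $\kint_{B_\eps(x) \cap \Omega} \underline u(y)\, dy$ by the monotone convergence theorem. For the supremum term, each $u_k$ is continuous by the previous lemma, so the supremum over the open ball agrees with that over its closure; monotonicity gives $\sup u_k \le \sup \underline u$, and for the reverse inequality I would pick, for given $\eta > 0$, a point $y \in B_{\eps d'_x}(x)$ with $\underline u(y) > \sup_{B_{\eps d'_x}(x)} \underline u - \eta$ and use $u_k(y) \to \underline u(y)$ to conclude $\lim_k \sup u_k \ge \sup \underline u - \eta$.

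The main obstacle is passing the limit through the $\inf$ term of the increasing sequence (and symmetrically through the $\sup$ for the decreasing sequence converging to $\overline u$). Here monotonicity only provides the one-sided bound $\inf u_k \le \inf \underline u$. I would argue by compactness: for each $k$, the continuous function $u_k$ attains its infimum on the closed ball $\overline{B_{\eps d'_x}(x)}$ at some $y_k$; extracting a convergent subsequence $y_k \to y_0$, one observes that $u_j(y_k) \le u_k(y_k) = \inf u_k$ whenever $k \ge j$, and passing $k \to \infty$ using continuity of $u_j$ yields $u_j(y_0) \le \lim_k \inf u_k$. Letting $j \to \infty$ gives $\underline u(y_0) \le \lim_k \inf u_k$, which matches the trivial upper bound and produces $\lim_k \inf u_k = \inf \underline u$. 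Once the three pieces pass to the limit, the identity $u_{k+1} = T_\eps^G u_k$ yields \eqref{dppobp} for $\underline u$; the argument for $\overline u$ is entirely symmetric.
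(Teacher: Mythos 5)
Your proposal is correct and follows essentially the same approach as the paper: monotonicity of $T_\eps^G$ gives an increasing sequence whose limit passes through the sup term directly, a compactness argument on the minimizers handles the inf term (you extract a convergent subsequence of minimizers $y_k \to y_0$, while the paper invokes a fixed point $y^\ast$ in the nested sets $\{u_k \le \lambda_0\}$, but these are the same idea), and a convergence theorem handles the integral (you use monotone convergence, the paper uses dominated convergence; both apply here since $|u_k| \le \|G\|_{L^\infty(\Gamma_\eps)}$). You also verify the initial step $T_\eps^G u_0 \ge u_0$, which the paper asserts without proof; this is a useful detail.
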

\begin{proof}
Without loss of generality, it is enough to show that
\begin{align*} \underline{u}(x)&= (1- (1-\alpha)\gamma s_{\eps}(x)) \bigg\{\frac{\alpha}{2}\bigg( \sup_{B_{\eps d'_{x}}(x)}  \underline{u}+  
\inf_{B_{\eps d'_{x}}(x)} \underline{u} \bigg) + (1-\alpha)  \kint_{B_{\eps }(x)\cap \Omega} \underline{u}(y) dy \bigg\}
\\ & \qquad +  (1-\alpha)\gamma s_{\eps}(x)G(x).
\end{align*}

Since 
\begin{align*}& \underline{u}(x)=\lim_{k\to \infty}(T_{\eps}^{G}) u_{k}(x)
\\ & =(1- (1-\alpha)\gamma s_{\eps}(x))\lim_{k\to \infty} \bigg\{\frac{\alpha}{2}\bigg( \sup_{B_{\eps d'_{x}}(x)}  u_{k}+  
\inf_{B_{\eps d'_{x}}(x)} u_{k} \bigg) + (1-\alpha)  \kint_{B_{\eps }(x)\cap \Omega}\hspace{-2em}u_{k}(y) dy \bigg\}
\\ & \qquad + (1-\alpha) \gamma s_{\eps}(x)G(x)
\end{align*}
with $ u_{0}\equiv -||G||_{L^{\infty}(\Gamma_{\eps})}$, 
we need to prove that
\begin{align*} & \frac{\alpha}{2}\bigg( \sup_{B_{\eps d'_{x}}(x)}  \underline{u}+  
\inf_{B_{\eps d'_{x}}(x)} \underline{u} \bigg) + (1-\alpha)  \kint_{B_{\eps }(x)\cap \Omega} \underline{u}(y) dy
\\ & = \lim_{k\to \infty} \bigg\{\frac{\alpha}{2}\bigg( \sup_{B_{\eps d'_{x}}(x)}  u_{k}+  
\inf_{B_{\eps d'_{x}}(x)} u_{k} \bigg) + (1-\alpha)  \kint_{B_{\eps }(x)\cap \Omega} u_{k}(y) dy \bigg\}
.
\end{align*}

We first see that 
$$\sup_{B_{\eps d'_{x}}(x)} \underline{u}=  \lim_{k\to \infty} \sup_{B_{\eps d'_{x}}(x)}  u_{k},$$
since $\{  u_{k}(x)\} $ is an increasing sequence for each $x \in \overline{\Omega}$.
On the other hand, for the infimum case,   
we set $$\lambda_{0}= \lim_{k\to \infty}\inf_{B_{\eps d'_{x}}(x)} u_{k},$$ and
observe that there exists a point $y^{\ast} \in \overline{\Omega}$ such that $u_{k}(y^{\ast})\le \lambda_{0}$,
since each $u_{k}$ is continuous in $\overline{\Omega}$ and $u_{k}\le u_{k+1}$ in $\overline{\Omega}$ for all $k\ge0$. Hence,  
$$\lambda_{0}= \lim_{k\to \infty}\inf_{B_{\eps d'_{x}}(x)} u_{k}\le\inf_{B_{\eps d'_{x}}(x)} \underline{u} \le \underline{u}(y^{\ast})
= \lim_{k\to \infty}u_{k}(y^{\ast}) \le \lambda_{0} ,$$
and it gives 
$$\inf_{B_{\eps d'_{x}}(x)} \underline{u}=  \lim_{k\to \infty} \inf_{B_{\eps d'_{x}}(x)}  u_{k}. $$

Finally, since $|\underline{u}| \le  ||g||_{L^{\infty}(\Gamma_{\eps})}$, we obtain the equalities for the integral terms
by the Lebesgue dominated convergence theorem.
\end{proof}

Next, we show the uniqueness of the function satisfying \eqref{dppobp}.
To show that, we employ the arguments in preceding papers, for example, \cite[Theorem 2.3]{MR3011990} or \cite[Theorem 3.6]{MR3623556}.
\begin{lemma} \label{inin}
It holds  $\overline{u} \le u_{I}^{\eps} \le u_{II}^{\eps}\le \underline{u} $ in $\Omega$.
\end{lemma}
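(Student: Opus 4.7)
The proof breaks into three inequalities: $\overline{u} \le u_I^\eps$, $u_I^\eps \le u_{II}^\eps$, and $u_{II}^\eps \le \underline{u}$. The middle one is the standard $\sup\inf \le \inf\sup$ inequality: for any fixed pair $(S_I,S_{II})$,
$$\inf_{S'_{II}}\mathbb{E}_{S_I,S'_{II}}^{x_0}[G(X_\tau)] \le \mathbb{E}_{S_I,S_{II}}^{x_0}[G(X_\tau)] \le \sup_{S'_I}\mathbb{E}_{S'_I,S_{II}}^{x_0}[G(X_\tau)],$$
after which one takes $\sup_{S_I}$ on the left and $\inf_{S_{II}}$ on the right. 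The substance is in the two outer inequalities, both handled by a sub/supermartingale construction and optional stopping.

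For $\overline{u} \le u_I^\eps$, fix $\eta>0$ and construct a Borel-measurable strategy $S_I^*$ for Player I that, at step $k$ from position $X_{k-1}$, chooses $y_I^k \in B_{\eps d'_{X_{k-1}}}(X_{k-1})$ with
$$\overline{u}(y_I^k) \ge \sup_{B_{\eps d'_{X_{k-1}}}(X_{k-1})} \overline{u} - \eta\,2^{-k}.$$
Such a selector exists because $\overline{u}$ is upper semicontinuous on $\overline{\Omega}$, so the supremum is attained and one may apply a measurable-selection theorem. For any adversarial $S_{II}$, set
$$Z_k = \overline{u}(X_k)\,\mathbf{1}_{\{\tau>k\}} + G(X_\tau)\,\mathbf{1}_{\{\tau\le k\}}.$$
Using the fixed-point identity $T_\eps^G\overline{u}=\overline{u}$, the near-optimality of $S_I^*$, and the trivial bound $\overline{u}(y_{II}^k) \ge \inf_{B_{\eps d'_{X_{k-1}}}(X_{k-1})} \overline{u}$, a direct expansion of the conditional expectation yields
$$\mathbb{E}[Z_k \mid \mathcal{F}_{k-1}] \ge Z_{k-1} - \eta\,2^{-k} \qquad \text{on } \{\tau>k-1\}.$$
Hence $M_k := Z_k - \eta\,2^{-k}$ is a submartingale. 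The stopping time $\tau$ is almost surely finite because the noise component, which has probability $(1-\alpha)(1-\gamma s_\eps)>0$ at every step, drives the token toward $\partial\Omega$ where $\gamma s_\eps(x)$ is bounded below. Applying optional stopping and noting $|M_k|$ is uniformly bounded by $\|G\|_\infty+\eta$ gives
$$\mathbb{E}_{S_I^*,S_{II}}^{x_0}[G(X_\tau)] \ge \overline{u}(x_0) - 2\eta.$$
Taking $\inf_{S_{II}}$, then $\sup_{S_I}$, and sending $\eta\to 0$ produces $\overline{u}(x_0) \le u_I^\eps(x_0)$.

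The inequality $u_{II}^\eps \le \underline{u}$ is proved symmetrically: Player II uses a near-minimizing strategy $S_{II}^*$ for $\underline{u}$, and the corresponding process is a supermartingale. The main technical obstacles are twofold: first, producing the near-optimal strategies as Borel-measurable functions of the game history rather than mere pointwise choices, which requires the continuity-preservation property of $T_\eps^G$ established in the preceding lemma together with a measurable-selection result; and second, controlling $\tau$ so that optional stopping applies, which is delicate because the termination probability $\gamma s_\eps(x)$ vanishes away from $\partial\Omega$ and the tug-of-war step size $\eps d'_x$ also shrinks near the boundary, potentially slowing the return to the strip where the game can end.
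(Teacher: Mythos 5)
Your proof takes the same route as the paper: construct a near-optimal strategy with geometrically decaying slack $\eta\,2^{-k}$, use the DPP fixed-point identity $T_\eps^G\overline u=\overline u$ (resp.\ $T_\eps^G\underline u=\underline u$) to show that the process tracking $\overline u$ (resp.\ $\underline u$) along the trajectory, falling back to the payoff $G$ once the game ends, is a sub/supermartingale, apply optional stopping, and let the slack go to zero. Your $Z_k-\eta\,2^{-k}$ is precisely the mirror image of the paper's supermartingale $\Phi(c_k,x_k)+\zeta\,2^{-k}$; the paper proves the $u_{II}^\eps\le\underline u$ side explicitly and appeals to symmetry for $\overline u\le u_I^\eps$, while you do the reverse, and you take the $\sup\inf\le\inf\sup$ reduction as known just as the paper does. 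One small imprecision: the noise step does not ``drive the token toward $\partial\Omega$'' --- it is a symmetric (mean-zero, in the interior) uniform step --- so the a.s.\ finiteness of $\tau$ should rather be argued by noting that from any point of $\overline\Omega$ there is a uniformly positive probability of a finite chain of noise steps carrying the token into the strip where $\gamma s_\eps$ is bounded below (this point is also left implicit in the paper).
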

\begin{proof}
Since  $u_{I}^{\eps} \le u_{II}^{\eps}$, we need to show that  $u_{II}^{\eps}\le \underline{u} $ and $\overline{u} \le u_{I}^{\eps} $.
Without loss of generality, we only give proof of the first inequality.

Let $x_{0} \in \overline{\Omega}$. For $\zeta > 0$, we set a strategy $S_{II}^{*}$ for Player II such that
$$\underline{u}(S_{II}^{*}(x_{j})) =\inf_{B_{\eps d'_{x_{j}}}(x_{j})}\underline{u} + \zeta 2^{-j}.$$

Fix a strategy $S_{I}$ for Player I, and define a function $\Phi: \tilde{C} \times
\mathbb{R}^{n} \to \mathbb{R}$ such that
\begin{align*}
\Phi(c,x) = \left\{ \begin{array}{ll}
\underline{u}(x) & \textrm{if $  c=0$,}\\
G(x) & \textrm{if $ c=1$.}\\
\end{array} \right.
\end{align*}
Then we have
\begin{align*}
&\mathbb{E}_{S_{I},S_{II}^{*}}^{x_{0}}\big[ \Phi( c_{j+1},x_{j+1})+ \zeta 2^{-(j+1)}|\big(
(c_{0},x_{0}),\dots,(c_{j},x_{j}) \big) \big]
\\&= \mathbb{I}_{c_{j}}(\{0\})
(1- (1-\alpha)\gamma s_{\eps}(x_{j}))\bigg\{ \frac{\alpha}{2}   \bigg( \underline{u}(S_{I}(x_{j}))+    \underline{u} (S_{II}^{\ast}(x_{j})) \bigg)
\\ & \qquad  + (1-\alpha)  \kint_{B_{\eps}(x_{j})\cap \Omega}\underline{u}(y) dy \bigg\} 
+   \mathbb{I}_{c_{j}}(\{1\}) (1-\alpha)\gamma s_{\eps}(x_{j})G(x_{j}) + \zeta 2^{-(j+1)} \\ &
\le \mathbb{I}_{c_{j}}(\{0\})(1- (1-\alpha)\gamma s_{\eps}(x_{j}))
\bigg\{ \frac{\alpha}{2}  \bigg( \sup_{B_{\eps d'_{x_{j}}}(x_{j})} \underline{u}+  
\inf_{B_{\eps d'_{x_{j}}}(x_{j})} \underline{u}+\zeta 2^{-j} \bigg) \\ & \qquad + (1-\alpha)  \kint_{B_{\eps}(x_{j})\cap \Omega}\underline{u}(y) dy \bigg\}
 +  \mathbb{I}_{c_{j}}(\{1\}) (1-\alpha)\gamma s_{\eps}(x_{j})G(x_{j})  + \zeta 2^{-(j+1)}
\\& \le \Phi( c_{j},x_{j})+\zeta 2^{-j}.
\end{align*}
Thus, we see that $( \Phi( c_{k},x_{k})+\zeta 2^{-k})_{k=0}$ is a supermartingale.
By using the optimal stopping theorem, we have
\begin{align*}
u_{II}^{\eps}(x_{0})& = \inf_{S_{II}}\sup_{S_{I}}\mathbb{E}_{S_{I},S_{II}}^{x_{0}} [G(x_{\tau})]
\\ & \le \sup_{S_{I}}\mathbb{E}_{S_{I},S_{II}^{\ast}}^{x_{0}} [G(x_{\tau})] 
\\ & =\sup_{S_{I}}\mathbb{E}_{S_{I},S_{II}^{\ast}}^{x_{0}} [\Phi(c_{\tau+1},x_{\tau+1})] 
\\ & \le \sup_{S_{I}}\mathbb{E}_{S_{I},S_{II}^{\ast}}^{x_{0}} [\Phi(c_{\tau+1},x_{\tau+1})+\zeta 2^{-(\tau+1)}] 
\\ & \le \Phi(c_{0},x_{0})+\zeta
\\ & = \underline{u}(x_{0})+\zeta.
\end{align*}
Since $\zeta$ can be arbitrarily chosen, we have $ u_{II}^{\eps}(x_{0})\le \underline{u}(x_{0})$.

Similarly, we can also derive $  u_{I}^{\eps}(x_{0})\ge \overline{u}(x_{0})$. This completes the proof.
\end{proof}

Combining Lemma \ref{inin} with $\underline{u} \le \overline{u} $ in $\overline{\Omega}$,
we have $\overline{u} = u_{I}^{\eps}= u_{II}^{\eps}=\underline{u} =:u_{\eps}$.
Finally, we get the uniqueness and the continuity of the game value $u_{\eps}$.
\begin{corollary} Assume that $G \in C(\Gamma)$.  
There exists a unique function $u_{\eps}\in C(\overline{\Omega})$ satisfying \eqref{dppobp}.
\end{corollary}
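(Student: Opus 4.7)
The plan is to assemble the pieces already established. The preceding paragraph shows $\overline{u}=u_{I}^{\eps}=u_{II}^{\eps}=\underline{u}=:u_{\eps}$ by combining Lemma \ref{inin} with the inequality $\underline{u}\le \overline{u}$, which already supplies a function satisfying \eqref{dppobp} and equal to the two game values. What remains is to verify that this common function is continuous on $\overline{\Omega}$ and that it is the \emph{only} solution of \eqref{dppobp} in $C(\overline{\Omega})$.

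For continuity, I would argue via the approximating sequences used to construct $\underline{u}$ and $\overline{u}$. Setting $u_{0}\equiv -\|G\|_{L^{\infty}(\Gamma_{\eps})}$ (respectively $u_{0}\equiv \|G\|_{L^{\infty}(\Gamma_{\eps})}$), the previous lemma shows each iterate $(T_{\eps}^{G})^{k}u_{0}$ is continuous; since the iterates are monotone increasing (respectively decreasing) and uniformly bounded, Dini-type reasoning gives that $\underline{u}$ is lower semicontinuous and $\overline{u}$ is upper semicontinuous. Combined with the equality $\underline{u}=\overline{u}=u_{\eps}$ just obtained, this forces $u_{\eps}\in C(\overline{\Omega})$.

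For uniqueness, suppose $v\in C(\overline{\Omega})$ is any function satisfying $T_{\eps}^{G}v=v$. Since $\|v\|_{L^{\infty}(\overline{\Omega})}\le \|G\|_{L^{\infty}(\Gamma_{\eps})}$ (the same bound from Lemma \ref{eqbdd}), one has $-\|G\|_{L^{\infty}(\Gamma_{\eps})}\le v\le \|G\|_{L^{\infty}(\Gamma_{\eps})}$. The monotonicity of $T_{\eps}^{G}$ — immediate from its definition, because $\sup$, $\inf$, and the averaged integral are all monotone in the argument and the coefficients are nonnegative — then yields $(T_{\eps}^{G})^{k}(-\|G\|_{L^{\infty}(\Gamma_{\eps})})\le v\le (T_{\eps}^{G})^{k}(\|G\|_{L^{\infty}(\Gamma_{\eps})})$ for every $k$. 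Passing to the limit gives $\underline{u}\le v\le \overline{u}$, hence $v=u_{\eps}$.

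The mildly technical step will be the semicontinuity claim: in principle $\sup_{B_{\eps d'_{x}}(x)}$, $\inf_{B_{\eps d'_{x}}(x)}$, and $\intav_{B_{\eps}(x)\cap\Omega}$ pass reasonably through monotone convergence (in fact the infimum over an open ball of a pointwise increasing sequence is handled exactly as in the existence lemma, and the integrals by dominated convergence), so $\underline{u}$ inherits lower semicontinuity from the continuity of its predecessors plus the increasing-limit structure, and similarly for $\overline{u}$. Once semicontinuity from both sides is in place, the corollary is immediate.
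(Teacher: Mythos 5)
Your existence and continuity reasoning matches the paper's: the iterates $(T_{\eps}^{G})^{k}u_{0}$ are continuous and monotone, so the increasing limit $\underline{u}$ is lower semicontinuous and the decreasing limit $\overline{u}$ is upper semicontinuous, and the collapse $\underline{u}=\overline{u}=u_{I}^{\eps}=u_{II}^{\eps}$ (Lemma \ref{inin} together with $\underline{u}\le\overline{u}$) gives $u_{\eps}\in C(\overline{\Omega})$. One small quibble: semicontinuity of a monotone limit of continuous functions is automatic; Dini's theorem is about uniform convergence when the limit is already known to be continuous, which is not the mechanism here.

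The uniqueness step contains a genuine gap. You invoke $\|v\|_{L^{\infty}(\overline{\Omega})}\le\|G\|_{L^{\infty}(\Gamma_{\eps})}$ for an arbitrary continuous fixed point $v$ of $T_{\eps}^{G}$ and attribute it to Lemma \ref{eqbdd}. That lemma is proved only for the game values $u_{I}^{\eps},u_{II}^{\eps}$, using their explicit $\sup\inf$ and $\inf\sup$ representations as expected payoffs bounded by $\|G\|$; it says nothing about a general solution of \eqref{dppobp}, which is not given to you a priori as a game value. Without that bound you cannot initialize the sandwich $(T_{\eps}^{G})^{k}(-\|G\|)\le v\le(T_{\eps}^{G})^{k}(\|G\|)$, and your iteration does not get off the ground. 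Two ways to close the gap: (a) prove a discrete maximum principle directly from the DPP, noting that if $v$ attains its maximum $M$ at $x^{*}$ with $s_{\eps}(x^{*})>0$ then $M\le G(x^{*})$, while if $s_{\eps}(x^{*})=0$ the averaged-plus-extremal structure forces $v\equiv M$ on $B_{\eps/2}(x^{*})$, and the maximum set propagates (using connectedness of $\Omega$) until one reaches a point with $s_{\eps}>0$; or (b) observe that the supermartingale construction in the proof of Lemma \ref{inin} works verbatim with any continuous fixed point $v$ in place of $\underline{u}$ and $\overline{u}$, giving $u_{II}^{\eps}\le v\le u_{I}^{\eps}$ directly, hence $v=u_{\eps}$. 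Route (b) is what the paper implicitly does and is the cleaner fix, since the needed probabilistic machinery is already in place and no additional work beyond what is in Lemma \ref{inin}'s proof is required.
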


\section{Regularity of the value function}
\label{sec:regularity}
In this section, we are concerned with the interior and boundary regularity for DPPs relevant to \eqref{dppobp}.
The main result is the boundary H\"{o}lder estimate for the function satisfying \eqref{dppobp}, Theorem \ref{bdryreg}.
For the random walk process, this type of regularity was already studied in Section 8 of \cite{MR4684385}.
Compared to that paper, the key to the proof of boundary regularity is how to control the terms arising from tug-of-war games.
Such regularity results are essential to obtain the convergence result in the next section.

We first investigate interior regularity for functions satisfying \eqref{dppobp}.
There have been several interior regularity estimates for value functions of tug-of-war games by employing coupling arguments, for example, \cite{MR3011990,MR3623556,MR4125101}.
Here we give a proof of interior estimates for the solution to \eqref{dppobp} based on the ABP and Krylov-Safanov type theory in \cite{MR4739830,MR4657420}.
 
To this end, we denote by $\mathcal{M}(B_{\Lambda})$ the set of symmetric unit Radon measures with
support in $B_{\Lambda}$.
We also set $\mu : \mathbb{R}^{n} \to \mathcal{M}(B_{\Lambda}) $ such that
$$x \mapsto \int_{B_{\Lambda}} u(x+h)d\mu_{x} (h),$$
which defines a Borel measurable function for every Borel measurable $u:  \mathbb{R}^{n} \to \mathbb{R}$.
The following notion of extremal operators can be found in \cite{MR4739830}(cf. \cite{MR4657420}).
\begin{definition}\label{exopdef}
Let $u: \mathbb{R}^{n} \to \mathbb{R}$ be a Borel measurable bounded function.
We define the extremal Pucci type operators 
\begin{align*}
\mathcal{L}_{\eps}^{+}u(x)&
:=\frac{1}{2\eps^{2}} \bigg( \alpha \sup_{\mu \in \mathcal{M}(B_{\Lambda})} \int_{B_{\Lambda}}\delta u (x, \eps h)d\mu(h)+(1-\alpha)\kint_{B_{1}}\delta u (x, \eps h)dh\bigg)
\\ & = \frac{1}{2\eps^{2}} \bigg( \alpha \sup_{h \in B_{\Lambda}} \delta u (x, \eps h)+(1-\alpha)\kint_{B_{1}}\delta u (x, \eps h)dh\bigg)
\end{align*}
and
\begin{align*}
\mathcal{L}_{\eps}^{-}u(x)&
:=\frac{1}{2\eps^{2}} \bigg( \alpha \inf_{\mu \in \mathcal{M}(B_{\Lambda})} \int_{B_{\Lambda}}\delta u (x, \eps h)d\mu(h)+(1-\alpha)\kint_{B_{1}}\delta u (x, \eps h)dh\bigg)
\\ & = \frac{1}{2\eps^{2}} \bigg( \alpha \inf_{h \in B_{\Lambda}} \delta u (x, \eps h)+(1-\alpha)\kint_{B_{1}}\delta u (x, \eps h)dh\bigg),
\end{align*}
where $\delta u(x)=u(x+\eps h)+u(x-\eps h)-2u(x) $ for every $h \in B_{\Lambda}$.
\end{definition}
We also present the following H\"{o}lder regularity result for such extremal operators.
For the proof, we refer to \cite{MR4565418,MR4496902}.
\begin{lemma}\label{hestpeo}
There exists $\eps_{0}>0$ such that if $u$ satisfies $ \mathcal{L}_{\eps}^{+}u\ge -\rho$
and  $ \mathcal{L}_{\eps}^{-}u\le \rho$ in $B_{R}$ where $\eps <\eps_{0}R$,
there exist $C>0,\sigma \in (0,1)$ such that
\begin{align*}
|u(x)-u(z)|\le \frac{C}{R^{\sigma}}\big( ||u||_{L^{\infty}(B_{R})}+R^{2}\rho \big)
(|x-z|^{\sigma}+\eps^{\sigma})
\end{align*} 
for every $x,z\in B_{R/2}$.
\end{lemma}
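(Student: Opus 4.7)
The plan is to follow the discrete Krylov--Safonov program for nonlocal Pucci-type operators, adapted to the mixed tug-of-war / random-walk form of $\mathcal{L}_\eps^{\pm}$. By the rescaling $x \mapsto x/R$ and by replacing $u$ with $u/(\|u\|_{L^\infty(B_R)} + R^2 \rho)$, I would first reduce to the case $R = 1$, $\rho \leq 1$, $\|u\|_{L^\infty(B_1)} \leq 1$, and aim to prove an oscillation decay of the form $\mathrm{osc}_{B_r}\, u \leq C(r^\sigma + \eps^\sigma)$ for a small exponent $\sigma > 0$ depending on $n,\alpha,\Lambda$.

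The core of the proof is a discrete weak Harnack / measure-decay estimate: if $u \geq 0$ in $B_{2}$, $\mathcal{L}_\eps^{-}u \leq 1$, and $\eps < \eps_0$, then there exist $\mu,\nu \in (0,1)$ such that either $\inf_{B_{1/2}} u \geq \mu$ or $|\{u \leq 1\} \cap B_{1/2}| \leq (1-\nu)|B_{1/2}|$. This is the classical ``growth lemma'' in the discrete setting and, once available, yields $\mathrm{osc}_{B_r} u \leq \theta\,\mathrm{osc}_{B_{2r}}u + C\rho\, r^2$ for some $\theta \in (0,1)$ and all dyadic scales $r \gtrsim \eps$; summing the geometric series then produces the stated H\"older bound, with the additive $\eps^\sigma$ term arising precisely because the iteration must terminate at the scale~$\eps$.

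To prove the measure estimate, I would proceed through a discrete ABP-type inequality for $\mathcal{L}_\eps^{\pm}$: for a suitable discrete concave/convex envelope $\Gamma_u$ of $-u^-$ one shows that the contact set $\{u = \Gamma_u\}$ controls $\inf u$ by a weighted sum (an $L^n$-like quantity) associated with the operator. The random-walk part is handled along the lines of the Caffarelli--Silvestre nonlocal ABP, while the tug-of-war piece is absorbed by choosing the extremal measure $\mu \in \mathcal{M}(B_\Lambda)$ in Definition~\ref{exopdef} concentrated on the favorable direction, using that $u$ is simultaneously a sub- and supersolution. Combining this ABP with a discrete Calder\'on--Zygmund dyadic decomposition stopped at scale $\eps$, and iterating, gives the required $L^\eps$-decay of super-level sets and hence the weak Harnack inequality.

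The main obstacle, and the reason for invoking \cite{MR4565418,MR4496902}, is the discrete ABP with the tug-of-war term: unlike the purely nonlocal Caffarelli--Silvestre setting, $\mathcal{L}_\eps^{\pm}$ contains an $\sup$/$\inf$ contribution that is not a linear integral against a probability measure, and one has to argue geometrically that the extremal $\mu \in \mathcal{M}(B_\Lambda)$ can always be taken so as to ``see'' the concave envelope through the tug-of-war step, i.e.\ that the sup part does not destroy the concavity estimate at contact points. A secondary technical nuisance is a careful bookkeeping of $\eps$-scale errors in the Calder\'on--Zygmund decomposition, which is ultimately responsible for the additive $\eps^\sigma$ in the final bound rather than a pure $|x-z|^\sigma$ estimate.
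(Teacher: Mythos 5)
The paper does not actually reproduce a proof of this lemma; it delegates entirely to \cite{MR4565418,MR4496902}, and your sketch correctly identifies the discrete Krylov--Safonov program (rescale to $R=1$, measure-decay/weak-Harnack step, oscillation-decay iteration stopped at scale $\eps$ producing the additive $\eps^\sigma$) that those references carry out, including the key obstacle of absorbing the tug-of-war $\sup/\inf$ contribution into the measure estimate. So the approach is essentially the same as the paper's; the only caveat is that yours is a plan rather than a worked proof, and the heart of the matter---the measure estimate, whether obtained via a discrete ABP/Calder\'on--Zygmund argument as you propose or via the barrier/expansion-of-positivity route emphasized in \cite{MR4565418}---is precisely where the real work of those references lies.
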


Now we show interior regularity for the value function $u_{\eps}$ satisfying \eqref{dppobp} based on Lemma \ref{hestpeo} and the following observation. 
Note that \eqref{dppobp} can be written as 
$$ u_{\eps}(x)=\frac{\alpha}{2}\bigg( \sup_{B_{\frac{\eps}{2}}(x)} u_{\eps}+  
\inf_{B_{\frac{\eps}{2}}(x)} u_{\eps} \bigg) + (1-\alpha)  \kint_{B_{\eps }(x)}u_{\eps}(y) dy$$
for $x\in \Omega\backslash \Gamma_{\eps}$.
For any unit vector $\nu$ in $\mathbb{R}^{n}$, we define a measure $\mu$  by 
$$\mu_{\nu}(E)=\frac{1}{2}(\delta_{E\cap \{ x+\eps\nu/2 \}} +\delta_{E\cap \{ x-\eps\nu/2 \}})$$
and 
\begin{align*}
I_{\eps}^{\nu}u_{\eps}(x)=\alpha u_{\eps}\bigg(x+\frac{\eps}{2}\nu\bigg)+ (1- \alpha)\kint_{B_{\eps}(x)}u_{\eps}(y)dy.
\end{align*}
Then, we observe
\begin{align*}
\alpha  \int_{B_{1}} u_{\eps} (x+ \eps h)d\mu_{\nu}(h)+(1-\alpha)\kint_{B_{1}} u_{\eps} (x+\eps h)dh
=\frac{1}{2}\big( I_{\eps}^{\nu}u_{\eps} (x)+  I_{\eps}^{-\nu}u_{\eps}(x)  \big)
\end{align*} 
and this implies
\begin{align*}
\alpha  \int_{B_{1}}\delta u_{\eps} (x, \eps h)d\mu_{\nu}(h)+(1-\alpha)\kint_{B_{1}} \delta u_{\eps} (x,\eps h)dh
= I_{\eps}^{\nu}u_{\eps} (x)+  I_{\eps}^{-\nu}u_{\eps}(x) -2u_{\eps}(x).
\end{align*} 

Now we get
\begin{align*}
\mathcal{L}_{\eps}^{-}u_{\eps}(x)\le
\frac{I_{\eps}^{\nu}u_{\eps} (x)+ I_{\eps}^{-\nu}u_{\eps}(x) - 2u_{\eps}(x) }{2\eps^{2}} \le 
\mathcal{L}_{\eps}^{+}u_{\eps}(x),
\end{align*}
and hence, we observe that
\begin{align*} 
0&=\frac{1}{2\eps^{2}}\bigg( \sup_{|\nu|=1} I_{\eps}^{\nu}u_{\eps} (x)+ \inf_{|\nu|=1} I_{\eps}^{\nu}u_{\eps}(x) - 2u_{\eps}(x)\bigg) 
\\ & \le  \frac{1}{2\eps^{2}} \sup_{|\nu|=1}\big( I_{\eps}^{\nu}u_{\eps} (x)+  I_{\eps}^{-\nu}u_{\eps}(x) - 2u_{\eps}(x) \big) 
\le \mathcal{L}_{\eps}^{+}u_{\eps}(x) .
\end{align*}
Similarly, we also get
$$ \mathcal{L}_{\eps}^{-}u_{\eps}(x)\le \frac{1}{2\eps^{2}} \inf_{|\nu|=1}\big(I_{\eps}^{\nu}u_{\eps} (x)+ I_{\eps}^{-\nu}u_{\eps}(x) - 2u_{\eps}(x) \big)  \le 0.$$
Therefore, we can derive the following interior H\"{o}lder estimate (cf. \cite{MR3846232,MR4125101}).
\begin{lemma}\label{intlip}Let $\Omega$ be a bounded domain and $B_{2r}(x_{0})\subset \Omega$ for some $R>0$.
Then, for the function satisfying \eqref{dppobp}, there exist $C>0$ and $\sigma\in (0,1)$ both independent of $\eps$
so that 
\begin{align} \label{intest}|u_{\eps}(x)-u_{\eps}(z)| \le C||u||_{L^{\infty}(B_{2R})}\bigg( \frac{|x-z|^{\sigma}}{R^{\sigma}}+
\frac{\eps^{\sigma}}{R^{\sigma}} \bigg) 
\end{align} for each $x,z\in B_{R}(x_{0})$. 
\end{lemma}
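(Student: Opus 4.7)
The plan is to invoke the black-box H\"older estimate for the extremal Pucci-type operators, Lemma \ref{hestpeo}, after checking that $u_\eps$ satisfies the interior DPP in the form \eqref{towdpp} on a ball only slightly smaller than $B_{2R}(x_0)$ and hence bounds $\mathcal{L}_\eps^\pm u_\eps$ from the right side. Concretely, since $B_{2R}(x_0)\subset\Omega$, for any $y$ in the concentric ball $B_{2R-\eps}(x_0)$ we have $\dist(y,\partial\Omega)\ge\eps$, so $d_\eps(y)=1$ and therefore $s_\eps(y)=0$ and $d'_y=\tfrac12$. On this inner ball the DPP \eqref{dppobp} collapses to the ``pure'' tug-of-war-with-noise relation \eqref{towdpp}.

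With the simplified DPP in hand, I would feed the family of symmetric measures $\mu_\nu$ (the two-point mass at $x\pm\tfrac\eps2\nu$) into the definition of $\mathcal{L}_\eps^\pm$. The computation already carried out just above the statement of Lemma \ref{intlip} shows
\begin{align*}
\mathcal{L}_\eps^- u_\eps(x)\le\frac{1}{2\eps^2}\bigl(I_\eps^\nu u_\eps(x)+I_\eps^{-\nu} u_\eps(x)-2u_\eps(x)\bigr)\le \mathcal{L}_\eps^+ u_\eps(x),
\end{align*}
and combining with \eqref{towdpp} (so that the $\sup$--$\inf$ in $\nu$ of the middle quantity both equal $0$) yields $\mathcal{L}_\eps^- u_\eps(x)\le 0\le \mathcal{L}_\eps^+ u_\eps(x)$ for every $x\in B_{2R-\eps}(x_0)$.

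Now I would apply Lemma \ref{hestpeo} with $\rho=0$ on the ball $B_{R'}(x_0)$ of radius $R'=2R-\eps$, which is $\ge R$ whenever $\eps\le R$, and with $\eps<\eps_0 R'$ automatic for $\eps$ small compared to $R$. The conclusion of that lemma gives
\begin{align*}
|u_\eps(x)-u_\eps(z)|\le \frac{C}{(R')^\sigma}\,\|u_\eps\|_{L^\infty(B_{R'})}(|x-z|^\sigma+\eps^\sigma)
\end{align*}
for all $x,z\in B_{R'/2}(x_0)\supset B_{R/2}(x_0)$. Absorbing factors of $2$ into $C$, and extending from $B_{R/2}$ to $B_R$ by a standard chaining along a sequence of overlapping interior balls (or, equivalently, by applying the lemma with the slightly enlarged radius $2R-\eps$ in place of $R$ throughout), produces the desired inequality \eqref{intest}.

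The main obstacle is genuinely administrative rather than analytic: one has to arrange the radii so that the interior-DPP simplification holds on a ball large enough that Lemma \ref{hestpeo} delivers an estimate on $B_R(x_0)$ and not merely on $B_{R/2}(x_0)$, while keeping $\eps$ strictly below the threshold $\eps_0 R'$. The condition $\eps\ll R$ baked into the error term $(\eps/R)^\sigma$ in \eqref{intest} is precisely what makes this bookkeeping harmless; all of the PDE-side content has already been done in the two displays preceding the lemma statement.
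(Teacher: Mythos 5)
Your proposal matches the paper's intended argument: the paper derives $\mathcal{L}_{\eps}^{-}u_{\eps}\le 0\le \mathcal{L}_{\eps}^{+}u_{\eps}$ from \eqref{towdpp} via the two-point measures $\mu_{\nu}$ in the displays immediately preceding the lemma, and then states Lemma \ref{intlip} as a direct consequence of the Krylov--Safonov-type estimate in Lemma \ref{hestpeo} without further proof. You have simply made that citation explicit, correctly noting that $s_{\eps}\equiv 0$ and $d'_{\cdot}\equiv\tfrac12$ on $B_{2R-\eps}(x_{0})$ so the DPP collapses to \eqref{towdpp} there, and handling the radius bookkeeping (which the paper suppresses) by chaining/covering.
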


We state the main theorem in this section, which gives a boundary regularity estimate for \eqref{dppobp}. 
\begin{theorem}\label{bdryreg}
Let $u_{\eps}$ be the function satisfying \eqref{dppobp} with $G \in C(\Gamma_{\eps})$ for each $\eps >0$, and $\sigma \in (0,1)$ as in Lemma \ref{intlip}. 
There exists $\delta_{0}\in(0,1)$ such that for every $\delta \in (0,\delta_{0})$ and $x_{0},y_{0} \in \overline{\Omega}$ 
with $|x_{0}-y_{0}|\le \delta$ and $$\dist (x_{0},\partial \Omega),\dist (y_{0},\partial \Omega)\le \delta^{1/2},$$ we have
\begin{align*}
|u_{\eps}(x_{0})-u_{\eps}(y_{0})|\le C  ||G||_{L^{\infty}(\Gamma_{\eps})}\delta^{\sigma/2} 
\end{align*}
for some $C$ depending on $n, \alpha, \gamma, \sigma$ and $\Omega$ and $\eps << \delta$.
\end{theorem}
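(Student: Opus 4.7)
The plan is to couple two games, starting at $x_0$ and $y_0$ respectively, and to analyze the coupled process up to a stopping time $\tau^\ast$ which records whether either token has escaped the boundary strip or the game has terminated. The key ingredients are a cancellation argument for the tug-of-war contributions in the spirit of \cite{MR3011990}, the geometric estimate in Lemma~\ref{mcs} giving a favourable drift toward $\partial\Omega$, and the stopping-time bound of Lemma~\ref{esttaus}. Once these are in hand, the interior estimate of Lemma~\ref{intlip} handles the case that both tokens escape into the bulk, while the boundary data $G$ controls the case of termination.

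First, I would fix the intermediate scale $r=\delta^{1/2}$ and set $\tau^\delta$ to be the first index at which $\max\{\dist(X_k,\partial\Omega),\dist(Y_k,\partial\Omega)\}\ge 2\delta^{1/2}$, and let $\tau^\ast = \tau\wedge\tau^\delta$. Under the coupled law one can write
\[
u_\eps(x_0) - u_\eps(y_0) = \mathbb{E}\bigl[ u_\eps(X_{\tau^\ast}) - u_\eps(Y_{\tau^\ast}) \bigr].
\]
On the event $\{\tau^\ast = \tau\}$ the game terminates at the boundary, and the difference is $G(X_\tau)-G(Y_\tau)$, controlled via $\|G\|_{L^\infty(\Gamma_\eps)}$ times the probability that the tokens have separated substantially. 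On the complementary event $\{\tau^\delta<\tau\}$ both tokens lie at depth at least $\delta^{1/2}$ from $\partial\Omega$, so Lemma~\ref{intlip} applied at scale $R\simeq\delta^{1/2}$ gives
\[
|u_\eps(X_{\tau^\delta}) - u_\eps(Y_{\tau^\delta})| \le C\|G\|_{L^\infty(\Gamma_\eps)} \Bigl( \frac{|X_{\tau^\delta} - Y_{\tau^\delta}|^\sigma}{\delta^{\sigma/2}} + \frac{\eps^\sigma}{\delta^{\sigma/2}} \Bigr),
\]
so the task reduces to bounding $\mathbb{E}|X_{\tau^\ast} - Y_{\tau^\ast}|^\sigma$ by a constant multiple of $\delta^\sigma$ and showing $\mathbb{P}(\tau^\delta<\tau)\le C\delta^{\sigma/2}$.

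To carry this out I would adopt a cancellation coupling: fix Player~I's strategy in game one and let Player~II in game two mirror it, and symmetrically for the other pair. The two tug-of-war moves then cancel exactly when $d'_{X_k}=d'_{Y_k}$, and the random-walk samples in $B_\eps(X_k)\cap\Omega$ and $B_\eps(Y_k)\cap\Omega$ can be coupled to differ only on a set of measure $O(s_\eps(X_k))+O(s_\eps(Y_k))$. The identity \eqref{bdryintav} shows that this surplus drifts each token inward by an amount proportional to $\eps s_\eps(X_k)\mathbf{n}$, which combines with an appropriate power of $\dist(\cdot,\partial\Omega)$ to form a supermartingale; a Dynkin-type computation then yields $\mathbb{P}(\tau^\delta<\tau)\le C\delta^{\sigma/2}$. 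Combining this with the expected stopping-time bound of Lemma~\ref{esttaus} controls the number of coupled steps before $\tau^\ast$ and delivers the desired moment bound on $|X_{\tau^\ast}-Y_{\tau^\ast}|$.

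The hard part, and the reason for the delicate submartingale construction announced in the introduction, is that the cancellation is only approximate whenever $d'_{X_k}\neq d'_{Y_k}$: if one token is much closer to the boundary than the other, the mirror move in the second game does not fully cancel the tug-of-war step in the first, and leaves a residual of order $\eps|d'_{X_k}-d'_{Y_k}|$, which in turn is comparable to $|X_k-Y_k|$. This residual must be absorbed by the inward $s_\eps$-drift through the choice of an auxiliary barrier function, and it is at this point that Lemma~\ref{mcs} intervenes in an essential way. This is precisely the qualitative novelty compared to the random-walk argument in \cite{MR4684385}, where rotational invariance of the sampling ball provides the cancellation for free; here it has to be engineered by hand.
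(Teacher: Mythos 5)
Your proposal identifies the correct ingredients (coupling with cancellation, the discounted supermartingale $|X_k-Y_k|^2e^{-CS_k}$, the stopping-time bound of Lemma~\ref{esttaus}, and the interior estimate at scale $\delta^{1/2}$), and the overall three-part structure is recognisably the same as the paper's. However, there are several concrete gaps.

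First, the claimed identity $u_\eps(x_0)-u_\eps(y_0)=\mathbb{E}\bigl[u_\eps(X_{\tau^\ast})-u_\eps(Y_{\tau^\ast})\bigr]$ is not true: the DPP carries the discount factor $1-\gamma s_\eps$ at every step, so stopping at a deterministic escape time and evaluating $u_\eps$ there leaves an error. The paper works exclusively with the escape time $\overline{\tau}$ of the \emph{alternative} non-terminating game, and it spends an entire preliminary step constructing the martingale
$M_k=u_\eps(X_k)\prod_{i=1}^k(1-\gamma s_\eps(X_{i-1}))+\gamma\sum_{j=1}^{k-1}s_\eps(X_j)G(X_j)\prod_{i=1}^j(1-\gamma s_\eps(X_{j-1}))$
to show that the correction $|u_\eps(x_0)-\sup_{S_I}\inf_{S_{II}}\mathbb{E}[u_\eps(X_{\overline{\tau}})]|$ is $O(\|G\|_{L^\infty}\delta^{1/2})$. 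You cannot skip this; it is exactly the contribution of the accumulated discounting, controlled via Lemma~\ref{esttaus}.

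Second, your description of how the tug-of-war residual is absorbed is off. You write that the residual $\eps|d'_{X_k}-d'_{Y_k}|$ is ``comparable to $|X_k-Y_k|$'' and must be absorbed by an inward drift via Lemma~\ref{mcs}. Neither is how it works. After taking squares, the cancellation leaves exactly $\eps^2(d'_{X_k}-d'_{Y_k})^2\le\eps^2/4$, and this is absorbed by the elementary observation that $d'_{X_k}\neq d'_{Y_k}$ forces at least one of the tokens into $\Gamma_{\eps/2}$, where $s_\eps\gtrsim\eps$. Hence $\eps^2/4\le C_0\eps(s_\eps(X_k)+s_\eps(Y_k))$, and the residual folds into the same $s_\eps$-terms as the random-walk contribution. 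Lemma~\ref{mcs} is used only \emph{inside} the proof of Lemma~\ref{esttaus}; it does not enter the supermartingale for the coupled process.

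Third, the probability estimate is stated backwards: $\{\tau^\delta<\tau\}$ is the favourable event (both tokens escape before termination) and should have probability close to $1$, not $\le C\delta^{\sigma/2}$. What the paper controls is $\mathbb{P}(S_{\overline{\tau}}^{\eps,x_0}+S_{\overline{\tau}}^{\eps,y_0}>1)\le C\delta^{1/2}$ by Markov's inequality and Lemma~\ref{esttaus}; together with the moment bound $\mathbb{E}[|X_{\overline{\tau}}-Y_{\overline{\tau}}|^2\mathds{1}_{\{S\le1\}}]\le C\delta^2$ (from the discounted supermartingale), this gives all three pieces of the decomposition \eqref{decomp}. You should rework these bounds before the argument can close.
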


There have been several preceding boundary regularity results for value functions such as \cite{MR3011990,MR3494400,MR4299842}.
In these papers, constructing suitable submartingales is a key step to deriving the desired boundary estimate.
For instance, the estimate of stopping time for the game played an important role in the proof of the boundary regularity in \cite{MR3011990}. 
To this end, they constructed an alternative stochastic game and estimated its stopping time.
We want to apply a similar argument to our problem here.
Recall the definition of the function $Z^{\rho}$ in Section \ref{subsec:mvc}.
The following geometrical observation is preparatory work for that.

\begin{lemma}  \label{mcs}
Let $\Omega$ be a domain satisfying the interior ball condition with the radius $\rho>0$.
Fix $r \in (0,\frac{\rho}{2})$ and $x_{0}\in \Gamma_{r}$.
Then there exists a constant $C_{0}>0$ depending on $n, q, \rho$ and $\alpha$ such that for any sufficiently small
$\eps>0$,
\begin{align} \label{lpmo} \begin{split}
\frac{\alpha}{2}\big(|x_{0}-y_{0}|-\eps d'_{x_{0}}\big)^{-q}&\hspace{-0.5em} + \frac{\alpha}{2}\big(|x_{0}-y_{0}|+\eps d'_{x_{0}}\big)^{-q}\hspace{-0.5em} + (1-\alpha)
\kint_{B_{\eps}(x_{0})\cap \Omega}|z-y_{0}|^{-q}dz \\ & \ge |x_{0}-y_{0}|^{-q}+C_{0}(s_{\eps}(x_{0})+\eps^{2}), \end{split}
\end{align}
where $q>n-2$ and $y_{0}=Z^{\rho}(x_{0}) $. 
\end{lemma}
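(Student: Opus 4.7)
The plan is to apply a second-order Taylor expansion to the test function $f(z):=|z-y_{0}|^{-q}$ around $z=x_{0}$, interpreting the left-hand side of \eqref{lpmo} as the mean-value expression from the DPP evaluated on $f$. Writing $r:=|x_{0}-y_{0}|$ and $\nu:=(x_{0}-y_{0})/r$, the key geometric observation is that $\nu$ coincides with the outward unit normal $\mathbf{n}(\pi_{\partial\Omega}(x_{0}))$, because $x_{0}$, $y_{0}=Z^{\rho}(x_{0})$ and $\pi_{\partial\Omega}(x_{0})$ are collinear along a radius of the tangent interior ball of radius $\rho$; moreover $r\in(\rho/2,\rho]$ is uniformly bounded away from $0$ and from $\infty$.

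Since $f$ is radially decreasing in $|z-y_{0}|$, its extrema on $B_{\eps d'_{x_{0}}}(x_{0})$ are attained at $x_{0}\mp\eps d'_{x_{0}}\nu$, so the tug-of-war part equals $\tfrac{\alpha}{2}[(r-\eps d'_{x_{0}})^{-q}+(r+\eps d'_{x_{0}})^{-q}]$, and a one-variable Taylor expansion of $s\mapsto s^{-q}$ at $s=r$ gives
$$ \tfrac{1}{2}\bigl[(r-\eps d'_{x_{0}})^{-q}+(r+\eps d'_{x_{0}})^{-q}\bigr]=r^{-q}+\tfrac{q(q+1)}{2}r^{-q-2}(\eps d'_{x_{0}})^{2}+O(\eps^{4}). $$
For the averaging part I would expand $f$ to second order around $x_{0}$ and invoke \eqref{bdryintav} and \eqref{secorder}. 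Using $\nabla f(x_{0})=-qr^{-q-1}\nu$, the first-order contribution becomes $\langle\nabla f(x_{0}),-s_{\eps}(x_{0})\nu\rangle=qr^{-q-1}s_{\eps}(x_{0})$, strictly positive because $\nabla f(x_{0})$ and the center-of-mass shift $\kint_{B_{\eps}(x_{0})\cap\Omega}(z-x_{0})\,dz$ are both anti-parallel to $\nu$. Using $\Delta f(x_{0})=q(q+2-n)r^{-q-2}$, positive since $q>n-2$, the second-order contribution becomes $\tfrac{q(q+2-n)}{2(n+2)}r^{-q-2}\eps^{2}$, with a remainder of order $O(\eps s_{\eps}(x_{0}))+o(\eps^{2})$.

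Combining the two estimates after multiplication by $\alpha$ and $1-\alpha$ respectively, the left-hand side of \eqref{lpmo} is bounded below by
$$ r^{-q}+(1-\alpha)qr^{-q-1}s_{\eps}(x_{0})+\Bigl[\tfrac{\alpha q(q+1)}{2}(d'_{x_{0}})^{2}+\tfrac{(1-\alpha)q(q+2-n)}{2(n+2)}\Bigr]r^{-q-2}\eps^{2}+O(\eps s_{\eps}(x_{0}))+o(\eps^{2}). $$
The $s_{\eps}$-coefficient is uniformly positive since $r\le\rho$, and the bracketed $\eps^{2}$-coefficient is uniformly positive because the hypothesis $q>n-2$ keeps the pure averaging contribution strictly positive. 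For sufficiently small $\eps$, the error $O(\eps s_{\eps}(x_{0}))+o(\eps^{2})$ is absorbed into half of these leading contributions, yielding \eqref{lpmo} with $C_{0}$ depending only on $n,q,\rho,\alpha$. The main subtlety in my view lies precisely in this last step: when $x_{0}$ sits extremely close to $\partial\Omega$ the tug-of-war factor $(d'_{x_{0}})^{2}$ degenerates to $0$, so one cannot rely on the tug-of-war to produce an $\eps^{2}$-term; it is essential that the condition $q>n-2$ keeps the pure-averaging contribution coercive in $\eps^{2}$ uniformly across all regimes of $\dist(x_{0},\partial\Omega)$.
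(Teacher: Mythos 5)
Your proposal is correct and follows essentially the same route as the paper's proof: Taylor-expand $\phi(z)=|z-y_0|^{-q}$, treat the tug-of-war pair via a one-dimensional expansion of $s\mapsto s^{-q}$, treat the average via \eqref{bdryintav} and \eqref{secorder}, and absorb the $O(\eps s_{\eps})+o(\eps^2)$ remainder for small $\eps$. The only stylistic difference is that the paper simply discards the nonnegative $\eps^2(d'_{x_0})^2$ contribution from the tug-of-war term and extracts the full positivity $C_1(s_\eps+\eps^2)$ from the averaging part alone (taking $C_0=(1-\alpha)C_1$), whereas you keep the tug-of-war $\eps^2$-coefficient and then correctly observe that it degenerates when $d'_{x_0}\to 0$, so the condition $q>n-2$ is what guarantees uniform coercivity — a point the paper leaves implicit.
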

\begin{proof}
The basic idea of this proof comes from the convexity of the function $x \mapsto |x|^{-q}$. 
By Taylor expansion, we observe that
$$|z-y_{0}|^{-q}= |x_{0}-y_{0}|^{-q}+\langle D\phi(x_{0}), z-x_{0} \rangle
+\frac{1}{2}\langle D^{2}\phi (x_{0})(z-x_{0}),z-x_{0} \rangle + o(\eps^{2}) ,$$
where
$\phi(z)=|z-y_{0}|^{-q}$.
We also see that
$$ D\phi(x_{0})= -q|x_{0}-y_{0}|^{-q-2}(x_{0}-y_{0})$$
and
$$ D^{2}\phi(x_{0})=q|x_{0}-y_{0}|^{-q-4}\big( (q+2)(x_{0}-y_{0})\otimes
(x_{0}-y_{0})-|x_{0}-y_{0}|^{2}I_{n} \big) .$$

Then, we get
\begin{align}\label{mrge}\begin{split}
&\frac{1}{2}\big(|x_{0}-y_{0}|-\eps d'_{x_{0}}\big)^{-q} + \frac{1}{2}\big(|x_{0}-y_{0}|+\eps d'_{x_{0}}\big)^{-q} 
\\ & = |x_{0}-y_{0}|^{-q}+\frac{q(q+1)}{2}(\eps d'_{x_{0}})^{2} |x_{0}-y_{0}|^{-q-2}+o(\eps^{2})  \ge  |x_{0}-y_{0}|^{-q}\end{split}
\end{align}
and
\begin{align}\label{fote}\begin{split}
&\kint_{B_{\eps}(x_{0})\cap \Omega}|z-y_{0}|^{-q}dz
\\& = |x_{0}-y_{0}|^{-q}+\kint_{B_{\eps}(x_{0})\cap \Omega}\langle D\phi(x_{0}), z-x_{0} \rangle dz \\ & \qquad \quad
+\kint_{B_{\eps}(x_{0})\cap \Omega}\frac{1}{2}\langle D^{2}\phi (x_{0})(z-x_{0}),z-x_{0} \rangle dz + o(\eps^{2})
\\& \ge |x_{0}-y_{0}|^{-q}+ C_{1}(s_{\eps}(x_{0})+\eps^{2}),\end{split}
\end{align}
for some $C_{1}>0$ depending on $q,n$ and $ \rho$.
We remark that the last inequality is obtained by
\begin{align*}
&\kint_{B_{\eps}(x_{0})\cap \Omega}\langle D\phi(x_{0}), z-x_{0} \rangle dz
\\ & = -q|x_{0}-y_{0}|^{-q-2} \bigg\langle x_{0}-y_{0}
, \kint_{B_{\eps}(x_{0})\cap \Omega} (z-x_{0} ) dz \bigg\rangle 
\\ & = q|x_{0}-y_{0}|^{-q-1} \bigg\langle   \frac{x_{0}-y_{0}}{|x_{0}-y_{0}|}, \mathbf{n}(\pi_{\partial \Omega}(x_{0}))  \bigg\rangle s_{\eps}(x_{0}) + O(\eps s_{\eps}(x_{0}))
\ge C_{1} s_{\eps}(x_{0})
\end{align*}
and
\begin{align*}
&\kint_{B_{\eps}(x_{0})\cap \Omega} \langle D^{2}\phi (x_{0})(z-x_{0}),z-x_{0} \rangle  dz
\\ & =q|x_{0}-y_{0}|^{-q-4} \times \\ & \quad \bigg\langle (q+2)(x_{0}-y_{0})\otimes
(x_{0}-y_{0})-|x_{0}-y_{0}|^{2}I_{n} : \kint_{B_{\eps}(x_{0})\cap \Omega}  (z-x_{0})\otimes(z-x_{0} )dz\bigg\rangle
\\ & = \frac{q-n+2}{n+2}|x_{0}-y_{0}|^{2}\eps^{2} + O(\eps s_{\eps}(x_{0})) \ge C_{1}\eps^{2},
\end{align*}
provided $q>n-2$ (we can always choose such constant $C_{1}>0$). 

We finally derive \eqref{lpmo} by choosing $C_{0}= (1-\alpha)C_{1}$.
\end{proof}

Now we consider an alternative game with a slightly different setting from the original game. 
The basic setting is the same as before, but we do not finish the game even if the token is located in $\Gamma_{\eps}$.
Instead, the game must end when the token moves far enough away from the boundary.
We define the stopping time $\overline{\tau}^{\eps,\rho,h,x_{0}}$ and a sequence of random
variables $\{S_{k}^{\eps , x_{0}} \}_{k=0}^{\infty}$ by
$$\overline{\tau}^{\eps,\rho,h,x_{0}}=\min\{ k\ge 0 : |Z^{\rho}(X_{k})-X_{k}|<\rho- h \} \qquad \textrm{and} \qquad S_{k}^{\eps , x_{0}}= \sum_{j=1}^{k}s_{\eps}(X_{k}) .$$
In \cite[Lemma 8.3]{MR4684385}, we can find an estimate for $\overline{\tau}^{\eps,\rho,h,x_{0}}$ and $S_{\overline{\tau}^{\eps,\rho,h,x_{0}}}^{\eps , x_{0}}$ when $p=2.$
The following lemma is a sort of corresponding result for those stopping times under our setting.
We derive an estimate for $\overline{\tau}^{\eps,\rho,h,x_{0}}$ and $S_{\overline{\tau}^{\eps,\rho,h,x_{0}}}^{\eps , x_{0}}$ under a fixed strategy for Player II, which intuitively appears to minimize them.

\begin{lemma}\label{esttaus}Let $\overline{r}<r $ with $r$ as above.
Assume that $|x_{0}-Z^{\rho}(x_{0})  |>\rho-h$ for $h \in (0, \frac{\overline{r}}{2}-\eps)$ and we fix the strategy $S_{II}^{\ast}$ to pull towards $Z_{\rho}(X_{k})$ for Player II.
Then for every small $\eps>0$, $x_{0}\in \overline{\Omega}$, we have
\begin{align}
\sup_{S_{I}}\mathbb{E}_{S_{I},S_{II}^{\ast}}^{x_{0}}[\eps^{2}\overline{\tau}^{\eps,\rho,h,x_{0}}+S_{\overline{\tau}^{\eps,\rho,h,x_{0}}}^{\eps , x_{0}}] \le C\overline{r}^{-n-1}(h+\eps)
\end{align}
for some constant $C$ depending on $n, \alpha $ and $\rho$.
\end{lemma}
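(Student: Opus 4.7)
The plan is to build an explicit submartingale from the potential $\phi(x) := |x - y_0|^{-q}$, where $y_0 := Z^\rho(x_0)$ is held \emph{fixed} throughout the argument and $q > n-2$ is chosen so that Lemma~\ref{mcs} applies, and then invoke the optional stopping theorem. First I would establish a localized, ``uniform in the running point'' version of Lemma~\ref{mcs}: on the region $U := B_{\bar r}(x_0) \cap I_r$, the inner product $\langle (x - y_0)/|x - y_0|, \mathbf{n}(\pi_{\partial \Omega}(x))\rangle$ stays bounded below by a positive constant provided $\bar r \ll \rho$, so the Taylor expansion in the proof of Lemma~\ref{mcs} reproduces the drift $C_0 (s_\eps(x) + \eps^2)$ uniformly for $x \in U$, with a constant $C_0$ depending only on $n, \alpha, q, \rho$ and this geometric lower bound.

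Next I would introduce the truncated stopping time $\tau' := \bar\tau^{\eps,\rho,h,x_0} \wedge \inf\{k : X_k \notin B_{\bar r}(x_0)\}$ and the process $M_k := \phi(X_{k \wedge \tau'}) - C_0 \eps^2 (k \wedge \tau') - C_0 S_{k \wedge \tau'}^{\eps, x_0}$. Using Step~1 together with a cancellation argument in the spirit of \cite{MR3011990}, I would show that for every Player~I strategy $S_I$ and Player~II's fixed strategy $S_{II}^*$,
\[
\mathbb{E}_{S_I, S_{II}^*}\!\bigl[\phi(X_{k+1}) - \phi(X_k) \,\big|\, \mathcal{F}_k \bigr] \;\ge\; C_0 \bigl(s_\eps(X_k) + \eps^2\bigr) \quad \text{on } \{k < \tau'\},
\]
so that $(M_k)$ is a bounded submartingale. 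Optional stopping then yields
\[
C_0 \sup_{S_I} \mathbb{E}_{S_I, S_{II}^*}^{x_0}\!\bigl[\eps^2 \tau' + S_{\tau'}^{\eps, x_0}\bigr] \;\le\; \sup_{S_I} \mathbb{E}_{S_I, S_{II}^*}^{x_0}\!\bigl[\phi(X_{\tau'}) - \phi(x_0)\bigr].
\]

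The terminal expectation is bounded by splitting on $\{\tau' = \bar\tau\}$ versus $\{\tau' < \bar\tau\}$. On the first event, the defining inequality $|Z^\rho(X_{\bar\tau}) - X_{\bar\tau}| < \rho - h$ together with the $\eps$-sized step size forces $|X_{\bar\tau} - y_0| \ge \rho - h - O(\eps)$, so a one-sided Taylor expansion of $\phi$ controls $\phi(X_{\bar\tau}) - \phi(x_0)$ by a constant multiple of $h + \eps$. On the second event, a geometric estimate gives $|X_{\tau_{\bar r}} - y_0| \gtrsim \bar r$, so $\phi(X_{\tau_{\bar r}}) \lesssim \bar r^{-q}$; weighting by the (small) probability of exiting $B_{\bar r}(x_0)$ before $\bar\tau$ and optimizing $q$ near $n-2$ produces the prefactor $\bar r^{-n-1}$. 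Since $\bar\tau \le \tau'$ and $S_{\bar\tau} \le S_{\tau'}$, the claimed bound follows.

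The main obstacle is the submartingale inequality in Step~2: the fixed strategy $S_{II}^*$ pulls toward $Z^\rho(X_k)$, which is \emph{not} the fixed $y_0 = Z^\rho(x_0)$ once $X_k$ has drifted, so the radially symmetric tug computation in \eqref{mrge} is no longer verbatim. The cancellation argument handles this by combining the convexity of $\phi$ (which bounds Player~I's worst response below by $(|X_k - y_0| + \eps d'_{X_k})^{-q}$) with the fact that, on $U$, the angle between $Z^\rho(X_k) - X_k$ and $y_0 - X_k$ is uniformly small; the resulting mismatch is of order $\bar r \eps^2$, which is absorbed into $C_0 (s_\eps(X_k) + \eps^2)$ at the price of shrinking $C_0$. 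This is precisely where the hypotheses $\bar r < r < \rho/2$ and $h < \bar r/2 - \eps$ are used.
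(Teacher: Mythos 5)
Your approach diverges from the paper at the crucial point, and that divergence introduces a gap you have not closed. The paper does \emph{not} use a fixed potential center: it works with the moving quantity $|X_k - Z^\rho(X_k)|^{-n}$, so that the tug direction (Player~II pulling toward $Z^\rho(X_k)$) and the center of the potential coincide exactly at every step. Re-centering from $Z^\rho(X_k)$ to $Z^\rho(X_{k+1})$ is then handled by the geometric inequality $|y - Z^\rho(X_k)| \ge |y - Z^\rho(y)| = \rho - \dist(y,\partial\Omega)$ (a consequence of the interior ball condition: if $y$ were closer to $Z^\rho(X_k)$ than $\rho - \dist(y,\partial\Omega)$, a ball around $y$ larger than its distance to the boundary would sit inside $B_\rho(Z^\rho(X_k)) \subset \Omega$). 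This inequality has the correct sign for the submartingale without any error term, which is exactly why the paper can run optional stopping directly with the global stopping time $\bar\tau$, without truncating at the exit of $B_{\bar r}(x_0)$.

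In your proposal the center $y_0 = Z^\rho(x_0)$ is frozen, so when Player~II pulls toward $Z^\rho(X_k)$ there is a genuine angular mismatch. You estimate this mismatch as $O(\bar r \eps^2)$, but that is incorrect. If $\theta$ denotes the angle between $Z^\rho(X_k) - X_k$ and $y_0 - X_k$, a direct computation gives
\begin{align*}
|X_{k+1} - y_0|^2 = \bigl(|X_k - y_0| - \eps d'_{X_k}\bigr)^2 + 2\,|X_k - y_0|\,\eps d'_{X_k}\,(1 - \cos\theta),
\end{align*}
so that $|X_{k+1} - y_0| \approx (|X_k - y_0| - \eps d'_{X_k}) + \eps d'_{X_k}(1 - \cos\theta)$ and the resulting defect in $\phi(X_{k+1})$ compared with the ideal aligned pull is of size $\eps\,(1-\cos\theta) \sim \eps\,\theta^2$. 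On $B_{\bar r}(x_0)$ one has $\theta \sim \bar r/\rho$ (uniformly, not vanishing with $\eps$), so the defect is of order $\eps\,\bar r^2/\rho^2$ --- \emph{first order} in $\eps$, not second order. The drift you need to dominate it, $C_0(s_\eps(X_k) + \eps^2)$, degenerates to $C_0\eps^2$ as soon as $X_k$ is at distance $\ge \eps$ from $\partial\Omega$ (where $s_\eps = 0$), which certainly occurs before $\bar\tau$ once $\dist(X_k,\partial\Omega) \in (\eps, h)$. In that regime $\eps\,\bar r^2/\rho^2 \gg \eps^2$ for $\eps$ small while $\bar r$ is held fixed, so your submartingale inequality fails. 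Shrinking $C_0$ cannot repair this, because the defect dominates the whole drift, not just a fraction of it; and the truncation at $\partial B_{\bar r}(x_0)$ does not help, since the mismatch is already of the above size well inside $B_{\bar r}(x_0)$. To make the argument work you would have to re-center the potential along the trajectory, which is precisely the paper's device; the fixed-center variant as you have set it up does not close.
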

\begin{proof}
We abbreviate $ \overline{\tau}^{\eps,\rho,h,x_{0}}$ by $ \overline{\tau}$.
We consider a sequence of random variable $\{ Q_{k} \}_{k=0}$ with
$$ Q_{k}= |X_{k}-Z^{\rho}(X_{k})|^{-n}-C_{0}(k\eps^{2}+S_{k}^{\eps,x_{0}}) , $$
where $C_{0}$ is the constant in Lemma \ref{mcs}.

To show that $\{ Q_{k} \}_{k=0}^{\overline{\tau}}$ is a submartingale with the filtration $\{ \mathcal{F}_{k} \}_{k=0}$, we observe that for each $k < \overline{\tau}$,
\begin{align*}
&\sup_{S_{I}}\mathbb{E}_{S_{I},S_{II}^{\ast}}^{x_{0}}[ Q_{k+1} - Q_{k} | \mathcal{F}_{k}]
\\ & =\sup_{S_{I}}\mathbb{E}_{S_{I},S_{II}^{\ast}}^{x_{0}}[ |X_{k+1}-Z^{\rho}(X_{k+1})|^{-n} | \mathcal{F}_{k}] - |X_{k}-Z^{\rho}(X_{k})|^{-n}- C_{0}(\eps^{2}+s_{\eps}(X_{k})) .
\end{align*}
Since $\Omega$ satisfies the interior ball condition with the radius $\rho$ and $h<\frac{\overline{r}}{2}-\eps<\rho$
we have
$$ \bigg|X_{k}-\eps d'_{X_{k}}\frac{X_{k}-Z^{\rho}(X_{k})}{|X_{k}-Z^{\rho}(X_{k})|}\bigg|=|X_{k}-Z^{\rho}(X_{k})|-\eps d'_{X_{k}},$$
and 
$$ \sup_{\nu\in B_{1}}\big|X_{k}+\eps d'_{X_{k}}\nu - Z^{\rho}\big(X_{k}+d'_{X_{k}}\nu \big)\big|=|X_{k}-Z^{\rho}(X_{k})|+\eps d'_{X_{k}}.$$
Furthermore, we also see that
$$  |y-Z^{\rho}(X_{k})| \ge |y-Z^{\rho}(y)| = \rho - \dist (y, \partial \Omega)$$
for each $y \in B_{\eps}(X_{k})\cap \Omega$.
Hence, we get
\begin{align*}
&\mathbb{E}_{S_{I},S_{II}^{\ast}}^{x_{0}}[ |X_{k+1}-Z^{\rho}(X_{k+1})|^{-n} | \mathcal{F}_{k}]
\\ & \ge \frac{\alpha}{2}\big(|X_{k}-Z^{\rho}(X_{k})|-\eps d'_{X_{k}}\big)^{-n} + \frac{\alpha}{2}\big(|X_{k}-Z^{\rho}(X_{k})|+\eps d'_{X_{k}}\big)^{-n} \\ & \qquad + (1-\alpha)
\kint_{B_{\eps}(X_{k})\cap \Omega}|z-Z^{\rho}(X_{k})|^{-n}dz 
\\ & \ge  |X_{k}-Z^{\rho}(X_{k})|^{-n}+C_{0}(\eps^{2}+s_{\eps}(X_{k})) 
\end{align*}
for any strategy $S_{I}$.
From the above estimates, we see that $Q_{k}$ is a supermartingale.

Now we fix $k\ge 1$ and apply the optional stopping theorem to $\overline{\tau} \wedge k$, obtaining
\begin{align*}
\overline{r}^{-n}\le 
\mathbb{E}_{S_{I},S_{II}^{\ast}}^{x_{0}}[Q_{0}] &
\le \mathbb{E}_{S_{I},S_{II}^{\ast}}^{x_{0}}[Q_{\overline{\tau} \wedge k}] \\ &
= \mathbb{E}_{S_{I},S_{II}^{\ast}}^{x_{0}}[|X_{\overline{\tau} \wedge k}-Z^{\rho}(X_{\overline{\tau} \wedge k})|^{-n}]- C_{0} \mathbb{E}_{S_{I},S_{II}^{\ast}}^{x_{0}}[(\overline{\tau} \wedge k)\eps^{2}+S_{\overline{\tau} \wedge k}^{\eps,x_{0}}]
\\ & \le (\overline{r}-h-\eps)^{-n} - C_{0} \mathbb{E}_{S_{I},S_{II}^{\ast}}^{x_{0}}[(\overline{\tau} \wedge k)\eps^{2}+S_{\overline{\tau} \wedge k}^{\eps,x_{0}}].
\end{align*}We note that $|Q_{k}|<(\overline{r}-h-\eps)^{-n}$ and hence the optional stopping theorem is applicable.
This yields
\begin{align*}
\mathbb{E}_{S_{I},S_{II}^{\ast}}^{x_{0}}[\overline{\tau}\eps^{2}+S_{\overline{\tau} }^{\eps,x_{0}}] \le \frac{1}{C_{0}}\big(  (\overline{r}-h-\eps)^{-n} -\overline{r}^{-n} \big) \le C\overline{r}^{-n-1}(h+\eps)
\end{align*}
for any strategy $S_{I}$ and some $C$ depending on $n,\rho $ and $\alpha $.
Now we can complete the proof by taking the supremum over $S_{I}$.
\end{proof}
Now we are ready to prove the main theorem in this section.
The main difficulty compared to the case $p=2$ is to control additional terms arising from the game process.
We will employ a cancellation method similar to \cite{MR3011990}. But the original argument cannot be applied directly,
since the radius of the tug-of-war shrinks near the boundary and thus the set of $S_{I}$ and $S_{II}$ are different in this case.  
To overcome this issue, we consider a modified version of that argument.
This enables us to control additional terms due to the game setting being small enough.

From now on, we as $\eps>0$ be sufficiently small and $\overline{r}<r<\frac{\rho}{2}$.
We define $$\{ Z^{\eps, \overline{r}, x_{0}}(X_{k}),Z^{\eps, \overline{r}, y_{0}}(Y_{k}),S_{k}^{\eps, x_{0}},S_{k}^{\eps, y_{0}} \}_{n=0}$$ with the stopping times $\overline{\tau}^{\eps,\rho,h,x_{0}},\overline{\tau}^{\eps,\rho,h,y_{0}},$ respectively.
For simplicity, we use the notations $Z_{k}^{\eps, \overline{r}, x_{0}}, Z_{k}^{\eps, \overline{r}, y_{0}}$
instead of $Z^{\eps, \overline{r}, x_{0}}(X_{k}),Z^{\eps, \overline{r}, y_{0}}(Y_{k})$.
We also define
$$\overline{\tau}= \overline{\tau}^{\eps,\rho,h,x_{0}}\wedge\overline{\tau}^{\eps,\rho,h,y_{0}}.$$

\begin{proof}[Proof of Theorem \ref{bdryreg}]

The proof can be divided into three parts, similar to the proof of \cite[Theorem 8.1]{MR4684385}. We observe the following decomposition
\begin{align}\label{esdecomp}\begin{split}
&|u_{\eps}(x_{0})-u_{\eps}(y_{0}) |
\\ & \le \big|\sup_{S_{I}}\inf_{S_{II}}\mathbb{E}_{S_{I},S_{II}}^{x_{0}}[u_{\eps}(X_{\overline{\tau}})]-u_{\eps}(x_{0})\big| +\big|\sup_{S_{I}}\inf_{S_{II}}\mathbb{E}_{S_{I},S_{II}}^{y_{0}}[u_{\eps}(Y_{\overline{\tau}})]-u_{\eps}(y_{0})\big|
\\ & \quad +\big| \sup_{S_{I}}\inf_{S_{II}}\mathbb{E}_{S_{I},S_{II}}^{x_{0}}[u_{\eps}(X_{\overline{\tau}})] -\sup_{S_{I}}\inf_{S_{II}}\mathbb{E}_{S_{I},S_{II}}^{y_{0}}[u_{\eps}(Y_{\overline{\tau}})] \big|.\end{split}
\end{align}
We estimate the first two terms on the right hand side of \eqref{esdecomp} in the first step.
To deal with the last term, we will construct a proper supermartingale in the second step.
In the last step, we derive our desired estimate by using the supermartingale.

Let $h=\delta^{1/2}$ and set a sequence of random variables $\{ M_{k} \}_{k=0}$ defined as
\begin{align} \label{mgm}\begin{split} M_{k}=u_{\eps}&(X_{k})\prod_{i=1}^{k}\big( 1-  (1-\alpha)\gamma s_{\eps}(X_{i-1}) \big)\\&+ (1-\alpha)\gamma 
\sum_{j=1}^{k-1}\bigg( s_{\eps}(X_{j})G(X_{j})\prod_{i=1}^{j}\big( 1-  (1-\alpha)\gamma s_{\eps}(X_{j-1})) \bigg).
\end{split}
\end{align}
To guarantee the well-definedness of the last term on the right hand side, we considered an arbitrarily continuous extension of $G$ to $\overline{\Omega}$ in $\Omega \backslash I_{\eps}$ 
and still use the notation $G$ for convenience. 
We also remark that $M_k$ does not depend on the extension of $G$ since $s_{\eps}\equiv 0$ in $\Omega \backslash I_{\eps}$. 
Then, we observe that for each $k\ge 1$,
\begin{align*}
&\sup_{S_{I}}\inf_{S_{II}}\mathbb{E}_{S_{I},S_{II}}^{x_{0}}[M_{k+1}-M_{k}|\mathcal{F}_{k}]
\\ &= \bigg\{\frac{\alpha}{2}\bigg( \sup_{B_{\eps d'_{X_{k}}}(X_{k})}  u_{\eps}+  
\inf_{B_{\eps d'_{X_{k}}}(X_{k})} u_{\eps} \bigg) + (1-\alpha)  \kint_{B_{\eps }(X_{k})\cap \Omega} u_{\eps}(y) dy  \bigg\}\\& \qquad \qquad \times \prod_{i=1}^{k+1}\big( 1-  (1-\alpha)\gamma s_{\eps}(X_{i-1}) \big)  - u_{\eps}(X_{k})\prod_{i=1}^{k}\big( 1-  (1-\alpha)\gamma s_{\eps}(X_{i-1}) \big)
\\ & \qquad
+  (1-\alpha)\gamma  s_{\eps}(X_{k})G(X_{k})\prod_{i=1}^{k}\big( 1-  (1-\alpha)\gamma s_{\eps}(X_{j-1})\big)
\\ & = - (1-\alpha)\gamma  s_{\eps}(X_{k})G(X_{k})\prod_{i=1}^{k}\big( 1- (1-\alpha)\gamma s_{\eps}(X_{j-1})\big)
\\ & \qquad + (1-\alpha)\gamma  s_{\eps}(X_{k})G(X_{k})\prod_{i=1}^{k}\big( 1- \gamma s_{\eps}(X_{j-1})=0.
\end{align*}
Hence, $M_{k}$ is a martingale with respect to $\{ \mathcal{F}_{k} \}_{k=0}$.
Then we can apply the optional stopping theorem since
\begin{align*}
|M_{k\wedge \overline{\tau}}| \le ||G||_{L^{\infty}(\Gamma_{\eps})}+\gamma ||G||_{L^{\infty}(\Gamma_{\eps})} \sup_{S_{I}}\inf_{S_{II}}\mathbb{E}_{S_{I},S_{II}}^{x_{0}}
[S_{k\wedge \overline{\tau}-1}^{\eps, x_{0}}]
\end{align*}
is equibounded by Lemma \ref{esttaus}.
We have
\begin{align*}
&u_{\eps}(x_{0})=M_{0}\\ &=\sup_{S_{I}}\inf_{S_{II}}\mathbb{E}_{S_{I},S_{II}}^{x_{0}}[M_{ \overline{\tau}}]
\\ & =\sup_{S_{I}}\inf_{S_{II}}\mathbb{E}_{S_{I},S_{II}}^{x_{0}}\bigg[u_{\eps}(X_{\overline{\tau}})\prod_{i=1}^{\overline{\tau}}\big( 1- (1-\alpha)\gamma s_{\eps}(X_{i-1}) \big)\\ & \qquad \qquad \qquad \qquad +(1-\alpha)\gamma 
\sum_{j=1}^{\overline{\tau}-1}\bigg( s_{\eps}(X_{j})G(X_{j})\prod_{i=1}^{j}\big( 1- (1-\alpha)\gamma s_{\eps}(X_{j-1}) \bigg)\bigg].
\end{align*}
By using the following property
$$ \prod_{i=1}^{k}(1-a_{i}) \ge 1-\sum_{i=1}^{k}a_{i} $$
for every $k$ and $\{a_{i}\}_{i=1}^{k} \subset [0,1]$, we have
\begin{align*}
&u_{\eps}(x_{0})-\sup_{S_{I}}\inf_{S_{II}}\mathbb{E}_{S_{I},S_{II}}^{x_{0}}[u_{\eps}(X_{\overline{\tau}})]
\\ & = \sup_{S_{I}}\inf_{S_{II}}\mathbb{E}_{S_{I},S_{II}}^{x_{0}}\bigg[u_{\eps}(X_{\overline{\tau}})\bigg(\prod_{i=1}^{\overline{\tau}}\big( 1-(1-\alpha)\gamma s_{\eps}(X_{i-1}) \big)-1\bigg)\\ & \qquad \qquad \qquad \qquad \qquad +(1-\alpha)\gamma 
\sum_{j=1}^{\overline{\tau}-1}\bigg( s_{\eps}(X_{j})G(X_{j})\prod_{i=1}^{j}\big( 1-(1-\alpha) \gamma s_{\eps}(X_{j-1}) \bigg)
\bigg] 
\\ & \le \sup_{S_{I}}\inf_{S_{II}}\mathbb{E}_{S_{I},S_{II}}^{x_{0}}\big[ 2\gamma ||G||_{L^{\infty}(\Gamma_{\eps})}S_{\overline{\tau}}^{\eps, x_{0}} \big]
\\ &  \le  2\gamma ||G||_{L^{\infty}(\Gamma_{\eps})} \sup_{S_{I}}\mathbb{E}_{S_{I},S_{II}^{\ast}}^{x_{0}}\big[S_{\overline{\tau}}^{\eps, x_{0}} \big]
\\ &   \le  2\gamma ||G||_{L^{\infty}(\Gamma_{\eps})} \overline{r}^{-n-1}C\delta^{1/2} 
\end{align*}
for some $C=C(n, \alpha, \rho)$.
The reversed inequality can be derived by using a similar argument.
Therefore, we have 
$$|u_{\eps}(x_{0})-\sup_{S_{I}}\inf_{S_{II}}\mathbb{E}_{S_{I},S_{II}}^{x_{0}}[u_{\eps}(X_{\overline{\tau}})]| \le 2\gamma ||G||_{L^{\infty}(\Gamma_{\eps})} \overline{r}^{-n-1}C\delta^{1/2}  ,$$
$$|u_{\eps}(y_{0})-\sup_{S_{I}}\inf_{S_{II}}\mathbb{E}_{S_{I},S_{II}}^{y_{0}}[u_{\eps}(Y_{\overline{\tau}})]| \le 2\gamma ||G||_{L^{\infty}(\Gamma_{\eps})} \overline{r}^{-n-1}C\delta^{1/2}  .$$
Now we need to estimate
$$\big| \sup_{S_{I}}\inf_{S_{II}}\mathbb{E}_{S_{I},S_{II}}^{x_{0}}[u_{\eps}(X_{\overline{\tau}})] -\sup_{S_{I}}\inf_{S_{II}}\mathbb{E}_{S_{I},S_{II}}^{y_{0}}[u_{\eps}(Y_{\overline{\tau}})] \big|. $$
We write strategies for the game process with the starting point at $x_{0}$ as $S_{I}^{x},S_{II}^{x}$.
Similarly, we also use the notation $S_{I}^{y},S_{II}^{y}$ for the process starting at $y_{0}$.
Observe that
\begin{align}\label{foreas}\begin{split}
&\big| \sup_{S_{I}}\inf_{S_{II}}\mathbb{E}_{S_{I},S_{II}}^{x_{0}}[u_{\eps}(X_{\overline{\tau}})] -\sup_{S_{I}}\inf_{S_{II}}\mathbb{E}_{S_{I},S_{II}}^{y_{0}}[u_{\eps}(Y_{\overline{\tau}})] \big|
\\ &= \big| \sup_{S_{I}^{x}}\inf_{S_{II}^{x}}\inf_{S_{I}^{y}}\sup_{S_{II}^{y}}
\mathbb{E}_{S_{I}^{x},S_{II}^{x},S_{I}^{y},S_{II}^{y}}^{(x_{0},y_{0})}[u_{\eps}(X_{\overline{\tau}})-u_{\eps}(Y_{\overline{\tau}})] \big|
\\ & \le \sup_{S_{I}^{x}}\inf_{S_{II}^{x}}\inf_{S_{I}^{y}}\sup_{S_{II}^{y}}
\mathbb{E}_{S_{I}^{x},S_{II}^{x},S_{I}^{y},S_{II}^{y}}^{(x_{0},y_{0})}[|u_{\eps}(X_{\overline{\tau}})-u_{\eps}(Y_{\overline{\tau}})|] .
\end{split}\end{align}
We remark that we can consider the following decomposition:
\begin{align} \label{decomp}
\begin{split}
&\mathbb{E}_{S_{I}^{x},S_{II}^{x},S_{I}^{y},S_{II}^{y}}^{(x_{0},y_{0})}[|u_{\eps}(X_{\overline{\tau}})-u_{\eps}(Y_{\overline{\tau}})|] 
\\& = \mathbb{E}_{S_{I}^{x},S_{II}^{x},S_{I}^{y},S_{II}^{y}}^{(x_{0},y_{0})}[|u_{\eps}(X_{\overline{\tau}})-u_{\eps}(Y_{\overline{\tau}})|\mathds{1}_{\{S_{\overline{\tau}}^{\eps , x_{0}}+S_{\overline{\tau}}^{\eps , y_{0}}>1 \}}]
\\ & \ + \mathbb{E}_{S_{I}^{x},S_{II}^{x},S_{I}^{y},S_{II}^{y}}^{(x_{0},y_{0})}[|u_{\eps}(X_{\overline{\tau}})-u_{\eps}(Y_{\overline{\tau}})|\mathds{1}_{\{S_{\overline{\tau}}^{\eps , x_{0}}+S_{\overline{\tau}}^{\eps , y_{0}}\le1 \}}\mathds{1}_{\{|X_{\overline{\tau}}-Y_{\overline{\tau}}|\ge \frac{\delta_{0}}{2} \}}]
\\ & \ + \mathbb{E}_{S_{I}^{x},S_{II}^{x},S_{I}^{y},S_{II}^{y}}^{(x_{0},y_{0})}[|u_{\eps}(X_{\overline{\tau}})-u_{\eps}(Y_{\overline{\tau}})|\mathds{1}_{\{S_{\overline{\tau}}^{\eps , x_{0}}+S_{\overline{\tau}}^{\eps , y_{0}}\le1 \}}\mathds{1}_{\{|X_{\overline{\tau}}-Y_{\overline{\tau}}|< \frac{\delta_{0}}{2}  \}}].
\end{split}
\end{align}

To derive our desired result, we have to estimate $$ \sup_{S_{I}^{x}}\inf_{S_{II}^{x}}\inf_{S_{I}^{y}}\sup_{S_{II}^{y}}\mathbb{E}_{S_{I}^{x},S_{II}^{x},S_{I}^{y},S_{II}^{y}}^{(x_{0},y_{0})}[|X_{\overline{\tau}}-Y_{\overline{\tau}}|].$$
Observe that for each $k$, if $\dist(X_{k},\partial \Omega),\dist(Y_{k},\partial \Omega)\ge\frac{\eps}{2}$, we have 
\begin{align*} \sup_{S_{I}^{x}}\inf_{S_{II}^{x}}\inf_{S_{I}^{y}}\sup_{S_{II}^{y}}
\mathbb{E}_{S_{I}^{x},S_{II}^{x},S_{I}^{y},S_{II}^{y}}^{(x_{0},y_{0})}\big[|X_{k+1}-Y_{k+1}|^{2}\big|\mathcal{F}_{k}\big]
\le |X_{k}-Y_{k}|^{2},
\end{align*}
because we can copy the strategies $ S_{I}^{x}, S_{II}^{y}$ for $ S_{I}^{y}, S_{II}^{x}$ in this case.

Suppose that  $\dist(X_{k},\partial \Omega)<\frac{\eps}{2}$ or $\dist(Y_{k},\partial \Omega)<\frac{\eps}{2}$. 
For the random walk, we get the following estimate
\begin{align}\label{rwes}\begin{split}
&\mathbb{E}[|X_{k+1}-Y_{k+1}|^{2}\mathds{1}_{ \{\frac{\alpha}{2}  < \xi_{k} <  1-\frac{\alpha}{2}\}}|\mathcal{F}_{k}\}]
\\&\le (1-\alpha) |X_{k}-Y_{k}|^{2}\big(1+ C(s_{\eps}(X_{k})+s_{\eps}(Y_{k}))\big)+C\eps(s_{\eps}(X_{k})+s_{\eps}(Y_{k}))\big) \end{split}
\end{align} for some universal $C>0$
(see the proof of \cite[Theorem 8.1]{MR4684385}). 

Next, we have to handle the terms corresponding to tug-of-war games.
First, we assume that Player I won the coin toss.
In that case, we have to estimate
$$\sup_{S_{I}^{x}}\inf_{S_{I}^{y}}
\mathbb{E}_{S_{I}^{x},S_{I}^{y}}^{(x_{0},y_{0})}\big[|X_{k+1}-Y_{k+1}|^{2}\big|\mathcal{F}_{k}\big].
$$
Fix the strategy $\nu_{x}\in B_{1}$ for $S_{I}^{x}$. 
If $|X_{k}-Y_{k}|\ge\eps (d'_{X_{k}}+ d'_{Y_{k}})$, we select a unit vector $\nu_{y}$ with
$$\nu_{y}=\frac{X_{k}+\eps d'_{X_{k}}\nu_{x}-Y_{k}}{|X_{k}+\eps d'_{X_{k}}\nu_{x}-Y_{k}|},$$
which is minimizing $|X_{k+1}-Y_{k+1}|$.
Then by the observation
$$|X_{k+1}-Y_{k+1}| =\big|X_{k}+\eps d'_{X_{k}}\nu_{x}-Y_{k}\big|-\eps d'_{Y_{k}}
\le |X_{k}-Y_{k}|+ \eps (d'_{X_{k}}-d'_{Y_{k}}) ,$$
we have 
$$ \inf_{Y_{k+1}\in B_{\eps d'_{Y_{k}}}(Y_{k})}|X_{k+1}-Y_{k+1}| \le |X_{k}-Y_{k}|+\eps (d'_{X_{k}}-d'_{Y_{k}}) $$
for every possible $X_{k+1}$. Similarly, we also obtain
$$ \inf_{X_{k+1}\in B_{\eps d'_{X_{k}}}(X_{k})}|X_{k+1}-Y_{k+1}| \le |X_{k}-Y_{k}|+\eps (d'_{Y_{k}}-d'_{X_{k}}) $$
for every possible $Y_{k+1}$ if Player II won the coin toss.

Therefore, we get
\begin{align*}
& \sup_{S_{I}^{x}}\inf_{S_{II}^{x}}\inf_{S_{I}^{y}}\sup_{S_{II}^{y}}
\mathbb{E}_{S_{I}^{x},S_{II}^{x},S_{I}^{y},S_{II}^{y}}^{(x_{0},y_{0})}\big[|X_{k+1}-Y_{k+1}|^{2}\big|\mathcal{F}_{k}\big]
\\ &\le \frac{\alpha}{2}\big(  |X_{k}-Y_{k}|+\eps (d'_{X_{k}}-d'_{Y_{k}})\big)^{2}
+\frac{\alpha}{2}\big(  |X_{k}-Y_{k}|+\eps (d'_{Y_{k}}-d'_{X_{k}})\big)^{2}
\\ & \qquad + (1-\alpha) \big( |X_{k}-Y_{k}|^{2}\big(1+ C(s_{\eps}(X_{k})+s_{\eps}(Y_{k}))\big)+C\eps(s_{\eps}(X_{k})+s_{\eps}(Y_{k})))\big) 
\\ & \le |X_{k}-Y_{k}|^{2}\big(1+C(1-\alpha)(s_{\eps}(X_{k})+s_{\eps}(Y_{k}))\big)\\ &\qquad +
(1-\alpha) C\eps(s_{\eps}(X_{k})+s_{\eps}(Y_{k})))+\eps^{2} (d'_{X_{k}}-d'_{Y_{k}})^{2}.
\end{align*}
We also check that $|d'_{X_{k}}-d'_{Y_{k}}|\le \frac{1}{2}$, and this implies for any $X_{k},Y_{k} \in \Omega$ that
$$\eps^{2} (d'_{X_{k}}-d'_{Y_{k}})^{2} \le \frac{\eps^{2}}{4}\le C_{0}\eps s_{\eps}(Z)\le C_{0}\eps (s_{\eps}(X_{k})+s_{\eps}(Y_{k})), $$
where $C_{0}= \frac{(n+1)|B_{1,1/2}^{n}|}{4|B_{1}^{n-1}|}(4/3 )^{\frac{n+1}{2}}>0 $ and $Z\in \Omega$ with $d_{Z}=\frac{1}{2}$.
From the above observation, we finally get 
 \begin{align}\label{expbd}\begin{split}
& \sup_{S_{I}^{x}}\inf_{S_{II}^{x}}\inf_{S_{I}^{y}}\sup_{S_{II}^{y}}
\mathbb{E}_{S_{I}^{x},S_{II}^{x},S_{I}^{y},S_{II}^{y}}^{(x_{0},y_{0})}\big[|X_{k+1}-Y_{k+1}|^{2}\big|\mathcal{F}_{k}\big]
\\ & \le |X_{k}-Y_{k}|^{2}\big(1+C(s_{\eps}(X_{k})+s_{\eps}(Y_{k}))\big)+ C\eps(s_{\eps}(X_{k})+s_{\eps}(Y_{k})))\end{split}
\end{align}
for some $C>0$.
In the case $|X_{k}-Y_{k}|<\eps (d'_{X_{k}}+ d'_{Y_{k}})$, we see that $$|X_{k+1}-Y_{k+1}| <2\eps (d'_{X_{k}}+ d'_{Y_{k}}) \le 2\eps, $$ whatever strategies are chosen.
Hence, we have
\begin{align*}
& \sup_{S_{I}^{x}}\inf_{S_{II}^{x}}\inf_{S_{I}^{y}}\sup_{S_{II}^{y}}
\mathbb{E}_{S_{I}^{x},S_{II}^{x},S_{I}^{y},S_{II}^{y}}^{(x_{0},y_{0})}\big[|X_{k+1}-Y_{k+1}|^{2}\big|\mathcal{F}_{k}\big]
\\ &\le 4\alpha \eps^{2}+(1-\alpha)\big( \eps^{2}\big(1+ C(s_{\eps}(X_{k})+s_{\eps}(Y_{k}))\big)+C\eps(s_{\eps}(X_{k})+s_{\eps}(Y_{k}))\big)
\\ &\le C\eps^{2}+C\eps(s_{\eps}(X_{k})+s_{\eps}(Y_{k}))\big)
\\ & \le C\eps(s_{\eps}(X_{k})+s_{\eps}(Y_{k}))
\end{align*}
for some universal $C>0$.
Then we can construct the following sequence of random variables $\{ Q_{k}\}_{k=0}$ such that
$$ Q_{k}=|X_{k}-Y_{k}|^{2} e^{-C(S_{k}^{x_{0}}+S_{k}^{y_{0}})}-C\eps (S_{k}^{x_{0}}+S_{k}^{y_{0}}),$$
and check that it is a supermartingale.

Now we can apply the optional stopping theorem to $Q_{k}$ by virtue of Lemma \ref{esttaus}, since
\begin{align*}
 &\sup_{S_{I}^{x}}\inf_{S_{II}^{x}}\inf_{S_{I}^{y}}\sup_{S_{II}^{y}}\mathbb{E}_{S_{I}^{x},S_{II}^{x},S_{I}^{y},S_{II}^{y}}^{(x_{0},y_{0})}[S_{\overline{\tau}}^{\eps , x_{0}}+S_{\overline{\tau}}^{\eps , y_{0}}]
\\& \le \sup_{S_{I}^{x}}\mathbb{E}_{S_{I}^{x},S_{II}^{x,\ast}}^{x_{0}}[S_{\overline{\tau}}^{\eps , x_{0}}]+\sup_{S_{I}^{y}}\mathbb{E}_{S_{I}^{y},S_{II}^{y,\ast}}^{y_{0}}[S_{\overline{\tau}}^{\eps , y_{0}}]
\end{align*}
(we have used notations $S_{II}^{x,\ast},S_{II}^{y,\ast}$ are corresponding to $S_{II}^{\ast}$ in Lemma \ref{esttaus}).
It gives 
\begin{align*}
\delta^{2}&\ge |x_{0}-y_{0}|^{2}
\\&\ge \sup_{S_{I}^{x}}\inf_{S_{II}^{x}}\inf_{S_{I}^{y}}\sup_{S_{II}^{y}}\mathbb{E}_{S_{I}^{x},S_{II}^{x},S_{I}^{y},S_{II}^{y}}^{(x_{0},y_{0})}[|X_{\overline{\tau}}-Y_{\overline{\tau}}|^{2} e^{-C(S_{\overline{\tau}}^{\eps , x_{0}}+S_{\overline{\tau}}^{\eps , y_{0}})}-C\eps (S_{\overline{\tau}}^{\eps , x_{0}}+S_{\overline{\tau}}^{\eps , y_{0}})].
\end{align*}

From Lemma \ref{esttaus}, we already know that
\begin{align*}
\sup_{S_{I}^{x}}\sup_{S_{I}^{y}}\mathbb{E}_{S_{I}^{x},S_{II}^{x,\ast},S_{I}^{y},S_{II}^{y,\ast}}^{(x_{0},y_{0})}[S_{\overline{\tau}}^{\eps , x_{0}}+S_{\overline{\tau}}^{\eps , y_{0}}]\le C\delta^{1/2},
\end{align*}
and this yields
\begin{align}\label{proes}\mathbb{P}(S_{\overline{\tau}}^{\eps , x_{0}}+S_{\overline{\tau}}^{\eps , y_{0}}> 1)\le C\delta^{1/2}. \end{align}
We also deduce that
\begin{align*}
&\sup_{S_{I}^{x}}\inf_{S_{II}^{x}}\inf_{S_{I}^{y}}\sup_{S_{II}^{y}}\mathbb{E}_{S_{I}^{x},S_{II}^{x},S_{I}^{y},S_{II}^{y}}^{(x_{0},y_{0})}[|X_{\overline{\tau}}-Y_{\overline{\tau}}|^{2}\mathds{1}_{\{S_{\overline{\tau}}^{\eps , x_{0}}+S_{\overline{\tau}}^{\eps , y_{0}}\le1 \}}]
\\ &\le e^{-C} \sup_{S_{I}^{x}}\inf_{S_{II}^{x}}\inf_{S_{I}^{y}}\sup_{S_{II}^{y}}\mathbb{E}_{S_{I}^{x},S_{II}^{x},S_{I}^{y},S_{II}^{y}}^{(x_{0},y_{0})}[|X_{\overline{\tau}}-Y_{\overline{\tau}}|^{2} e^{-C(S_{\overline{\tau}}^{\eps , x_{0}}+S_{\overline{\tau}}^{\eps , y_{0}})}]
\\ & \le C\big( \delta^{2} + \eps\sup_{S_{I}^{x}}\sup_{S_{I}^{y}}\mathbb{E}_{S_{I}^{x},S_{II}^{x,\ast},S_{I}^{y},S_{II}^{y,\ast}}^{(x_{0},y_{0})}[S_{\overline{\tau}}^{\eps , x_{0}}+S_{\overline{\tau}}^{\eps , y_{0}}]\big)
\\ & \le C\delta^{2}+C\eps \delta^{1/2} \le C\delta^{2},
\end{align*}
 if $\eps << \delta$.
Therefore, we get 
\begin{align} \label{dtsmes}
\sup_{S_{I}^{x}}\inf_{S_{II}^{x}}\inf_{S_{I}^{y}}\sup_{S_{II}^{y}}\mathbb{E}_{S_{I}^{x},S_{II}^{x},S_{I}^{y},S_{II}^{y}}^{(x_{0},y_{0})}[|X_{\overline{\tau}}-Y_{\overline{\tau}}|^{\sigma}\mathds{1}_{\{S_{\overline{\tau}}^{\eps , x_{0}}+S_{\overline{\tau}}^{\eps , y_{0}}\le1 \}}]\le C\delta^{\sigma} .
\end{align}
Similarly, combining \eqref{proes} with \eqref{dtsmes}, we also have
\begin{align}
\mathbb{P}\bigg( \{S_{\overline{\tau}}^{\eps , x_{0}}+S_{\overline{\tau}}^{\eps , y_{0}}\le 1 \}\cap
\bigg\{ |X_{\overline{\tau}}-Y_{\overline{\tau}}| \ge \frac{\delta_{0}}{2}\bigg\} \bigg) \le \frac{C\delta}{\delta_{0}}.
\end{align}

It still remains to estimate
$$\sup_{S_{I}^{x}}\inf_{S_{II}^{x}}\inf_{S_{I}^{y}}\sup_{S_{II}^{y}}\mathbb{E}_{S_{I}^{x},S_{II}^{x},S_{I}^{y},S_{II}^{y}}^{(x_{0},y_{0})}[|u_{\eps}(X_{\overline{\tau}})-u_{\eps}(Y_{\overline{\tau}})|\mathds{1}_{\{S_{\overline{\tau}}^{\eps , x_{0}}+S_{\overline{\tau}}^{\eps , y_{0}}\le1 \}}\mathds{1}_{\{|X_{\overline{\tau}}-Y_{\overline{\tau}}|< \frac{\delta_{0}}{2}  \}}].$$
By the definition of $\overline{\tau}$, we see that
$$ \max\{\dist(X_{\overline{\tau}},\partial \Omega),\dist(Y_{\overline{\tau}},\partial \Omega)\} > \delta^{1/2}.$$
Assume that $\delta <\frac{\delta_{0}}{2}$. 
 Then, by Theorem \ref{intlip}, we have 
\begin{align*}|u_{\eps}(X_{\overline{\tau}})-u_{\eps}(Y_{\overline{\tau}}) |&\le C  ||G||_{L^{\infty}(\Gamma_{\eps})}\frac{|X_{\overline{\tau}}-Y_{\overline{\tau}}|^{\sigma} +\eps^{\sigma}}{\delta^{\sigma/2}}  
\end{align*}
a.s. in $\{S_{\overline{\tau}}^{\eps , x_{0}}+S_{\overline{\tau}}^{\eps , y_{0}}\le 1 \}\cap
\{ |X_{\overline{\tau}}-Y_{\overline{\tau}}| < \frac{\delta_{0}}{2} \} .$
Now we observe that
\begin{align*}
&\sup_{S_{I}^{x}}\inf_{S_{II}^{x}}\inf_{S_{I}^{y}}\sup_{S_{II}^{y}}\mathbb{E}_{S_{I}^{x},S_{II}^{x},S_{I}^{y},S_{II}^{y}}^{(x_{0},y_{0})}[|u_{\eps}(X_{\overline{\tau}})-u_{\eps}(Y_{\overline{\tau}})|\mathds{1}_{\{S_{\overline{\tau}}^{\eps , x_{0}}+S_{\overline{\tau}}^{\eps , y_{0}}\le1 \}}\mathds{1}_{\{|X_{\overline{\tau}}-Y_{\overline{\tau}}|< \frac{\delta_{0}}{2}\}}]
\\ & \le \sup_{S_{I}^{x}}\inf_{S_{II}^{x}}\inf_{S_{I}^{y}}\sup_{S_{II}^{y}}\frac{C  ||G||_{L^{\infty}(\Gamma_{\eps})}}{\delta^{\sigma/2}}\big( \mathbb{E}_{S_{I}^{x},S_{II}^{x},S_{I}^{y},S_{II}^{y}}^{(x_{0},y_{0})}[|X_{\overline{\tau}}-Y_{\overline{\tau}}|^{\sigma}\mathds{1}_{\{S_{\overline{\tau}}^{\eps , x_{0}}+S_{\overline{\tau}}^{\eps , y_{0}}\le1 \}}]+ \eps^{\sigma} \big)
\\ & \le \frac{C  ||G||_{L^{\infty}(\Gamma_{\eps})}(\delta^{\sigma}+\eps^{\sigma})}{\delta^{\sigma/2}} \le C  ||G||_{L^{\infty}(\Gamma_{\eps})}\delta^{\sigma/2}.
\end{align*}
We have used \eqref{dtsmes} in the second inequality.
Combining the above estimates, we finally get
\begin{align*}
&\sup_{S_{I}^{x}}\inf_{S_{II}^{x}}\inf_{S_{I}^{y}}\sup_{S_{II}^{y}}\mathbb{E}_{S_{I}^{x},S_{II}^{x},S_{I}^{y},S_{II}^{y}}^{(x_{0},y_{0})}[|u_{\eps}(X_{\overline{\tau}})-u_{\eps}(Y_{\overline{\tau}})|] \le C  ||G||_{L^{\infty}(\Gamma_{\eps})}\delta^{\sigma/2}.
\end{align*}
This implies the desired result.
\end{proof}

\section{Convergence of the value function as $\eps \to 0$}
\label{sec:converge}

Relations between a DPP and its associated equation have been investigated in many preceding studies, for example, \cite{MR2875296,MR3011990,MR3623556}.  
In this section, we take into account the convergence of the function satisfying \eqref{dppobp} as $\eps \to 0$.
We will also verify that the limit of value functions solves \eqref{defcapf} in some sense.

To this end, we first introduce a notion of viscosity solutions.

\begin{definition}[viscosity solution] \label{viscosity}
A function $u \in C(\overline{\Omega})$ is a viscosity solution to \eqref{defcapf} if
for all $x \in \overline{\Omega}$ and $\phi \in C^{2}$ such that $u(x)=\phi(x) $ and $u(y)> \phi(y) $ for $y \neq x$, we have
\begin{align*}
\left\{ \begin{array}{ll}
 \Delta_{p}^{N}\phi(x) \le 0 & \textrm{if $ x \in \Omega$,}\\
\min\big\{ \Delta_{p}^{N}\phi(x) , \gamma G(x)-\big( \langle\mathbf{n} , D\phi\rangle (x)  + \gamma \phi(x)\big) \big\} \le 0 & \textrm{if $ x \in \partial \Omega$,}\\
\end{array} \right.
\end{align*}
and 
for all $x \in \overline{\Omega}$ and $\phi \in C^{2}$ such that $u(x)=\phi(x) $ and $u(y)< \phi(y) $ for $y \neq x$, we have
\begin{align*}
\left\{ \begin{array}{ll}
 \Delta_{p}^{N}\phi (x)\ge 0 & \textrm{if $ x \in \Omega$,}\\
\max\big\{\Delta_{p}^{N}\phi(x),  \gamma G(x)-\big( \langle\mathbf{n} , D\phi\rangle(x)  + \gamma \phi(x)\big)  \big\}\ge 0   & \textrm{if $ x \in \partial \Omega$.}\\
\end{array} \right.
\end{align*}
\end{definition}

Combining Theorem \ref{bdryreg} and Theorem \ref{intlip} with an Arzel\`{a}-Ascoli criterion,
we get the following convergence result. 
%
%
\begin{theorem} \label{convrob}
Let $u_{\eps}$ be the function satisfying \eqref{dppobp} and $G \in C^{1}(\Gamma_{\eps})$ for each $\eps >0$.
Then, there exists a function $u: \overline{\Omega} \to \mathbb{R}^{n}$ and
a subsequence $\{  \eps_{i}\}$ such that $u_{\eps_{i}}$ converges uniformly to $u$ on $\overline{\Omega}$ and
$u$ is a viscosity solution to the problem \eqref{defcapf}.
\end{theorem}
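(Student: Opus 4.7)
First I would combine the uniform bound $\|u_\eps\|_{L^\infty(\Omega)} \le \|G\|_{L^\infty(\Gamma_\eps)}$ from Lemma \ref{eqbdd} with the interior estimate of Lemma \ref{intlip} and the boundary estimate of Theorem \ref{bdryreg}. Patching them by the triangle inequality---applying the boundary estimate when either evaluation point lies in the strip $\{\dist(\cdot,\partial\Omega)\le\delta^{1/2}\}$ and the interior one otherwise---yields a common H\"{o}lder modulus of continuity on $\overline{\Omega}$ for the whole family $\{u_\eps\}_{\eps>0}$. An Arzel\`{a}-Ascoli argument then produces a subsequence $u_{\eps_i}$ converging uniformly on $\overline{\Omega}$ to some $u\in C(\overline{\Omega})$, which inherits the H\"{o}lder regularity.

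\textbf{Interior viscosity property.} Next I would verify, at each $x_0\in\Omega$, that $u$ satisfies $\Delta_p^N u = 0$ in the viscosity sense. Fix $\phi\in C^2$ touching $u$ strictly from above at $x_0$ with $D\phi(x_0)\neq 0$. By uniform convergence there exist near-maxima $x_\eps$ of $u_{\eps_i}-\phi$ with $x_\eps\to x_0$, and for $\eps$ small we have $\dist(x_\eps,\partial\Omega)\ge\eps$, so that $s_\eps(x_\eps)=0$ and $d'_{x_\eps}=\tfrac{1}{2}$; the DPP then collapses to \eqref{towdpp}. Inserting the local bound $u_\eps(y)\le \phi(y) + (u_\eps(x_\eps)-\phi(x_\eps))$ into the right-hand side and invoking the Taylor expansion established in \eqref{inflapvar} and \eqref{add2}, division by $\eps^2$ followed by $\eps\to 0$ produces the viscosity inequality $\Delta_p^N\phi(x_0)\ge 0$. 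The companion inequality for test functions touching from below is symmetric.

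\textbf{Boundary viscosity property.} For $x_0\in\partial\Omega$ and $\phi$ touching $u$ strictly from above with $D\phi(x_0)\ne 0$, I would again take near-maxima $x_\eps\to x_0$ and substitute $\phi$ into the DPP. Repeating, with $\phi$ as the test function, the Taylor analysis leading to \eqref{add1}--\eqref{add4} in the proof of Lemma \ref{taylorpde}, and using the choice $\gamma=(1-\alpha)\gamma_0$, yields an asymptotic inequality of the schematic form
\begin{align*}
0 &\ge \tfrac{\alpha}{2}(d'_{x_\eps})^2 \eps^2\,\Delta_\infty^N\phi(x_\eps) + \tfrac{1-\alpha}{2(n+2)}\eps^2\,\Delta\phi(x_\eps) \\
&\quad - (1-\alpha)\gamma_0 s_\eps(x_\eps)\bigl(\langle \mathbf{n}(\pi_{\partial\Omega}x_\eps), D\phi(x_\eps)\rangle + \gamma_0(\phi(x_\eps)-G(x_\eps))\bigr) + O(\eps s_\eps(x_\eps)) + o(\eps^2).
\end{align*}
Arguing by contradiction, suppose at $x_0$ both $\Delta_p^N\phi(x_0)<0$ strictly and $\langle \mathbf{n},D\phi\rangle+\gamma_0(\phi-G)>0$ strictly. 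By continuity these strict signs persist at $x_\eps$, and no matter how $s_\eps(x_\eps)$ and $\eps^2$ compare along the subsequence, the right-hand side becomes strictly negative for small $\eps$, which contradicts the displayed inequality. Hence $\max\{\Delta_p^N\phi(x_0),\,\gamma_0 G(x_0)-\langle \mathbf{n},D\phi(x_0)\rangle-\gamma_0\phi(x_0)\}\ge 0$, as required by Definition \ref{viscosity}. The subsolution case at boundary points is analogous.

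\textbf{Main obstacle.} The hard part will be the bookkeeping of the two scales $\eps^2$ and $s_\eps(x_\eps)$ in the last step: they need not be comparable as $\eps\to 0$ along the sequence $x_\eps\to x_0$, and the shrinking tug-of-war radius $d'_{x_\eps}\in[0,\tfrac{1}{2}]$ suppresses the $\Delta_\infty^N$ contribution very close to $\partial\Omega$, so one cannot in general extract the pure PDE inequality at a boundary point---the max is exactly what accommodates this dichotomy. The identity $\gamma=(1-\alpha)\gamma_0$ is precisely what aligns the coefficients of the boundary correction in the DPP with the oblique operator $\langle\mathbf{n},Du\rangle+\gamma_0 u$ in the target PDE and is used decisively at the contradiction step.
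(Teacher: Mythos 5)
Your compactness step and interior step coincide with the paper's. The boundary step, however, takes a genuinely different route: the paper fixes a test function $\phi$ with $u-\phi$ having a strict minimum at $x_{0}\in\partial\Omega$, separates the case where the touching points $x_{\eps}$ eventually lie outside $\Gamma_{\eps}$ (which gives the PDE alternative via the interior argument) from the case where a subsequence lies inside $\Gamma_{\eps}$, and in the latter case it divides the expansion of $T_{\eps}^{G}\phi(x_{\eps})-\phi(x_{\eps})$ by $s_{\eps}(x_{\eps})$ and passes to the limit, \emph{directly} obtaining $\langle \mathbf{n},D\phi\rangle(x_{0})+\gamma_{0}\phi(x_{0})\ge\gamma_{0}G(x_{0})$; you instead argue by contradiction, assuming both alternatives of the $\max$ fail strictly. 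That is a perfectly legitimate (and in some ways more uniform) viscosity-theoretic strategy, but your execution has two problems that need repair.

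\textbf{Sign error.} With $\phi$ touching $u$ \emph{from above} at $x_{0}$ and $x_{\eps}$ near-maxima of $u_{\eps}-\phi$, the bound $u_{\eps}(z)\le\phi(z)+(u_{\eps}(x_{\eps})-\phi(x_{\eps}))+\zeta_{\eps}$ and the monotonicity of $T_{\eps}^{G}$ give
\[
u_{\eps}(x_{\eps})=T_{\eps}^{G}u_{\eps}(x_{\eps})\le T_{\eps}^{G}\phi(x_{\eps})+\bigl(1-\gamma s_{\eps}(x_{\eps})\bigr)\bigl(u_{\eps}(x_{\eps})-\phi(x_{\eps})+\zeta_{\eps}\bigr),
\]
which rearranges to $0\le T_{\eps}^{G}\phi(x_{\eps})-\phi(x_{\eps})+o(s_{\eps}+\eps^{2})$. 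After the Taylor expansion your displayed inequality must therefore read $0\le\cdots$, not $0\ge\cdots$. With your stated sign, a strictly negative right-hand side does \emph{not} contradict $0\ge\cdots$, so the contradiction step as written does not parse. (The sign you wrote is the one the paper obtains, but for the opposite touching direction.)

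\textbf{The $d'$-shrinkage issue is real, but you misdiagnose it, and the $\max$ does not fix it by itself.} When $d'_{x_{\eps}}<\tfrac12$ the bracket $\tfrac{\alpha}{2}(d'_{x_{\eps}})^{2}\Delta_{\infty}^{N}\phi+\tfrac{1-\alpha}{2(n+2)}\Delta\phi$ is \emph{not} proportional to $\Delta_{p}^{N}\phi$ and can have either sign even if $\Delta_{p}^{N}\phi(x_{0})<0$ strictly, so your statement that ``no matter how $s_{\eps}$ and $\eps^{2}$ compare, the right-hand side becomes strictly negative'' is not justified as stated. What closes the gap is the observation that $d'_{x_{\eps}}<\tfrac12$ forces $\dist(x_{\eps},\partial\Omega)<\tfrac{\eps}{2}$, hence $d_{\eps}(x_{\eps})<\tfrac12$ and, from \eqref{defs}, $s_{\eps}(x_{\eps})\ge c_{0}\eps$ for a dimensional constant $c_{0}$; in that regime the boundary term $-(1-\alpha)\gamma_{0}s_{\eps}(x_{\eps})\bigl(\langle\mathbf{n},D\phi\rangle+\gamma_{0}(\phi-G)\bigr)$ is $\lesssim -c\,\eps$ and dominates the entire $\eps^{2}$-block and the errors $O(\eps s_{\eps})+o(\eps^{2})=O(\eps^{2})$, producing the contradiction regardless of the sign of the $\eps^{2}$-block. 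In the complementary regime $d'_{x_{\eps}}=\tfrac12$, the $\eps^{2}$ coefficient equals exactly $\tfrac{1}{2(4p+n-6)}\Delta_{p}^{N}\phi(x_{\eps})$ and is $\le -c\eps^{2}$, the boundary term is $\le 0$, and $O(\eps s_{\eps})+o(\eps^{2})$ is eventually dominated by the two negative leading terms. Once you insert this two-regime split, your contradiction argument is complete and correct; as it stands, the ``Main obstacle'' paragraph flags the right difficulty but resolves it by appeal to the $\max$, which is not what saves the day.

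A minor remark: for the normalized $p$-Laplacian one ordinarily restricts the viscosity test functions to those with $D\phi(x_{0})\ne 0$ (or handles the degenerate case separately), which both you and the paper do implicitly.
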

\begin{proof}
For the interior case, we can prove the convergence by employing a similar argument in the proof of \cite[Theorem 4.9]{MR3011990}.

We will prove that
$$\langle\mathbf{n}  , Du\rangle  + \gamma u =\gamma G \qquad \textrm{on } \partial \Omega $$ 
in the viscosity sense.
Let $x \in \partial \Omega$ and $\phi \in C^{2}(\overline{\Omega} )$
such that $u-\phi$ has a strict local minimum at $x$.
Then we observe that
$$\inf_{B_{r}(x)}(u-\phi)= u(x)-\phi(x) \le u(z)-\phi(z) $$
for some $r >0$ any $z \in B_{r}(x)$, and the equality holds when $z=x$.

For simplicity, we just write $\{ u_{\eps} \}$ instead of $\{ u_{\eps_{i}} \}$.
Since $u_{\eps} $ converges to $u$ uniformly,  we have
\begin{align}\label{lu1}\inf_{B_{r}(x)}(u_{\eps}-\phi) < u_{\eps}(z)-\phi(z) 
\end{align}
for all $z\in B_{r}(x)\backslash \{ x\}$ when $\eps >0$ is sufficiently small.
Thus, for any $ \zeta_{\eps}>0  $, we can find a point $x_{\eps} \in B_{r}(x) \cap \overline{\Omega}$ such that
\begin{align}\label{lu2} u_{\eps}(x_{\eps})-\phi(x_{\eps}) \le  u_{\eps}(z)-\phi(z)+ \zeta_{\eps}  \end{align}
for any $z \in B_{r}(x)$ and sufficiently small $\eps >0$.
We also see that $x_{\eps} \to x$ as $\eps \to 0$.
Define $\varphi = \phi +  u_{\eps}(x_{\eps})-\phi(x_{\eps})$.
Then we have $\varphi (x_{\eps})= u_{\eps}(x_{\eps})$ and
\begin{align}\label{lu3}u_{\eps}(z) \ge u_{\eps}(x_{\eps})-\phi(x_{\eps})+\phi(z)-\zeta_{\eps} = \varphi(z)-\zeta_{\eps}
\end{align}
for each $z \in B_{r}(x)$.

Recall that $u_{\eps}$ satisfies $u_{\eps}= T_{\eps}^{G}u_{\eps} $. Hence, we get
\begin{align*}
 u_{\eps}(x_{\eps}) &=  T_{\eps}^{G}u_{\eps}(x_{\eps})
\ge T_{\eps}^{G}\varphi (x_{\eps})-(1-\gamma s_{\eps}(x_{\eps})) \zeta_{\eps}
 = T_{\eps}^{G}\phi (x_{\eps}) -\Lambda_{\eps} + \gamma s_{\eps}(x_{\eps}) \Lambda_{\eps},
\end{align*}
where $ \Lambda_{\eps}=\zeta_{\eps}+\phi (x_{\eps}) -u_{\eps}(x_{\eps}) $.
This implies
\begin{align} \label{zetaep}
\zeta_{\eps} \ge  T_{\eps}^{G}\phi (x_{\eps})- \phi (x_{\eps}) +\gamma s_{\eps}(x_{\eps}) \Lambda_{\eps}.
\end{align}

If there exists $\eps_{0}$ such that $x_{\eps} \not\in \Gamma_{\eps}$ for any $\eps<\eps_{0}$, we can show the desired result by using the argument in the interior case. Thus, it suffices to consider the case that we can choose a subsequence $\{x_{\eps_{j}}\}$ of $\{x_{\eps}\}$ satisfying $x_{\eps_{j}} \in \Gamma_{\eps_{j}}$ for each $j$.
We continue to write $x_{\eps}$ instead of $x_{\eps_{j}}$ for our convenience. 
By using Taylor expansion, we have
\begin{align*}\frac{1}{2}\phi\big(x_{\eps}+\eps d'_{x_{\eps}}\nu\big)+\frac{1}{2}\phi\big(x_{\eps}-\eps d'_{x_{\eps}}\nu \big) & = \phi(x_{\eps})+\frac{1}{2}\langle D^{2}\phi (x_{\eps})\nu, \nu\rangle  
  (d'_{x_{\eps}})^{2}\eps^{2} +o(\eps^{2}) 
  \\ & =  \phi(x_{\eps})+O(\eps^{2})
\end{align*} for  $\nu \in \partial B_{1}$,
and
\begin{align}\label{test1}\begin{split} &  \kint_{B_{\eps}(x_{\eps})\cap \Omega}\phi(y)dy  
\\ & =\phi(x)+\bigg\langle D\phi(x),  \kint_{B_{\eps}(x)\cap \Omega }(y-x)dy \bigg\rangle
\\ &\quad + \frac{1}{2}\bigg\langle D^{2}\phi(x):\kint_{B_{\eps}(x)\cap \Omega }(y-x)\otimes(y-x)dy\bigg\rangle + o(\eps^{2})
\\ &=     \phi(x_{\eps})  -s_{\eps}(x_{\eps}) \langle D\phi(x_{\eps}), \mathbf{n}(\pi_{\partial \Omega}x_{\eps})\rangle  +\frac{\Delta \phi (x_{\eps})}{n+2}\eps^{2}+ O(\eps s_{\eps}(x_{\eps})  )+o(\eps^{2})
\\& =    \phi(x_{\eps})  -s_{\eps}(x_{\eps}) \langle D\phi(x_{\eps}), \mathbf{n}(\pi_{\partial \Omega}x_{\eps})\rangle  + O(\eps s_{\eps}(x_{\eps})  ).\end{split}
\end{align} 
Hence, we see that
\begin{align} \label{test2}\begin{split}
& (1-(1-\alpha)\gamma s_{\eps}(x_{\eps}))
 \bigg\{ \frac{\alpha}{2}\phi\big(x_{\eps}+\eps d'_{x_{\eps}}\nu\big)+\frac{\alpha}{2}\phi\big(x_{\eps}-\eps d'_{x_{\eps}}\nu\big) 
 \\& \qquad \qquad + (1-\alpha) \kint_{B_{\eps}(x_{\eps})\cap \Omega}\phi(y) dy \bigg\}
+  (1-\alpha)\gamma s_{\eps}(x_{\eps})G(x_{\eps}) 
 \\ & =(1-(1-\alpha)\gamma s_{\eps}(x_{\eps}))   \big( \phi(x_{\eps})   -s_{\eps}(x_{\eps}) (1-\alpha)\langle D\phi(x_{\eps}), \mathbf{n}(\pi_{\partial \Omega}x_{\eps})\rangle \big)
 \\ & \qquad +(1-\alpha)\gamma s_{\eps}(x_{\eps})G(x_{\eps}) + O(\eps s_{\eps}(x_{\eps})  )
 \\ & = (1-(1-\alpha)\gamma s_{\eps}(x_{\eps}))   \big( \phi(x_{\eps})   -s_{\eps}(x_{\eps}) (1-\alpha)\langle D\phi(\pi_{\partial \Omega}x_{\eps}), \mathbf{n}(\pi_{\partial \Omega}x_{\eps})\rangle \big)
 \\ & \qquad +(1-\alpha)\gamma s_{\eps}(x_{\eps})G(x_{\eps}) + O(\eps s_{\eps}(x_{\eps})  ).
 \end{split}
\end{align}

Let $\nu_{1}^{\eps} \in \partial B_{1}$ satisfy
$$ \phi\big( x+ \eps d'_{x}\nu_{1}^{\eps} \big)= \inf_{B_{\eps d'_{x}}(x_{\eps})} \phi .  $$
Then, we have
\begin{align}\label{test3}\begin{split}& \frac{\alpha}{2}\sup_{B_{\eps d'_{x}}(x_{\eps})}\phi+\frac{\alpha}{2}\inf_{B_{\eps d'_{x}}(x_{\eps})}\phi + (1-\alpha) \kint_{B_{\eps}(x_{\eps})\cap \Omega}\phi(y) dy
\\ & \ge \frac{\alpha}{2}\phi\big(  x_{\eps}+\eps d'_{x_{\eps}}\nu_{1}^{\eps} \big)+\frac{\alpha}{2}\phi \big(  x_{\eps}-\eps d'_{x_{\eps}}\nu_{1}^{\eps}\big) + (1-\alpha) \kint_{B_{\eps}(x_{\eps})\cap \Omega}\phi(y) dy .
\end{split}
\end{align}
From this, we see that 
\begin{align*}  T_{\eps}^{G}\phi (x_{\eps}) & \ge (1-(1-\alpha)\gamma s_{\eps}(x_{\eps})) \bigg\{ \frac{\alpha}{2}\phi\big(  x_{\eps}+\eps d'_{x_{\eps}}\nu_{1}^{\eps} \big)+\frac{\alpha}{2}\phi \big(  x_{\eps}-\eps d'_{x_{\eps}}\nu_{1}^{\eps}\big) 
 \\ & \qquad \qquad + (1-\alpha) \kint_{B_{\eps}(x_{\eps})\cap \Omega}\phi(y) dy \bigg\}
 +  (1-\alpha)\gamma s_{\eps}(x_{\eps})G(x_{\eps}) 
 \\ & =(1-(1-\alpha)\gamma s_{\eps}(x_{\eps}))   \big( \phi(x_{\eps})   -s_{\eps}(x_{\eps}) (1-\alpha)\langle D\phi(\pi_{\partial \Omega}x_{\eps}), \mathbf{n}(\pi_{\partial \Omega}x_{\eps})\rangle \big) 
 \\ & \qquad +(1-\alpha)\gamma s_{\eps}(x_{\eps})G(x_{\eps}) + O(\eps s_{\eps}(x_{\eps})  )
\\ & =  \phi(x_{\eps})  +(1-\alpha) \big(\gamma  G(x_{\eps})- (\gamma \phi(x_{\eps})+  \langle D\phi(\pi_{\partial \Omega}x_{\eps}), \mathbf{n}(\pi_{\partial \Omega}x_{\eps})\rangle ) \big) s_{\eps} (x_{\eps}) \\ & \qquad + O(\eps s_{\eps}(x_{\eps})  ) .
\end{align*}
Combining the above estimate with \eqref{zetaep}, we get
\begin{align}\label{bdconde}\begin{split}&  \zeta_{\eps} \ge  (1-\alpha) \big(\gamma  G(x_{\eps})- (\gamma \phi(x_{\eps})+  \langle D\phi(\pi_{\partial \Omega}x_{\eps}), \mathbf{n}(\pi_{\partial \Omega}x_{\eps})\rangle ) \big)s_{\eps} (x_{\eps})\\ & \qquad + \gamma  s_{\eps}(x_{\eps}) \Lambda_{\eps} + O(\eps s_{\eps}(x_{\eps})  ).\end{split}
\end{align}
By choosing $\zeta_{\eps} = O(\eps s_{\eps}(x_{\eps})  )$, dividing by $(1-\alpha)s_{\eps}(x_{\eps})$ and taking the limit as $\eps \to 0$, we obtain
\begin{align*}0& \ge  \gamma  G(x)- \gamma  \phi(x)- \langle D\phi(x), \mathbf{n}(x)\rangle ,
\end{align*}
that is, 
$$ \langle\mathbf{n}  , D\phi\rangle (x)+\gamma  \phi(x) \ge \gamma G(x).$$
We note that we have used the $C^1$-regularity of $G$ to get the above inequalities.

Similarly, we can also obtain the reversed inequality.
Therefore, $u$ is a viscosity solution of \eqref{defcapf}.
\end{proof}

\begin{remark}
Theorem \ref{convrob} yields that value functions satisfying \eqref{dppobp} 
converges to a viscosity solution to \eqref{defcapf}.
One can also deduce H\"{o}lder regularity of the limit of the value function by using Theorem \ref{intlip} and Theorem \ref{bdryreg}.
However, these results do not imply that every solution of \eqref{defcapf} satisfies such regularity.
To guarantee this, the uniqueness of solutions to \eqref{defcapf} should be provided,
but it is still open.
\end{remark}

\section{Games for oblique derivative boundary conditions}
\label{sec:oblique}

In this section, we are concerned with games related to a generalization of the boundary condition in \eqref{defcapf}.
For a vector-valued function $\beta$, an oblique derivative boundary condition is given by 
$$\langle \beta, Du\rangle + \gamma u = G \quad \textrm{on } \partial \Omega$$
with $|\langle \beta, \mathbf{n}\rangle | \ge \delta_{0} $ on $\partial \Omega$ for given $\delta_{0}>0$.
This boundary value condition can be understood as the vector $\beta$ forming an angle bigger than a certain level with the boundary surface.
Oblique derivative boundary value problems have been studied for the past several decades.
We refer the reader to \cite{MR833695,MR923448,MR1994804} for elliptic case and \cite{MR765964,MR1111473,MR1192119} for parabolic case.
For fully nonlinear equations, one can find the existence and uniqueness of viscosity solutions in \cite{MR1104812,MR2070626}
and regularity estimates in \cite{MR2254613,MR3780142,MR4223051}.

In terms of game theory, this boundary value condition heuristically represents the situation that the random walk near the boundary occurs over an ellipsoid associated with $\beta$, instead of a ball.
We consider a stochastic process related to the following oblique derivative boundary value problem
\begin{align} \label{probge}
\left\{ \begin{array}{ll}
\Delta_{p}^{N} u=0 & \textrm{in $ \Omega$,}\\
\langle \beta,Du\rangle + \gamma  u = \gamma G & \textrm{on $ \partial \Omega$,}\\
\end{array} \right.
\end{align} 
where $\beta  =(\beta_{1},\dots, \beta_{n}): \Gamma_{\eps}  \to \mathbb{R}^{n}$ is $C^{1}$
with 
$\beta(x)=\mathbf{n}(\pi_{\partial \Omega}x)$ for any $x \in  \Gamma_{\eps}\backslash \Gamma_{\eps/2}$
and $\big|\langle \mathbf{n}(\pi_{\partial \Omega}x), \frac{\beta(x)}{|\beta(x)|} \rangle\big| \ge\zeta_{0}$ in $\Gamma_{\eps}$ for some $\zeta_{0} \in (0,1)$.
Similarly to Section 2, we investigate H\"{o}lder regularity of corresponding game values.

To construct stochastic games related to \eqref{probge},
we first need to consider ellipsoids associated with $\beta$ and investigate their geometrical properties.
And then, we show the counterparts of Lemma \ref{mcs} and \ref{esttaus} for the oblique case,  Lemma \ref{mcsob} and \ref{esttausob}.
The main theorem of this section, Theorem \ref{bdryregob}, which gives
H\"{o}lder regularity for functions satisfying \eqref{dppobpgen}, can be proved after these preparations.
The convergence of the value function will be considered in Theorem \ref{convob}.


For $x=(x_{1},\dots,x_{n} )\in \mathbb{R}^{n} $, we use the following notation
$x'= (x_{1},\dots,x_{n-1} )$ and $x=(x',x_{n})$. 

Fix a constant vector $ \overline{\beta} \in \mathbb{R}^{n}$ with $ \overline{\beta}_{n} =1$.
Let $\mathcal{T}_{\overline{\beta}}:\mathbb{R}^{n} \to \mathbb{R}^{n}$ be a linear transformation such that
$\mathcal{T}_{\overline{\beta}}\mathbf{e}_{i}=  \mathbf{e}_{i}$ for $i=1,\dots, n-1$ and 
$\mathcal{T}_{\overline{\beta}}\mathbf{e}_{n} = \overline{\beta}$.
In this case, we can observe that the unit ball $B_{1}$
is transformed by the ellipsoid
\begin{align}\label{ellipsoid}|x|^{2}-2\langle\overline{\beta} ' , x'\rangle x_{n} +  (|\overline{\beta}'|^{2}+1)x_{n}^{2}
=: \langle V_{\overline{\beta}}x,x \rangle \le 1 ,
\end{align}
where $ V_{\overline{\beta}}=(a_{ij})_{1\le i,j \le n} $ is a symmetric matrix associated with $\overline{\beta}$ with
\begin{equation}a_{ij} =
\left\{ \begin{array}{llll}
1& \textrm{if $i=j\neq n  $,}\\
|\overline{\beta}'|^{2}+1 & \textrm{if $ i=j=n$,}\\
-\overline{\beta}_{i} & \textrm{if $ i\neq n, j=n $ or $i=n, j\neq n $}\\
0 & \textrm{otherwise.}\\
\end{array} \right.
\end{equation}
We can check that $\det {A_{\overline{\beta}}}=1$.

Under the assumption $\frac{\overline{\beta}_{n}}{|\overline{\beta}|}=\frac{1}{\sqrt{|\overline{\beta}'|^{2}+1}}>\zeta_{0} $, we have 
$|\overline{\beta}'|^{2}<\frac{1}{\zeta_{0}^{2}} -1$.
We also remark that the cross-section of this ellipsoid along $x_{n}=c$ ($0\le c<1$) is a $(n-2)$-sphere.
We also set
$$ E_{r}^{ \overline{\beta} }:= \big\{ x\in \mathbb{R}^{n} :\langle V_{\overline{\beta}}x,x \rangle\le r^{2}   \big\} $$
for each $\overline{\beta}$.

Fix $y\in \Gamma_{\eps}$
and consider a rotation $P_{y}$ such that $P_{y} \mathbf{e}=\mathbf{n}(\pi_{\partial \Omega}y)$. 
For convenience, we write 
$E_{r}^{\beta, y}=P_{y}E_{r}^{\beta(y)} $.
Now we denote by
$$E_{r}^{\beta,y}(y)= y+E_{r}^{\beta,y} \qquad \textrm{and} \qquad E_{r,d}^{\beta,y}(y) =y+\big(  E_{r}^{\beta,y}   \cap \{y_{k} <d \}\big). $$
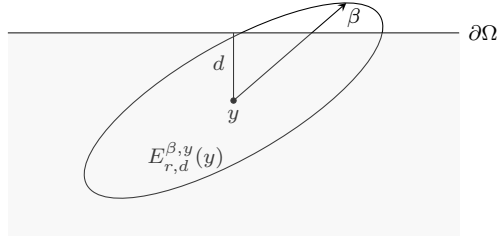
\begin{figure}
\centering
\begin{tikzpicture}

\draw[rotate=-60] (0, 0) ellipse (0.75cm and 2.25cm);

\draw (-3.0, 0.9) -- (3.0, 0.9) node[right]{\footnotesize $\partial \Omega$};
\filldraw[black]  (0,0) circle (1pt)  node [anchor=north] {\footnotesize $y$} ;
\draw [-stealth](0,0) -- (1.5, 1.299) node at (1.6,1.1) {\footnotesize $\beta$} ;
\draw (0,0) -- (0, 0.9) node at (-0.2,0.5) {\footnotesize $d$} ;
\node[above left]   at (0,  -1.1)  {\footnotesize $E_{r,d}^{\beta, y}(y)$}; 
\fill[gray!20,nearly transparent] (-3,0.9) -- (-3,-1.8) -- (3,-1.8) -- (3,0.9) -- cycle;
\end{tikzpicture}
\caption{An example of the ellipsoid $E_{r}^{\beta,y}(y)$ }
\label{fig2}
\end{figure}

Now we consider the following DPP
\begin{align} \label{dppobpgen} \begin{split}
u(x)& = \big(1-(1-\alpha)\gamma s_{\eps}(x) \big)\bigg\{\frac{\alpha}{2}\bigg( \sup_{B_{\eps d'_{x}}(x)} u +  
\inf_{B_{\eps d'_{x}}(x)} u  \bigg) + (1-\alpha)  \kint_{E_{\eps}^{ \beta ,x}(x)\cap \Omega}\hspace{-1em}u (y) dy \bigg\} 
\\ & \quad+ (1-\alpha)\gamma s_{\eps}(x) G(x).
\end{split}
\end{align}
We see that $E_{\eps}^{ \beta ,x}(x)=B_{\eps}(x)$ for all $x \in \Omega \backslash \Gamma_{\eps/2}$ by the assumption.

We define a sequence of random variables 
$\{ X_{k}^{\eps, x_{0}}\}_{k=0}$ by
$X_{0}^{\eps, x_{0}} \equiv x_{0}$ and
\begin{align*}X_{k}^{\eps, x_{0}} = X_{k-1}^{\eps, x_{0}}
+\eps w_{k}^{X_{k-1}^{\eps, x_{0}}},
\end{align*}
where $w_{k}$ is a random vector chosen in $B_{c_{0}\eps}(X_{k-1}^{\eps, x_{0}})$
with \begin{align} \label{obmove}X_{k-1}^{\eps, x_{0}}+\eps w_{k}^{X_{k-1}^{\eps, x_{0}}} \in E_{r}^{\beta,X_{k-1}^{\eps, x_{0}}}(X_{k-1}^{\eps, x_{0}})\cap \Omega \end{align}
for each $k=1,2,\dots$, where $c_{0}=c_{0}(\zeta_{0})$ satisfies
$E_{1}^{\beta,x} \subset B_{c_{0}} $
for all $x \in \Omega$. 

First, we observe the following proposition about $E^{\beta}$ for given $\beta$.
It can be derived by using \cite[Lemma 2.3]{MR4684373} since 
$\mathcal{T}_{\beta}$ is a linear transformation for any given vector $\beta$.
  This will be used to show the convergence of the value function
 as $\eps\to0$, Theorem \ref{convob}. 
\begin{proposition}\label{besob}
Then, for any $\eps << 1$, $x \in \overline{\Omega}$ and given $\beta: \Gamma_{\eps} \to \mathbb{R}^{n}$, we have the followings:
\begin{itemize}
\item[(i)] $\frac{|E_{\eps}^{\beta,x}(x)\backslash \Omega|}{|E_{\eps}^{\beta}|} \le
C\frac{s_{\eps}(x)}{\eps}$ for any $x \in \overline{\Omega}$,
\item[(ii)] $ \kint_{E_{\eps}^{\beta,x}(x)\cap \Omega}(y -x) dy = -s_{\eps}(x)\beta(x)  +O(\eps s_{\eps}(x))$ for any $x \in \overline{\Omega}$.
\end{itemize}
\end{proposition}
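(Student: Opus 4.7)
My plan is to reduce both statements to their analogues for balls --- Lemma 2.3 of \cite{MR4684373}, equivalently \eqref{bdryintav} together with its companion cap-volume estimate --- via a unit-Jacobian linear change of variables. Define the affine map $\Phi_x(z) := x + P_x \mathcal{T}_{\beta(x)} z$. Since $P_x$ is a rotation and $\det \mathcal{T}_{\beta(x)} = 1$, $\Phi_x$ is volume-preserving, and by construction
\[
\Phi_x(B_\eps) \;=\; x + P_x E_\eps^{\beta(x)} \;=\; E_\eps^{\beta,x}(x).
\]
Setting $\tilde\Omega := \Phi_x^{-1}(\Omega)$, the map $\Phi_x$ restricts to a volume-preserving bijection $B_\eps \cap \tilde\Omega \to E_\eps^{\beta,x}(x) \cap \Omega$.

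The key geometric point is that $\mathcal{T}_{\beta(x)}$ fixes the $n$-th coordinate: its last row is $\mathbf{e}_n^T$, so $w = \mathcal{T}_{\beta(x)} z$ satisfies $w_n = z_n$. In the intermediate rotated frame $w = P_x^{-1}(y-x)$, the $C^{1,1}$-hypersurface $\partial\Omega$ near $\pi_{\partial\Omega}(x)$ is the graph $w_n = \eps d_\eps(x) + \varphi(w')$ with $\varphi(0) = 0$, $\nabla\varphi(0) = 0$, hence $|\varphi(w')| \le C|w'|^2$. Pulling back through $\mathcal{T}_{\beta(x)}^{-1}$ preserves the horizontal-graph structure, yielding $z_n = \eps d_\eps(x) + \varphi(z' + z_n \beta'(x))$, which is again a $C\eps^2$-perturbation of the flat hyperplane $\{z_n = \eps d_\eps(x)\}$ inside $B_\eps$. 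Consequently, $\tilde\Omega \cap B_\eps$ coincides with the cut ball $B_{\eps, \eps d_\eps(x)}^n$ up to a thin shell of volume $O(\eps^{n+1})$ around that hyperplane --- precisely the configuration to which the ball lemma applies.

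Given this reduction, (i) is immediate from $|E_\eps^{\beta,x}(x)\setminus\Omega| = |B_\eps\setminus\tilde\Omega|$, the cap-volume estimate $|B_\eps\cap\{z_n\ge \eps d_\eps(x)\}| \lesssim \eps^{n-1} s_\eps(x)$ (which is the ball case of the inequality), and the identity $|E_\eps^\beta| = |B_\eps|$. For (ii), the change of variables yields
\[
\kint_{E_\eps^{\beta,x}(x)\cap\Omega}(y-x)\,dy \;=\; P_x\mathcal{T}_{\beta(x)}\kint_{B_\eps\cap\tilde\Omega}z\,dz \;=\; P_x\mathcal{T}_{\beta(x)}\bigl(-s_\eps(x)\mathbf{e}_n + O(\eps s_\eps(x))\bigr),
\]
where the second equality invokes Lemma 2.3 of \cite{MR4684373} in $z$-coordinates, and the identity $\mathcal{T}_{\beta(x)}\mathbf{e}_n = \beta(x)$ (in the local frame, matched to the global frame through $P_x$) produces the announced leading term $-s_\eps(x)\beta(x)$.

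The most delicate point I anticipate is ensuring that the $O(\eps s_\eps(x))$ curvature term genuinely survives the linear distortion $P_x\mathcal{T}_{\beta(x)}$ --- not merely in order of magnitude, but with an $x$-independent constant. This is secured by the obliqueness bound $|\langle \beta,\mathbf{n}\rangle| \ge \zeta_0$, which provides uniform control on the operator norms of both $\mathcal{T}_{\beta(x)}$ and $\mathcal{T}_{\beta(x)}^{-1}$, so that the tangential perturbation $\varphi(z' + z_n\beta'(x))$ stays of the same quadratic order as $\varphi(w')$ and the $O(\eps s_\eps)$ bookkeeping transfers cleanly between the two frames.
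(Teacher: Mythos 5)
Your proof is correct and takes the same route the paper intends: the paper gives no argument for Proposition~\ref{besob} beyond the single remark that it ``can be derived by using \cite[Lemma 2.3]{MR4684373} since $\mathcal{T}_{\beta}$ is a linear transformation,'' and your write-up fills in exactly what that remark glosses over. The decisive observation --- that the last row of $\mathcal{T}_{\overline{\beta}}$ is $\mathbf{e}_n^T$, so the unit-Jacobian pullback $\Phi_x^{-1}$ preserves the cutting height $z_n = \eps d_{\eps}(x)$ and hence reproduces the cut-ball picture $B_{\eps,\eps d_{\eps}(x)}^n$ inside $B_\eps$ --- is what makes the reduction legitimate, and you identify it explicitly. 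Your accounting of the implicit graph $z_n = \eps d_{\eps}(x) + \varphi(z' + z_n\overline{\beta}')$ as an $O(\eps^2)$ perturbation of the flat plane, the use of $\det\mathcal{T}_{\overline{\beta}}=1$ for $|E_\eps^{\beta}|=|B_\eps|$, the identity $\mathcal{T}_{\beta(x)}\mathbf{e}_n = \beta(x)$ carried back through $P_x$ to give the leading term, and the role of $|\langle\beta,\mathbf{n}\rangle| \ge \zeta_0$ in keeping $\|\mathcal{T}_{\beta(x)}^{\pm1}\|$ and hence the $O(\eps s_\eps)$ constants uniform in $x$ are all correct. One small point worth making explicit: because the paper assumes $\beta \equiv \mathbf{n}(\pi_{\partial\Omega}\cdot)$ on $\Gamma_\eps\setminus\Gamma_{\eps/2}$, the genuinely elliptical case occurs only for $d_{\eps}(x)\le\tfrac12$, where $s_{\eps}(x)\gtrsim\eps$; this is what guarantees that your $O(\eps^{n+1})$ shell and the drift in the effective cutting height are both absorbed into the $O(\eps s_\eps(x))$ error, and it would tighten the argument to flag that case split.
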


By Proposition \ref{besob}, we have
\begin{align*}
\bigg\langle Du(x), \kint_{E_{\eps}^{ \beta ,x}(x)\cap \Omega}(y-x) dy \bigg\rangle
& = -s_{\eps}(x) \langle Du(x), \beta(x) \rangle + O(\eps s_{\eps}(x)  ) 
\\ & = -s_{\eps}(x) \langle Du(\pi_{\partial \Omega}x), \beta(\pi_{\partial \Omega}x)\rangle + O(\eps s_{\eps}(x)  ) 
\\ & = \gamma s_{\eps}(x)(u(\pi_{\partial \Omega}x)-G(\pi_{\partial \Omega}x))
+ O(\eps s_{\eps}(x)  )
\end{align*}
provided $u\in C^{2}(\overline{\Omega})$ and $\beta \in C^{1}(\Gamma_{\eps_{0}})$.
We also see that
\begin{align*}
&\bigg\langle D^{2}u(x):\kint_{E_{\eps}^{ \beta ,x}(x)\cap \Omega }(y-x)\otimes(y-x)dy\bigg\rangle 
\\&= \frac{\langle V_{\beta(x)}^{-1} :D^{2}u(x) \rangle}{n+2}\eps^{2}+O(\eps s_{\eps}(x)) 
\\&= \frac{\langle V_{\beta(x)}^{-1}-I_{n} :D^{2}u(x) \rangle + \Delta u(x)}{n+2}\eps^{2}+O(\eps s_{\eps}(x)).
\end{align*}

Combining the above results with 
\eqref{inflapvar},
we obtain
\begin{align*}&\frac{\alpha}{2}\bigg( \sup_{B_{\eps d'_{x}}(x)} u +  
\inf_{B_{\eps d'_{x}}(x)} u  \bigg) + (1-\alpha)  \kint_{E_{\eps}^{ \beta ,x}(x)\cap \Omega}u (y) dy  
\\&=  u(x)+\frac{\alpha}{2}\Delta_{\infty}^{N}u(x)(d'_{x}\eps)^{2}+
(1-\alpha)\frac{\langle V_{\beta}^{-1}(x)-I_{n} :D^{2}u(x) \rangle + \Delta u(x)}{n+2}\eps^{2}
\\ &\quad  
+ (1-\alpha)\big( \gamma s_{\eps}(x)(u(\pi_{\partial \Omega}x)-G(\pi_{\partial \Omega}x))\big) + O(\eps s_{\eps}(x)  )+o(\eps^{2}).
\end{align*}
Repeating a similar calculation in the proof of Proposition \ref{taylorpde},
we derive the following result. 
 
\begin{proposition}\label{mvcob}
Let $u \in C^{2}(\overline{\Omega})$ be a function solving the problem \eqref{probge} with $\beta, G \in C^{1}(\Gamma_{r_{0}})$ for some $r_{0}>0$.
Assume that $Du(x)\neq 0$ for each $x\in \overline{\Omega}$.
Then we have 
\begin{align} \label{appob} \begin{split}
u(x)& = \big(1-(1-\alpha)\gamma s_{\eps}(x) \big)\bigg\{\frac{\alpha}{2}\bigg( \sup_{B_{\eps d'_{x}}(x)} u +  
\inf_{B_{\eps d'_{x}}(x)} u  \bigg) + (1-\alpha)  \kint_{E_{\eps}^{ \beta ,x}(x)\cap \Omega}\hspace{-1em}u (y) dy \bigg\} 
\\ & \quad+ (1-\alpha)\gamma s_{\eps}(x) G(x)+O(\eps s_{\eps}(x)  )+o(\eps^{2}),\end{split}
\end{align}
where $\alpha =\frac{4(p-2)}{4p+n-6}$ for every $x \in \overline{\Omega}$ and $\eps<<r_0$.
\end{proposition}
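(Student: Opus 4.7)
The plan is to mirror the three-case argument of Lemma \ref{taylorpde}, taking advantage of the fact that most of the hard work has already been done in the discussion preceding the proposition. Specifically, combining the sup/inf expansion \eqref{inflapvar} with Proposition \ref{besob} and the second moment computation over the ellipsoid yields
\begin{align*}
&\frac{\alpha}{2}\bigl( \sup_{B_{\eps d'_{x}}(x)} u + \inf_{B_{\eps d'_{x}}(x)} u \bigr) + (1-\alpha) \kint_{E_{\eps}^{\beta,x}(x)\cap\Omega} u(y)\,dy \\
&= u(x) + \frac{\alpha}{2}\Delta_{\infty}^{N}u(x)(d'_{x}\eps)^{2} + (1-\alpha)\frac{\langle V_{\beta(x)}^{-1}-I_{n} : D^{2}u(x)\rangle + \Delta u(x)}{n+2}\eps^{2} \\
&\quad + (1-\alpha)\gamma_{0} s_{\eps}(x)\bigl(u(\pi_{\partial\Omega}x)-G(\pi_{\partial\Omega}x)\bigr) + O(\eps s_{\eps}(x)) + o(\eps^{2}).
\end{align*}
Compared to Lemma \ref{taylorpde} the only genuinely new object is the trace $\langle V_{\beta(x)}^{-1}-I_{n}:D^{2}u(x)\rangle\,\eps^{2}$ arising because the averaging region is an ellipsoid rather than a half-ball.

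Next I would split into the three regimes used in Lemma \ref{taylorpde}. When $\dist(x,\partial\Omega)\ge \eps$ one has $E_{\eps}^{\beta,x}(x)=B_{\eps}(x)$, so $V_{\beta(x)}=I_{n}$ and $s_{\eps}(x)=0$; the remaining $\eps^{2}$ terms collapse via $\Delta_{p}^{N}u=0$ with $\alpha=\frac{4(p-2)}{4p+n-6}$ exactly as in the proof of Lemma \ref{taylorpde}, giving $u(x)+o(\eps^{2})$. When $\frac{\eps}{2}\le \dist(x,\partial\Omega)<\eps$ the standing assumption on $\beta$ still forces $\beta(x)=\mathbf{n}(\pi_{\partial\Omega}x)$, hence $V_{\beta(x)}=I_{n}$ again, and the computation is identical to the intermediate case of Lemma \ref{taylorpde}, producing $u(x)+(1-\alpha)\gamma_{0}s_{\eps}(x)(u(x)-G(x))+O(\eps s_{\eps}(x))+o(\eps^{2})$ after replacing $u(\pi_{\partial\Omega}x)$ and $G(\pi_{\partial\Omega}x)$ by $u(x),G(x)$ using $C^{2}$-regularity.

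In the remaining boundary strip $\dist(x,\partial\Omega)<\frac{\eps}{2}$ the vector $\beta$ can genuinely differ from $\mathbf{n}$, so $V_{\beta(x)}^{-1}-I_{n}$ need not vanish. Here I exploit the fact that $s_{\eps}(x)$ is comparable to $\eps$ from below, which lets one absorb \emph{every} $\eps^{2}$ contribution—including the new one $\frac{(1-\alpha)}{n+2}\langle V_{\beta(x)}^{-1}-I_{n}:D^{2}u(x)\rangle\eps^{2}$—into the remainder $O(\eps s_{\eps}(x))$, as is done for $\Delta_{\infty}^{N}u$ and $\Delta u$ in the last case of Lemma \ref{taylorpde}. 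This reduces the expansion to $u(x)+(1-\alpha)\gamma_{0}s_{\eps}(x)(u(x)-G(x))+O(\eps s_{\eps}(x))$.

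Finally, as in Lemma \ref{taylorpde}, I would solve for $u(x)$ in the latter two cases using the expansion $(1+(1-\alpha)\gamma_{0}s_{\eps}(x))^{-1}=1-(1-\alpha)\gamma_{0}s_{\eps}(x)+O(\eps s_{\eps}(x))$; combining the three cases gives \eqref{appob}. The only conceptual point where this proof departs from the normal-direction case is the handling of the anisotropic second-moment correction $V_{\beta(x)}^{-1}-I_{n}$, and the hypothesis $\beta=\mathbf{n}$ on $\Gamma_{\eps}\setminus\Gamma_{\eps/2}$ is exactly what quarantines that correction to the innermost strip, where the $\eps\,s_{\eps}$-absorption is available. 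I expect this interplay between the domain of $\beta$ and the regimes of $s_{\eps}$ to be the only delicate point; beyond it the argument is a bookkeeping exercise parallel to Lemma \ref{taylorpde}.
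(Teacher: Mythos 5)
Your proposal is correct and follows essentially the same route as the paper, which simply states the anisotropic second-moment expansion preceding the proposition and then says to repeat the calculation of Lemma \ref{taylorpde}. You correctly observe that the standing hypothesis $\beta=\mathbf{n}\circ\pi_{\partial\Omega}$ on $\Gamma_\eps\setminus\Gamma_{\eps/2}$ forces $V_{\beta(x)}^{-1}=I_n$ outside the innermost strip (so the new correction term is invisible there), and that inside $\Gamma_{\eps/2}$ the lower bound $s_\eps(x)\gtrsim\eps$ lets the $\langle V_{\beta(x)}^{-1}-I_n:D^2u(x)\rangle\eps^2$ term be absorbed into $O(\eps s_\eps(x))$ exactly as the $\Delta_\infty^N u$ and $\Delta u$ terms are in the proof of Lemma \ref{taylorpde}; the remaining steps are the same algebraic inversion.
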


We define an operator $T_{\eps}^{\beta, G}$ to be
\begin{align} \label{appobop} \begin{split}
&T_{\eps}^{\beta, G}u(x)\\& = \big(1-(1-\alpha)\gamma s_{\eps}(x) \big)\bigg\{\frac{\alpha}{2}\bigg( \sup_{B_{\eps d'_{x}}(x)} u +  
\inf_{B_{\eps d'_{x}}(x)} u  \bigg) + (1-\alpha)  \kint_{E_{\eps}^{ \beta ,x}(x)\cap \Omega}\hspace{-1em}u (y) dy \bigg\} 
\\ & \quad+ (1-\alpha)\gamma s_{\eps}(x) G(x).\end{split}
\end{align}
Then for any $x,z \in \Gamma_{\eps/2}$, 
we have 
\begin{align*}
& |T_{\eps}^{\beta,G}u(x)- T_{\eps}^{\beta,G}u(z)|\\ &=\bigg|
( 1-  (1-\alpha)\gamma s_{\eps}(x)) \bigg\{\frac{\alpha}{2}\bigg( \sup_{B_{\eps d'_{x}}(x)} u+  
\inf_{B_{\eps d'_{x}}(x)} u \bigg) + (1-\alpha)  \kint_{E_{\eps}^{ \beta ,x}(x)\cap \Omega}u(y) dy \bigg\}
\\ & \quad  - ( 1-  (1-\alpha)\gamma s_{\eps} (z) ) \bigg\{\frac{\alpha}{2}\bigg( \sup_{B_{\eps d'_{z}}(z)} u+  
\inf_{B_{\eps d'_{z}}(z)} u \bigg) + (1-\alpha)  \kint_{E_{\eps}^{ \beta ,z}(z)\cap \Omega}u(y) dy \bigg\}\bigg|
\\ & \quad +  (1-\alpha)\big|\gamma s_{\eps}(x)G(x)
 -   \gamma s_{\eps}(z)G(z)\big|
\\ & \le \omega_{s_{\eps}}(|x-z|)  ||u||_{L^{\infty}(\Omega)}+ \omega_{u} (|x-z|) 
\\ & \quad + ||u||_{L^{\infty}(\Omega)}\rho_{\beta} (||\beta||_{L^{\infty}(\partial\Omega)}|x-z|)+\gamma  \omega_{s_{\eps}}(|x-z|)||F||_{L^{\infty}(\Gamma_{\eps})}+ \gamma\omega_{G}(|x-z|) 
\end{align*}
for some continuous function $\rho_{\beta}$ such that 
$$\frac{\big(|E_{1}^{ \beta ,x}(x)\backslash E_{1}^{ \beta ,z}(z)\big)\cup \big(|E_{\eps}^{ \beta ,z}(z)\backslash E_{\eps}^{ \beta ,x}(x)\big)|}{\eps^{n}} \le \rho_{\beta}(|x-y|).$$
Note that this is possible since $\beta$ is continuous.
Therefore, we deduce that $ T_{\eps}^{\beta, G}$ maps $C(\overline{\Omega})$ into itself.
Corresponding results to Lemma \ref{eqbdd} $-$ Lemma \ref{inin} can also be proved similarly to Section 1. Hence, we guarantee the existence and uniqueness of the value function
satisfying \eqref{dppobpgen}.
Now we can consider a stochastic game as in Section \ref{ss:settinggame}.
We set the sequence of vector-valued random variables $\{ X_{k}^{\eps, x_{0}}\}_{k=0}$ as \eqref{gadef}, but $w_k$ is randomly chosen as \eqref{obmove} when $$ \frac{\alpha}{2} (1-(1-\alpha)\gamma s_{\eps}(X_{k-1})) < \xi_{k-1} < (1-\frac{\alpha}{2}) (1-(1-\alpha)\gamma s_{\eps}(X_{k-1})).$$
The value functions for Player I and II are also samely defined as \eqref{vf1} and \eqref{vf2}.

We can find the interior regularity result for \eqref{dppobpgen} in \cite[Theorem 6.1]{MR4684385}. The following result gives a boundary H\"{o}lder estimate for the DPP.
\begin{theorem}\label{bdryregob}
Let $u_{\eps}$ be the function satisfying \eqref{dppobpgen}. 
There exists $\delta_{0}\in(0,1)$ such that for every $\delta \in (0,\delta_{0})$ and $x_{0},y_{0} \in \overline{\Omega}$ 
with $|x_{0}-y_{0}|\le \delta$ and $$\dist (x_{0},\partial \Omega),\dist (y_{0},\partial \Omega)\le \delta^{1/2}.$$ Then, for some $\sigma \in (0,1)$ in Lemma \ref{intlip}, we have
\begin{align*}
|u_{\eps}(x_{0})-u_{\eps}(y_{0})|\le  ||G||_{L^{\infty}(\Gamma_{\eps})}C\delta^{\sigma/2} 
\end{align*}
for some $C$ depending on $n, \alpha, \gamma, \sigma,  \zeta_{0}$ and $\Omega$ and $\eps << \delta$.
\end{theorem}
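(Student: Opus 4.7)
The plan is to follow the three-part scheme of the proof of Theorem \ref{bdryreg}, upgrading each ingredient to the oblique setting in which the random-walk step of \eqref{dppobpgen} uses the ellipsoid $E_\eps^{\beta,x}(x)$ in place of $B_\eps(x)$, while the tug-of-war step in $B_{\eps d'_x}(x)$ is unchanged. Setting $h = \delta^{1/2}$ and $\overline{\tau} = \overline{\tau}^{\eps,\rho,h,x_0} \wedge \overline{\tau}^{\eps,\rho,h,y_0}$, I adopt the decomposition \eqref{esdecomp}. The first two summands will be controlled by the optional-stopping argument built around the martingale $M_k$ of \eqref{mgm}, and the third (coupling-type) summand by a supermartingale estimate for $|X_k - Y_k|^2$.

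Before carrying out these three estimates I first establish the oblique counterparts of Lemmas \ref{mcs} and \ref{esttaus}. For the mean-value comparison, the Taylor expansion of $\phi(z) = |z-y_0|^{-q}$ combined with Proposition \ref{besob}(ii) yields
\[ \kint_{E_\eps^{\beta,x_0}(x_0) \cap \Omega} |z - y_0|^{-q}\, dz \ge |x_0 - y_0|^{-q} + C_1(s_\eps(x_0) + \eps^2), \]
since the first-order drift contributes the positive $\gamma_0 s_\eps(x_0)$ term by the angle condition $|\langle \beta, \mathbf{n} \rangle| \ge \zeta_0$, and the second-order moment $\tfrac{\eps^2}{n+2} V_{\beta(x_0)}^{-1} + O(\eps s_\eps(x_0))$ has positive trace against $D^2\phi(x_0)$ provided $q$ exceeds a threshold depending only on $n$ and $\zeta_0$ (the eigenvalues of $V_\beta^{-1}$ being bounded in terms of $\zeta_0$). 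Together with the tug-of-war contribution exactly as in \eqref{mrge}, this gives Lemma \ref{mcsob}. The stopping-time estimate of Lemma \ref{esttausob} then follows by applying the optional stopping theorem to $Q_k = |X_k - Z^\rho(X_k)|^{-n} - C_0(k\eps^2 + S_k^{\eps,x_0})$, producing $\sup_{S_I}\mathbb{E}_{S_I,S_{II}^{\ast}}^{x_0}[\eps^2\overline{\tau} + S_{\overline{\tau}}^{\eps,x_0}] \le C\overline{r}^{-n-1}(h+\eps)$.

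With these tools the first two pieces of the decomposition are bounded by $C\gamma \|G\|_{L^\infty(\Gamma_\eps)}\overline{r}^{-n-1}\delta^{1/2}$ through the optional-stopping identity $u_\eps(x_0) = \sup_{S_I}\inf_{S_{II}}\mathbb{E}_{S_I,S_{II}}^{x_0}[M_{\overline{\tau}}]$, exactly as in the proof of Theorem \ref{bdryreg}. For the coupling piece I construct the supermartingale $Q_k = |X_k - Y_k|^2 e^{-C(S_k^{\eps,x_0}+S_k^{\eps,y_0})} - C\eps(S_k^{\eps,x_0}+S_k^{\eps,y_0})$, split the expectation over the three events in \eqref{decomp}, and use the interior H\"older estimate of Lemma \ref{intlip} on the event where $\dist(X_{\overline{\tau}},\partial\Omega) > \delta^{1/2}$ and $|X_{\overline{\tau}} - Y_{\overline{\tau}}| < \delta_0/2$, combining them as in Theorem \ref{bdryreg} to arrive at the claimed $\delta^{\sigma/2}$ bound.

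The main obstacle is the coupling inequality for the random-walk step, i.e.\ the oblique analogue of \eqref{rwes}. In the normal case this used the rotational invariance of $B_\eps$ via the synchronous coupling $Y_{k+1} = X_{k+1} + (Y_k - X_k)$; since ellipsoids are not rotationally invariant and their orientation depends on the position through $\beta$, I instead couple the noise in $E_\eps^{\beta,X_k}$ and $E_\eps^{\beta,Y_k}$ through the linear map $P_{Y_k} P_{X_k}^{-1}$, which by the $C^1$ regularity of $\beta$ and $\partial\Omega$ is an $O(|X_k - Y_k|)$ perturbation of the identity at scale $\eps$. The resulting discrepancy between the two noise displacements is $O(\eps|X_k - Y_k|)$ and occurs with positive probability only inside $\Gamma_\eps$, so its contribution to $\mathbb{E}[|X_{k+1} - Y_{k+1}|^2 \mid \mathcal{F}_k]$ is $O(\eps|X_k - Y_k|(s_\eps(X_k) + s_\eps(Y_k)))$; after Young's inequality this is absorbed into $|X_k - Y_k|^2(1 + C(s_\eps(X_k) + s_\eps(Y_k))) + C\eps(s_\eps(X_k) + s_\eps(Y_k))$. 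Once this anisotropic cancellation is verified, the remainder of the argument of Theorem \ref{bdryreg} carries over verbatim.
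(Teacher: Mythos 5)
Your overall scheme (decomposition \eqref{esdecomp}, martingale $M_k$, supermartingale for $|X_k-Y_k|^2$) agrees with the paper, and your identification of the two ingredients that change in the oblique case (the mean-value comparison and the random-walk coupling) is correct. There are, however, two concrete gaps.

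First, in your version of Lemma \ref{esttausob} you apply the optional stopping theorem to $Q_k=|X_k-Z^\rho(X_k)|^{-n}-C_0(k\eps^2+S_k^{\eps,x_0})$, i.e.\ with $q=n$. But you yourself note that the positivity of the second-order term now requires $q$ to exceed a threshold depending on $\zeta_0$ (the paper computes it to be $q>(\zeta_0^{-2}+n-1)(\zeta_0^{-2}+1)-2$, coming from the eigenvalue estimate for $V_{\beta}^{-1}$). The exponent $q=n$ does not meet this threshold for $\zeta_0<1$ (indeed not even for $\zeta_0=1$ with that particular sufficient bound), so $Q_k$ fails to be a submartingale. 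You need to take $q$ above the $\zeta_0$-dependent threshold, e.g.\ $q=(\zeta_0^{-2}+n-1)(\zeta_0^{-2}+1)$ as in the paper; the constants $\overline r^{-n-1}$ in your stopping-time estimate should be adjusted accordingly.

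Second, your treatment of the random-walk coupling genuinely diverges from the paper's and is not fully justified. You couple the noise through $P_{Y_k}P_{X_k}^{-1}$ and assert that the resulting discrepancy contributes $O(\eps|X_k-Y_k|(s_\eps(X_k)+s_\eps(Y_k)))$ to $\mathbb{E}[|X_{k+1}-Y_{k+1}|^2\,|\,\mathcal{F}_k]$; the provenance of the $s_\eps$ factor is never explained. The cross term $2\langle X_k-Y_k,\ \mathrm{noise}_X-\mathrm{noise}_Y\rangle$ a priori gives $O(\eps|X_k-Y_k|^2)$, which cannot be absorbed into $C|X_k-Y_k|^2(s_\eps(X_k)+s_\eps(Y_k))$ in general, since $s_\eps$ can be much smaller than $\eps$ when $\dist(\cdot,\partial\Omega)$ is close to $\eps$. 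What rescues the estimate --- and what the paper exploits explicitly --- is the structural assumption $\beta(x)=\mathbf n(\pi_{\partial\Omega}x)$ on $\Gamma_\eps\setminus\Gamma_{\eps/2}$, so that the ellipsoids are Euclidean balls there and any shape mismatch occurs only when the token lies in $\Gamma_{\eps/2}$, where $s_\eps\ge C(n)\eps$. The paper therefore splits into the cases $\min\dist\ge\eps$ (exact cancellation), $\eps/2\le\min\dist<\eps$ (balls, standard cancellation argument), and $\min\dist<\eps/2$ (crude a.s.\ bound $C(\zeta_0)\eps^2\le C(\zeta_0,n)\eps(s_\eps(X_k)+s_\eps(Y_k))$), combined with the identity \eqref{mgbd} and the first-moment estimate from \cite[Lemma 7.1]{MR4684385}. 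Your linear-map coupling could perhaps be made to work, but you would still need to invoke the $\beta=\mathbf n$ assumption on $\Gamma_\eps\setminus\Gamma_{\eps/2}$ to localize the discrepancy to $\Gamma_{\eps/2}$ and thereby make the $s_\eps\gtrsim\eps$ lower bound available; as written, this step is a gap.
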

\begin{proof}
We follow the proof of Theorem \ref{bdryreg}.
We maintain the notations $Z_{k}^{\eps, \overline{r}, x_{0}}, Z_{k}^{\eps, \overline{r}, y_{0}}$ and $\overline{\tau}$ in Section 2.
 
We set   
\begin{align*} M_{k}=u_{\eps}(X_{k})&\prod_{i=1}^{k}\big( 1- (1-\alpha)\gamma s_{\eps}(X_{i-1}) \big)
\\&+(1-\alpha)\gamma 
\sum_{j=1}^{k-1}\bigg( s_{\eps}(X_{j})G(X_{j})\prod_{i=1}^{j}\big( 1- (1-\alpha)\gamma s_{\eps}(X_{j-1})) \bigg)
\end{align*}
and check that $M_{k}$ is a martingale from a similar observation in the previous section
(we use again a Lipschitz extension of $G$ to $\Omega$ with $G\equiv0$ outside $\Gamma_{\eps}$).
We also obtain
\begin{align*}
&u_{\eps}(x_{0})-\sup_{S_{I}}\inf_{S_{II}}\mathbb{E}_{S_{I},S_{II}}^{x_{0}}[u_{\eps}(X_{\overline{\tau}})]
  \le  2\gamma ||G||_{L^{\infty}(\Gamma_{\eps})} \overline{r}^{-n-1}C\delta^{1/2} ,
\end{align*} 
\begin{align*}
&u_{\eps}(y_{0})-\sup_{S_{I}}\inf_{S_{II}}\mathbb{E}_{S_{I},S_{II}}^{y_{0}}[u_{\eps}(Y_{\overline{\tau}})]
  \le  2\gamma ||G||_{L^{\infty}(\Gamma_{\eps})} \overline{r}^{-n-1}C\delta^{1/2} 
\end{align*} 
by using a similar calculation as before.

Again, we deal with 
$$ \sup_{S_{I}^{x}}\inf_{S_{II}^{x}}\inf_{S_{I}^{y}}\sup_{S_{II}^{y}}
\mathbb{E}_{S_{I}^{x},S_{II}^{x},S_{I}^{y},S_{II}^{y}}^{(x_{0},y_{0})}[|u_{\eps}(X_{\overline{\tau}})-u_{\eps}(Y_{\overline{\tau}})|] $$
with the decomposition \eqref{decomp}
to get an estimate for
$$\big|\sup_{S_{I}}\inf_{S_{II}}\mathbb{E}_{S_{I},S_{II}}^{x_{0}}[u_{\eps}(X_{\overline{\tau}})] -\sup_{S_{I}}\inf_{S_{II}}\mathbb{E}_{S_{I},S_{II}}^{y_{0}}[u_{\eps}(Y_{\overline{\tau}})] \big|. $$

To this end, we first estimate $$\sup_{S_{I}^{x}}\inf_{S_{II}^{x}}\inf_{S_{I}^{y}}\sup_{S_{II}^{y}}\mathbb{E}_{S_{I}^{x},S_{II}^{x},S_{I}^{y},S_{II}^{y}}^{(x_{0},y_{0})}[|X_{\overline{\tau}}-Y_{\overline{\tau}}|].$$
We only consider the case  $$\min\{\dist(X_{k},\partial \Omega),\dist(Y_{k},\partial \Omega)\}<\eps,$$ since the other case was already covered in the proof of Theorem \ref{bdryreg}. 
We aim to show the inequality \eqref{expbd}
for some universal $C>0$. 
By a similar argument in the proof of Theorem \ref{bdryreg}, we can get the following estimate
\begin{align*}
& \sup_{S_{I}^{x}}\inf_{S_{II}^{x}}\inf_{S_{I}^{y}}\sup_{S_{II}^{y}}
\mathbb{E}_{S_{I}^{x},S_{II}^{x},S_{I}^{y},S_{II}^{y}}^{(x_{0},y_{0})}\big[|X_{k+1}-Y_{k+1}|^{2}\big|\mathcal{F}_{k}\big]
\\ &\le \frac{\alpha}{2}\big(  |X_{k}-Y_{k}|+\eps (d'_{X_{k}}-d'_{Y_{k}})\big)^{2}
+\frac{\alpha}{2}\big(  |X_{k}-Y_{k}|+\eps (d'_{Y_{k}}-d'_{X_{k}})\big)^{2}
\\ & \quad + \mathbb{E}[|X_{k+1}-Y_{k+1}|^{2}\mathds{1}_{ \{\frac{\alpha}{2}  < \xi_{k} <  1-\frac{\alpha}{2}\}}|\mathcal{F}_{k}\}].
\end{align*}
To estimate the last term in the above inequality, we first observe that
\begin{align}\label{mgbd}\begin{split}
\mathbb{E}[|X_{k+1}-Y_{k+1}|^{2}|\mathcal{F}_{k}]&=
\mathbb{E}[|(X_{k+1}-Y_{k+1})-(X_{k}-Y_{k})|^{2}|\mathcal{F}_{k}]-|X_{k}-Y_{k}|^{2}
\\ & \qquad   + 2 \langle \mathbb{E}[X_{k+1}-Y_{k+1}|\mathcal{F}_{k}],X_{k}-Y_{k} \rangle.
\end{split}
\end{align} 
Through a geometric observation in \cite[Lemma 7.1]{MR4684385}, we get the following estimate: 
\begin{align*}
&  2 \langle \mathbb{E}[X_{k+1}-Y_{k+1}|\mathcal{F}_{k}],X_{k}-Y_{k} \rangle-|X_{k}-Y_{k}|^{2}
\\& \le |X_{k}-Y_{k}|^{2} + C(|X_{k}-Y_{k}|^{2}+\eps)(s_{\eps}(X_{k})+s_{\eps}(Y_{k})).
\end{align*}
Next, we have to obtain an estimate for 
$$\mathbb{E}[|(X_{k+1}-Y_{k+1})-(X_{k}-Y_{k})|^{2}|\mathcal{F}_{k}] .$$
If $\min\{\dist(X_{k}(\omega),\Omega),\dist(Y_{k}(\omega),\Omega)\}\ge\eps$, we have $$l_{k+1}^{\eps}(\omega,X_{k})= l_{k+1}^{\eps}(\omega,Y_{k}) $$ and thus the term can be cancelled.
When $\min\{\dist(X_{k}(\omega),\Omega),\dist(Y_{k}(\omega))\}\ge\eps/2$,
we see that
\begin{align*}
\mathbb{E}[|(X_{k+1}-Y_{k+1})-(X_{k}-Y_{k})|^{2}|\mathcal{F}_{k}]&
\le \mathbb{E}[|2\eps \cdot \mathds{1}_{\{l_{k+1}^{\eps}(\omega,X_{k})\neq l_{k+1}^{\eps}(\omega,Y_{k})\}}|^{2}|\mathcal{F}_{k}]
\\ & \le C\eps^{2} \bigg(\frac{s_{\eps}(X_{k})}{\eps}+\frac{s_{\eps}(Y_{k})}{\eps}\bigg)
\\ & = C\eps (s_{\eps}(X_{k})+s_{\eps}(Y_{k})),
\end{align*}
because we assumed $\beta(x)=\mathbf{n}(\pi_{\partial \Omega}x)$ for any $x \in  \Gamma_{\eps}\backslash \Gamma_{\eps/2}$.
Now we suppose that $\min\{\dist(X_{k}(\omega),\Omega),\dist(Y_{k}(\omega))\}<\eps/2$.
Without loss of generality, we assume that $ \dist(X_{k}(\omega),\Omega)<\eps/2$.
Then, we have
\begin{align*}
\mathbb{E}[|(X_{k+1}-Y_{k+1})-(X_{k}-Y_{k})|^{2}|\mathcal{F}_{k}]&
\le \mathbb{E}[|C(\zeta_{0})\eps \cdot \mathds{1}_{\{l_{k+1}^{\eps}(\omega,X_{k})\neq l_{k+1}^{\eps}(\omega,Y_{k})\}}|^{2}|\mathcal{F}_{k}]
\\ & \le C(\zeta_{0})\eps^{2} \le C(\zeta_{0},n)\eps(s_{\eps}(X_{k})+s_{\eps}(Y_{k})),
\end{align*}
since $ s_{\eps}(X_{k})\ge C(n)\eps$ for some universal $C(n)>0$.
Consequently,
\begin{align*}
\mathbb{E}[|(X_{k+1}-Y_{k+1})-(X_{k}-Y_{k})|^{2}|\mathcal{F}_{k}]
\le C \eps(s_{\eps}(X_{k})+s_{\eps}(Y_{k}))
\end{align*}
for some universal $C>0$, and this implies \eqref{expbd}.

From the above observation, we construct a sequence of random variables $\{ Q_{k}\}_{k=0}$ to be
$$ Q_{k}=|X_{k}-Y_{k}|^{2} e^{-C(S_{k}^{x_{0}}+S_{k}^{y_{0}})}-C\eps (S_{k}^{x_{0}}+S_{k}^{y_{0}}),$$
and we can show that $Q_{k}$ is a supermartingale.
The rest of the proof runs as in the proof of Theorem \ref{bdryreg}, by Lemma \ref{mcsob} and \ref{esttausob}.
\end{proof}
 
The following lemmas are counterparts of Lemma \ref{mcs} and Lemma \ref{esttaus}, which are needed to get the boundary estimates.
 
\begin{lemma}  \label{mcsob}
Let $\Omega$ be a domain satisfying the interior ball condition with the radius $\rho>0$
and $\beta \in C^{1}(\Gamma_{\rho})$.
Fix $r \in (0,\frac{\rho}{2})$ and $x_{0}\in \Gamma_{r}$.
Then there exists a constant $C_{0}>0$ depending on $n, q, \rho,\alpha$ and $\zeta_{0}$ such that for any small
$\eps>0$,
\begin{align} \label{lpmo} \begin{split}
\frac{\alpha}{2}\big(|x_{0}-y_{0}|-\eps d'_{x_{0}} \big)^{-q}&\hspace{-0.5em} + \frac{\alpha}{2}\big(|x_{0}-y_{0}|+\eps d'_{x_{0}} \big)^{-q}\hspace{-0.5em} + (1-\alpha)
\kint_{E_{\eps}^{ \beta ,x_{0}}(x_{0})\cap \Omega}\hspace{-1em}|z-y_{0}|^{-q}dz \\ & \ge |x_{0}-y_{0}|^{-q}+C_{0}(s_{\eps}(x_{0})+\eps^{2}), \end{split}
\end{align}
where $q>(\zeta_{0}^{-2}+n-1 )(\zeta_{0}^{-2}+1 )-2$ and $y_{0}=Z^{\rho}(x_{0}) $. 
\end{lemma}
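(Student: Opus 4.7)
The proof will parallel Lemma \ref{mcs}; the only novel ingredient is the second-order computation on the ellipsoidal integration domain. Set $\phi(z):=|z-y_{0}|^{-q}$; since $|x_{0}-y_{0}|\ge \rho-r>\rho/2$ by the interior ball condition, $\phi$ is smooth near $x_{0}$. A key geometric observation is that $y_{0}=Z^{\rho}(x_{0})$ lies on the inward normal to $\partial\Omega$ through $\pi_{\partial\Omega}x_{0}$, so
\begin{align*}
v_{0}:=\frac{x_{0}-y_{0}}{|x_{0}-y_{0}|}=\mathbf{n}(\pi_{\partial\Omega}x_{0}).
\end{align*}
By the convexity of $t\mapsto t^{-q}$ on $(0,\infty)$, the two tug-of-war terms contribute at least $\alpha |x_{0}-y_{0}|^{-q}$; together with the zeroth-order piece of the integral this absorbs the right-hand side's leading term $|x_{0}-y_{0}|^{-q}$, reducing matters to showing that the first- and second-order pieces of the integral yield at least $C_{0}(s_{\eps}(x_{0})+\eps^{2})$.

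For those two pieces I would Taylor expand $\phi$ around $x_{0}$ and integrate termwise. The first-order contribution uses Proposition \ref{besob}(ii) to write
\begin{align*}
\bigg\langle D\phi(x_{0}),\kint_{E_{\eps}^{\beta,x_{0}}(x_{0})\cap\Omega}(z-x_{0})\,dz\bigg\rangle
= q\,s_{\eps}(x_{0})|x_{0}-y_{0}|^{-q-1}\langle v_{0},\beta(x_{0})\rangle+O(\eps s_{\eps}(x_{0})),
\end{align*}
and since $\langle\mathbf{n},\beta\rangle=\overline{\beta}_{n}=1$ with $|x_{0}-y_{0}|$ bounded away from $0$ and $\rho$, this is at least $C_{1}s_{\eps}(x_{0})$ for some $C_{1}=C_{1}(q,\rho)>0$. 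For the second-order term the substitution $z-x_{0}=P_{x_{0}}\mathcal{T}_{\overline{\beta}(x_{0})}w$ with $w$ uniform in $B_{\eps}$, together with $\det\mathcal{T}_{\overline{\beta}}=1$, gives
\begin{align*}
\kint_{E_{\eps}^{\beta,x_{0}}(x_{0})}(z-x_{0})\otimes(z-x_{0})\,dz=\frac{\eps^{2}}{n+2}P_{x_{0}}V_{\overline{\beta}(x_{0})}^{-1}P_{x_{0}}^{\top},
\end{align*}
and the cut-off error from restricting to $\Omega$ is $O(\eps s_{\eps}(x_{0}))$ by Proposition \ref{besob}(i). Using $P_{x_{0}}^{\top}v_{0}=\mathbf{e}_{n}$, the second-order contribution reduces to
\begin{align*}
\frac{q\,\eps^{2}}{(n+2)|x_{0}-y_{0}|^{q+2}}\Big[(q+2)\langle V_{\overline{\beta}(x_{0})}^{-1}\mathbf{e}_{n},\mathbf{e}_{n}\rangle-\Tr V_{\overline{\beta}(x_{0})}^{-1}\Big]+O(\eps s_{\eps}(x_{0})).
\end{align*}

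The hard part is a linear-algebra bound showing that this bracket is uniformly positive. I would exploit $V_{\overline{\beta}}^{-1}=\mathcal{T}_{\overline{\beta}}\mathcal{T}_{\overline{\beta}}^{\top}$: on the $(n-2)$-dimensional subspace orthogonal to both $\overline{\beta}'$ and $\mathbf{e}_{n}$ it acts as the identity, while its restriction to $\mathrm{span}(\overline{\beta}',\mathbf{e}_{n})$ has characteristic polynomial $\lambda^{2}-(2+|\overline{\beta}'|^{2})\lambda+1$, whose roots satisfy $\lambda_{+}\lambda_{-}=1$ and hence $\lambda_{\min}(V_{\overline{\beta}}^{-1})\ge 1/(2+|\overline{\beta}'|^{2})$. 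The angle constraint $|\overline{\beta}_{n}|/|\overline{\beta}|\ge\zeta_{0}$ combined with $\overline{\beta}_{n}=1$ yields $|\overline{\beta}'|^{2}\le\zeta_{0}^{-2}-1$, so $\lambda_{\min}(V_{\overline{\beta}}^{-1})\ge 1/(1+\zeta_{0}^{-2})$ and $\Tr V_{\overline{\beta}}^{-1}=n+|\overline{\beta}'|^{2}\le n-1+\zeta_{0}^{-2}$. Therefore the bracket is bounded below by
\begin{align*}
\frac{q+2}{1+\zeta_{0}^{-2}}-(n-1+\zeta_{0}^{-2}),
\end{align*}
which is strictly positive precisely under the hypothesis $q>(\zeta_{0}^{-2}+n-1)(\zeta_{0}^{-2}+1)-2$. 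Choosing $C_{0}$ small enough (depending on $n,q,\rho,\alpha,\zeta_{0}$) to absorb the $O(\eps s_{\eps}(x_{0}))$ errors into the two positive contributions of orders $s_{\eps}(x_{0})$ and $\eps^{2}$ then yields \eqref{lpmo}.
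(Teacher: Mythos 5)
Your proof is correct and takes essentially the same route as the paper: Taylor-expand $\phi(z)=|z-y_{0}|^{-q}$, reuse the convexity bound from Lemma~\ref{mcs} for the tug-of-war pair, invoke Proposition~\ref{besob} for the first-order ellipsoidal moment, and lower-bound the second-order form via $\lambda_{\min}(V_{\overline{\beta}}^{-1})\ge (2+|\overline{\beta}'|^{2})^{-1}$ together with $\Tr V_{\overline{\beta}}^{-1}=n+|\overline{\beta}'|^{2}$, arriving at the paper's threshold on $q$. Your write-up is a bit more explicit than the paper about tracking the rotation $P_{x_{0}}$ (so the relevant quadratic form becomes $\langle V_{\overline{\beta}}^{-1}\mathbf{e}_{n},\mathbf{e}_{n}\rangle$) and about why $\langle v_{0},\beta(x_{0})\rangle=\overline{\beta}_{n}=1$, but these are clarifications rather than a different argument.
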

\begin{proof}
Since we already have \eqref{mrge}, it is sufficient to estimate 
\begin{align}
\kint_{E_{\eps}^{ \beta ,x_{0}}(x_{0})\cap \Omega}\langle D\phi(x_{0}), z-x_{0} \rangle dz
\end{align}
for the function $\phi(z)=|z-y_{0}|^{-q}$.
Observe that
\begin{align}\label{comb2}\begin{split}
&\kint_{E_{\eps}^{ \beta ,x_{0}}(x_{0})\cap \Omega}\langle D\phi(x_{0}), z-x_{0} \rangle dz
\\ & = -q|x_{0}-y_{0}|^{-q-2} \bigg\langle x_{0}-y_{0}
, \kint_{E_{\eps}^{ \beta ,x_{0}}(x_{0})\cap \Omega} (z-x_{0} ) dz \bigg\rangle 
\\ & = q|x_{0}-y_{0}|^{-q-1} \bigg\langle   \frac{x_{0}-y_{0}}{|x_{0}-y_{0}|},\beta(x_{0})  \bigg\rangle s_{\eps}(x_{0}) + O(\eps s_{\eps}(x_{0}))
\ge C_{1} s_{\eps}(x_{0}),
\end{split}
\end{align}
and
\begin{align*}
&\kint_{E_{\eps}^{ \beta ,x_{0}}(x_{0}) \cap \Omega} \langle D^{2}\phi (x_{0})(z-x_{0}),z-x_{0} \rangle  dz
\\ & =q|x_{0}-y_{0}|^{-q-4} \times \\ & \quad \bigg\langle (q+2)(x_{0}-y_{0})\otimes
(x_{0}-y_{0})-|x_{0}-y_{0}|^{2}I_{n} : \kint_{E_{\eps}^{ \beta ,x_{0}}(x_{0})\cap \Omega} \hspace{-1em} (z-x_{0})\otimes(z-x_{0} )dz\bigg\rangle
\\ & = q|x_{0}-y_{0}|^{-q-4}  \bigg\langle  (q+2)(x_{0}-y_{0})\otimes
(x_{0}-y_{0})-|x_{0}-y_{0}|^{2}I_{n} :  \frac{1}{n+2}V_{\beta(x_{0})}^{-1}\bigg\rangle  \eps^{2} \\ & \quad + O(\eps s_{\eps}(x_{0}))
\\ & = \frac{q|x_{0}-y_{0}|^{-q-4}}{n+2}\bigg( (q+2)\langle V_{\beta(x_{0})}^{-1}(x_{0}-y_{0}),
x_{0}-y_{0}\rangle-\Tr (V_{\beta(x_{0})}^{-1}) |x_{0}-y_{0}|^{2}\bigg)\eps^{2}\\ & \quad + O(\eps s_{\eps}(x_{0})) .
\end{align*}
Since $V_{\beta(x_{0})}^{-1} =(\tilde{a}_{ij})$ is given by 
\begin{equation}a_{ij} =
\left\{ \begin{array}{llll}
|\beta'(x_{0})|^{2}+1 & \textrm{if $i=j\neq n  $,}\\
1& \textrm{if $ i=j=n$,}\\
\beta_{i}(x_{0}) & \textrm{if $ i\neq n, j=n $ or $i=n, j\neq n $}\\
0 & \textrm{otherwise,}\\
\end{array} \right.
\end{equation}
we can calculate that eigenvalues of $V_{\beta(x_{0})}^{-1} $ are $$1 \ \textrm{(multiplicity $n-2$)}, 
\ \frac{|\beta'(x_{0})|^{2}+2 \pm \sqrt{(|\beta'(x_{0})|^{2}+2 )^{2}-4}}{2}$$ and
$\Tr (V_{\beta(x_{0})}^{-1}) =|\beta'(x_{0})|^{2}+n $.
Then, by using
$$\frac{|\beta'(x_{0})|^{2}+2 - \sqrt{(|\beta'(x_{0})|^{2}+2 )^{2}-4}}{2} \ge
\frac{1}{|\beta'(x_{0})|^{2}+2 } ,$$
we have
\begin{align*}
 &(q+2)\langle V_{\beta(x_{0})}^{-1}(x_{0}-y_{0}),
 x_{0}-y_{0}\rangle-\Tr (V_{\beta(x_{0})}^{-1} |x_{0}-y_{0}|^{2}) \\ &
\ge \bigg(\frac{q+2}{|\beta'(x_{0})|^{2}+2}- (|\beta'(x_{0})|^{2}+n )\bigg)|x_{0}-y_{0}|^{2}.
\end{align*}
Recall that 
$|\beta'(x_{0})|^{2}<\frac{1}{\zeta_{0}^{2}} -1.$
We get $\frac{q+2}{|\beta'(x_{0})|^{2}+2}- (|\beta'(x_{0})|^{2}+n )>0 $
if $$q>\bigg(\frac{1}{\zeta_{0}^{2}}+n-1 \bigg)\bigg(\frac{1}{\zeta_{0}^{2}}+1 \bigg)-2 .$$
Hence, for some $C_{1}>0$, we obtain that
\begin{align}\label{comb3}
&\kint_{E_{\eps}^{ \beta ,x_{0}}(x_{0}) \cap \Omega} \langle D^{2}\phi (x_{0})(z-x_{0}),z-x_{0} \rangle  dz \ge C_{1}\eps^{2}.
\end{align}

Combining \eqref{mrge}, \eqref{comb2} with \eqref{comb3}, we get the desired result.
\end{proof}

The following lemma can be proved by using a similar argument of the proof of Lemma \ref{esttaus}
with the following submartingale
$$ |X_{k}-Z^{\rho}(X_{k})|^{-(\zeta_{0}^{-2}+n-1 )(\zeta_{0}^{-2}+1 )}-C_{0}(k\eps^{2}+S_{k}^{\eps,x_{0}})  .$$
\begin{lemma}\label{esttausob}Let $\overline{r}<r $ with $r$ as above.
Assume that $|x_{0}-Z^{\rho}(x_{0})  |>\rho-h$ for $h \in (0, \frac{\overline{r}}{2}-\eps)$ and we fix the strategy $S_{II}^{\ast}$ to pull toward $Z_{\rho}(X_{k})$ for Player II.
Then for every small $\eps>0$, $x_{0}\in \overline{\Omega}$, we have
\begin{align}
\sup_{S_{I}}\mathbb{E}_{S_{I},S_{II}^{\ast}}^{x_{0}}[\eps^{2}\overline{\tau}^{\eps,\rho,h,x_{0}}+S_{\overline{\tau}^{\eps,\rho,h,x_{0}}}^{\eps , x_{0}}] \le C\overline{r}^{-(\zeta_{0}^{-2}+n-1 )(\zeta_{0}^{-2}+1 )-1}(h+\eps)
\end{align}
for some constant $C$ depending on $n, \alpha, \rho $ and $\zeta_{0}$.
\end{lemma}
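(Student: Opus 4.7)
The plan is to follow the proof of Lemma \ref{esttaus} line by line, replacing the exponent $n$ by $q := (\zeta_{0}^{-2}+n-1)(\zeta_{0}^{-2}+1)$ and invoking Lemma \ref{mcsob} in place of Lemma \ref{mcs}. For brevity write $\overline{\tau} = \overline{\tau}^{\eps,\rho,h,x_{0}}$, and define the sequence
\begin{align*}
Q_{k} = |X_{k}-Z^{\rho}(X_{k})|^{-q} - C_{0}\bigl(k\eps^{2}+S_{k}^{\eps,x_{0}}\bigr),
\end{align*}
where $C_{0}$ is the constant from Lemma \ref{mcsob}. Note that $q$ satisfies the strict inequality $q > (\zeta_{0}^{-2}+n-1)(\zeta_{0}^{-2}+1)-2$ required to invoke that lemma.

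I would first verify that $\{Q_{k}\}_{k=0}^{\overline{\tau}}$ is a submartingale with respect to $\{\mathcal{F}_{k}\}$ for any strategy $S_{I}$ of Player I, when Player II plays $S_{II}^{\ast}$. On the tug-of-war rounds, the interior ball condition (exactly as in Lemma \ref{esttaus}) gives
\begin{align*}
\bigl|X_{k}-\eps d'_{X_{k}}\tfrac{X_{k}-Z^{\rho}(X_{k})}{|X_{k}-Z^{\rho}(X_{k})|}\bigr|=|X_{k}-Z^{\rho}(X_{k})|-\eps d'_{X_{k}},\\
\sup_{\nu\in B_{1}}\bigl|X_{k}+\eps d'_{X_{k}}\nu - Z^{\rho}(X_{k}+\eps d'_{X_{k}}\nu)\bigr|= |X_{k}-Z^{\rho}(X_{k})|+\eps d'_{X_{k}},
\end{align*}
and for the noise round, since $y \in E_{\eps}^{\beta,X_{k}}(X_{k})\cap \Omega$ implies $|y-Z^{\rho}(X_{k})| \ge \rho - \dist(y,\partial\Omega)$ and $Z^\rho$ is defined identically as in the normal case, the argument reduces to estimating the three terms appearing on the left-hand side of \eqref{lpmo}. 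Applying Lemma \ref{mcsob} with $y_{0}=Z^{\rho}(X_{k})$ then yields
\begin{align*}
\mathbb{E}_{S_{I},S_{II}^{\ast}}^{x_{0}}\bigl[|X_{k+1}-Z^{\rho}(X_{k+1})|^{-q}\bigm|\mathcal{F}_{k}\bigr] \ge |X_{k}-Z^{\rho}(X_{k})|^{-q}+C_{0}\bigl(\eps^{2}+s_{\eps}(X_{k})\bigr),
\end{align*}
which gives the submartingale property of $Q_{k}$.

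Next I would apply the optional stopping theorem to $\overline{\tau}\wedge k$. The boundedness $|Q_{k\wedge \overline{\tau}}| \le (\overline{r}-h-\eps)^{-q}$ justifies its use, yielding
\begin{align*}
\overline{r}^{-q} \le \mathbb{E}_{S_{I},S_{II}^{\ast}}^{x_{0}}[Q_{0}] &\le \mathbb{E}_{S_{I},S_{II}^{\ast}}^{x_{0}}[Q_{\overline{\tau}\wedge k}] \\
&\le (\overline{r}-h-\eps)^{-q} - C_{0}\mathbb{E}_{S_{I},S_{II}^{\ast}}^{x_{0}}\bigl[(\overline{\tau}\wedge k)\eps^{2}+S_{\overline{\tau}\wedge k}^{\eps,x_{0}}\bigr].
\end{align*}
Rearranging and using a first-order Taylor estimate $(\overline{r}-h-\eps)^{-q}-\overline{r}^{-q} \le C\overline{r}^{-q-1}(h+\eps)$ (valid since $h+\eps < \overline{r}/2$), then taking the supremum over $S_{I}$ and sending $k \to \infty$ via monotone convergence, gives the claimed bound with exponent $-q-1=-(\zeta_{0}^{-2}+n-1)(\zeta_{0}^{-2}+1)-1$.

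The only conceptual point where the ellipsoidal setting enters is the random-walk step, and that is exactly the content of Lemma \ref{mcsob}; so the only obstacle is checking that the constant $C_{0}$ produced there is independent of $x_{0}$ in $I_{r}$ (which it is, depending only on $n,q,\rho,\alpha,\zeta_{0}$). Everything else in the argument of Lemma \ref{esttaus} transfers unchanged, since the tug-of-war at shrinking radius $\eps d'_{X_{k}}$ and the definition of $Z^{\rho}$ are identical in the two settings.
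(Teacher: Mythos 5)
Your proof is correct and is essentially exactly the argument the paper intends: the paper itself only gives a one-line sketch, stating that the lemma follows from the proof of Lemma~\ref{esttaus} using the submartingale $|X_{k}-Z^{\rho}(X_{k})|^{-(\zeta_{0}^{-2}+n-1)(\zeta_{0}^{-2}+1)}-C_{0}(k\eps^{2}+S_{k}^{\eps,x_{0}})$, which is precisely the $Q_{k}$ you construct. Your filling in of the details — checking $q>(\zeta_{0}^{-2}+n-1)(\zeta_{0}^{-2}+1)-2$ so that Lemma~\ref{mcsob} applies, reusing the interior-ball identities for the tug-of-war rounds, invoking Lemma~\ref{mcsob} for the ellipsoidal noise round, and applying optional stopping to $\overline{\tau}\wedge k$ exactly as in Lemma~\ref{esttaus} — matches the paper's intended route.
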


Since we already have the interior estimate (Theorem \ref{intlip}) and the boundary estimate (Theorem \ref{bdryregob}), we can discuss the convergence of the value function satisfying \eqref{dppobpgen}, which corresponds to Theorem \ref{convrob} for \eqref{probge}. We omit the detail of the proof, but it can be shown by a similar argument in the proof of Theorem \ref{convrob} with the following observation 
\begin{align}\label{testo1}\begin{split} &  \kint_{E_{\eps}^{ \beta ,x}(x)\cap \Omega}\phi(y)dy  
=    \phi(x) -s_{\eps}(x) \langle D\phi(x), \beta(x) \rangle  + O(\eps s_{\eps}(x)  )\end{split}
\end{align} 
for any $x \in \overline{\Omega}$ and $\phi \in C^{2}(\overline{\Omega})$,
which can be derived from Proposition \ref{besob}.
 
\begin{theorem} \label{convob}
Let $u_{\eps}$ be the function satisfying \eqref{dppobpgen} with $\beta \in C(\Gamma_{\eps})$ and $G \in C^{1}(\Gamma_{\eps})$ for each $\eps >0$.
Assume that $\beta(x)=\mathbf{n}(\pi_{\partial \Omega}x)$ for any $x \in  \Gamma_{\eps}\backslash \Gamma_{\eps/2}$
and $\big|\langle\mathbf{n}(\pi_{\partial \Omega}x), \frac{\beta(x)}{|\beta(x)|}\rangle \big| \ge\zeta_{0}$ in $\Gamma_{\eps}$ for some $\zeta_{0} \in (0,1)$.
Then, there exists a function $u: \overline{\Omega} \to \mathbb{R}^{n}$ and
a subsequence $\{  \eps_{i}\}$ such that $u_{\eps_{i}}$ converges uniformly to $u$ on $\overline{\Omega}$ and
$u$ is a viscosity solution to the problem \eqref{probge}.
\end{theorem}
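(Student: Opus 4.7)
The plan is to imitate the proof of Theorem \ref{convrob} almost verbatim, with three modifications: the mean-value ball in the integrand is replaced by the ellipsoid $E^{\beta,x}_{\eps}(x)$, the outward normal $\mathbf{n}(\pi_{\partial\Omega}x)$ is replaced by $\beta(x)$, and the asymptotic identity \eqref{testo1} of Proposition \ref{besob} is used in place of the spherical identity \eqref{bdryintav}. First, I combine the interior H\"older estimate of Theorem \ref{intlip} with the boundary H\"older estimate of Theorem \ref{bdryregob} to obtain an equicontinuous and uniformly bounded family $\{u_{\eps}\}_{\eps>0}$ on $\overline{\Omega}$ (boundedness is immediate from $\|u_{\eps}\|_{L^\infty(\overline\Omega)}\le \|G\|_{L^\infty(\Gamma_{\eps})}$; the boundary estimate provides the modulus of continuity in a boundary strip of width $\delta^{1/2}$, while Theorem \ref{intlip} covers the complementary interior). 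By Arzel\`a--Ascoli there exists a subsequence $u_{\eps_i}\to u$ uniformly on $\overline\Omega$, with $u\in C(\overline\Omega)$.

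Second, I verify that $u$ is a viscosity solution in the sense of Definition \ref{viscosity}. The interior viscosity equation $\Delta_p^N u = 0$ follows by the same argument used for the interior part of Theorem \ref{convrob}: away from $\Gamma_{\eps/2}$ the assumption $\beta=\mathbf{n}$ gives $E^{\beta,x}_{\eps}(x)=B_{\eps}(x)$, so the DPP \eqref{dppobpgen} coincides with \eqref{towdpp} there and the standard Taylor expansion of the tug-of-war with noise operator supplies the correct sign of $\Delta_p^N\phi$ against every admissible test function. For the boundary condition, let $x\in\partial\Omega$ and let $\phi\in C^2(\overline\Omega)$ be such that $u-\phi$ has a strict local minimum at $x$. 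Following the reasoning of Theorem \ref{convrob} line by line, I produce near-minimizers $x_{\eps}\in B_r(x)\cap\overline\Omega$ with $x_{\eps}\to x$, set $\varphi=\phi+(u_{\eps}(x_{\eps})-\phi(x_{\eps}))$, and derive the fundamental inequality
\[
\zeta_{\eps}\ge T^{\beta,G}_{\eps}\phi(x_{\eps})-\phi(x_{\eps})+\gamma s_{\eps}(x_{\eps})\Lambda_{\eps},
\]
where $\Lambda_{\eps}=\zeta_{\eps}+\phi(x_{\eps})-u_{\eps}(x_{\eps})\to 0$ by uniform convergence. If $x_{\eps}\notin\Gamma_{\eps/2}$ for all small $\eps$, the interior argument already closes the case, so I pass to a subsequence with $x_{\eps}\in\Gamma_{\eps/2}$.

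Third, I feed the Taylor expansion of $\phi$ into $T^{\beta,G}_{\eps}\phi(x_{\eps})$. For any unit $\nu$, the tug-of-war contribution supplies
\[
\tfrac{1}{2}\phi(x_{\eps}+\eps d'_{x_{\eps}}\nu)+\tfrac{1}{2}\phi(x_{\eps}-\eps d'_{x_{\eps}}\nu)=\phi(x_{\eps})+O(\eps^2)=\phi(x_{\eps})+O(\eps s_{\eps}(x_{\eps})),
\]
where the last identity uses $s_{\eps}(x_{\eps})\gtrsim\eps$ on $\Gamma_{\eps/2}$. Picking $\nu$ so that $\phi(x_{\eps}+\eps d'_{x_{\eps}}\nu)=\inf_{B_{\eps d'_{x_{\eps}}}(x_{\eps})}\phi$ and combining with the key identity \eqref{testo1} from Proposition \ref{besob}, I obtain
\[
T^{\beta,G}_{\eps}\phi(x_{\eps})\ge \phi(x_{\eps})+\bigl(\gamma G(x_{\eps})-\gamma\phi(x_{\eps})-(1-\alpha)\langle D\phi(x_{\eps}),\beta(x_{\eps})\rangle\bigr)s_{\eps}(x_{\eps})+O(\eps s_{\eps}(x_{\eps})).
\]
With the choice $\zeta_{\eps}=O(\eps s_{\eps}(x_{\eps}))$, division by $s_{\eps}(x_{\eps})$ followed by $\eps\to 0$ (using continuity of $\beta$ and $x_{\eps}\to x$, together with the relation $\gamma=(1-\alpha)\gamma_0$ inherited from Theorem \ref{convrob}) yields the desired inequality $\langle\beta(x),D\phi(x)\rangle+\gamma_0\phi(x)\ge\gamma_0 G(x)$. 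The reverse viscosity inequality, when $u-\phi$ has a strict local maximum, is proved symmetrically by swapping $\sup$ and $\inf$ in the tug-of-war term.

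The main obstacle I anticipate is ensuring that the remainder in \eqref{testo1} is genuinely of lower order than $s_{\eps}(x_{\eps})$ as $x_{\eps}\to x\in\partial\Omega$ inside $\Gamma_{\eps/2}$, so that the division by $s_{\eps}(x_{\eps})$ in the final limit is harmless. This hinges on uniform control of the error $O(\eps s_{\eps}(x_{\eps}))$ in Proposition \ref{besob}(ii), which in turn relies on sufficient regularity of $\beta$ so that the ellipsoid $E^{\beta,x_{\eps}}_{\eps}(x_{\eps})$ deforms continuously as $x_{\eps}\to x$, together with the stabilizing assumption $\beta=\mathbf{n}$ on $\Gamma_{\eps}\setminus\Gamma_{\eps/2}$ that prevents any mismatch at the outer edge of the boundary layer. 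Once these technical points are verified, the remaining steps are direct adaptations of the corresponding ones in Theorem \ref{convrob}.
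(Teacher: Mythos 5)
Your proposal is correct and follows essentially the same route as the paper, which only sketches this proof by pointing to Theorem \ref{convrob} together with the identity \eqref{testo1} derived from Proposition \ref{besob}. Your three modifications (ellipsoid in place of ball, $\beta$ in place of $\mathbf{n}$, and \eqref{testo1} in place of the spherical mean-value expansion) are exactly what the author intends, and your use of Theorems \ref{intlip} and \ref{bdryregob} plus Arzel\`a--Ascoli for the compactness step is the right opening move.

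One small point worth tightening: when you discard the case $x_\eps\notin\Gamma_{\eps/2}$ by invoking ``the interior argument,'' note that for $x_\eps\in\Gamma_\eps\setminus\Gamma_{\eps/2}$ the DPP still carries a boundary term $\gamma s_\eps(x_\eps)>0$, so the pure interior argument does not literally apply there; rather, on that annulus $\beta=\mathbf n$ and $E^{\beta,x_\eps}_{\eps}(x_\eps)=B_\eps(x_\eps)$, so that subcase reduces to the Robin argument from Theorem \ref{convrob} (after passing, if needed, to a sub-subsequence along which $s_\eps(x_\eps)/\eps^2$ has a limit, one derives either the boundary inequality or the PDE inequality, each of which suffices for the one-sided $\min$/$\max$ form of Definition \ref{viscosity}). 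This is the same subtlety that is implicit in the proof of Theorem \ref{convrob} itself, so your treatment is no less rigorous than the paper's. With that clarification your argument is complete.
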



\end{document}